\newtheorem{theorem}{Theorem}[section]
\theoremstyle{plain}
\newtheorem{corollary}{Corollary}[section]
\newtheorem{definition}{Definition}[section]
\newtheorem{lemma}{Lemma}[section]
\newtheorem{proposition}{Proposition}[section]
\newtheorem{remark}{Remark}[section]
\numberwithin{equation}{section} \textheight  22 true cm \textwidth  15 true cm \setlength{\oddsidemargin}{0mm} \setlength{\evensidemargin}{0mm}
\begin{document}
\title[Conformal anti-invariant $\xi^\perp-$submersions]{%
CONFORMAL ANTI-INVARIANT $\xi^\perp-$SUBMERSIONS}
\author{Mehmet Akif Akyol}
\address{Bing\"{o}l University, Faculty of Arts and Sciences, Deparment of Mathematics, 12000, Bing\"{o}l,
Turkey}
\email{mehmetakifakyol@bingol.edu.tr}
\author{Y\i lmaz G\"{u}nd\"{u}zalp}
\address{Dicle University, Faculty of Arts and Sciences, Deparment of Mathematics, 21280, Diyarbak\i r, Turkey}
\email{ygunduzalp@dicle.edu.tr}
\subjclass[2010]{53C15, 53C40.}
\keywords{Almost contact metric manifold, conformal submersion, conformal anti-invariant $\xi^\perp-$submersion.}

\begin{abstract}
As a generalization of anti-invariant $\xi^\perp-$Riemannian submersions, we introduce conformal anti-invariant $\xi^\perp-$submersions from almost contact
metric manifolds onto Riemannian manifolds. We investigate the geometry of foliations which are arisen from the definition of a conformal submersion
and find necessary and sufficient conditions for a conformal anti-invariant $\xi^\perp-$submersion to be totally geodesic and harmonic, respectively.
Moreover, we show that there are certain product structures on the total space of a conformal anti-invariant $\xi^\perp-$submersion.
\end{abstract}

\maketitle

\section{Introduction}
Riemannian submersions between Riemannian manifolds were studied by O'Neill \cite{O} and Gray
\cite{Gray}, for recent developments on the geometry of Riemannian
submanifolds and Riemannian submersions, see:\cite{Chen} and \cite{FIP}, respectively. In \cite{watson}, the Riemannian submersions were considered between almost Hermitian manifolds by
Watson under the name of almost Hermitian submersions. In this case, the Riemannian submersion
is also an almost complex mapping and consequently the vertical and horizontal distribution are
invariant with respect to the almost complex structure of the total manifold of the submersion. The
study of anti-invariant Riemannian submersions from almost Hermitian manifolds were initiated by
\c{S}ahin \cite{s1}. In this case, the fibres are anti-invariant with respect to the almost complex structure
of the total manifold. Beside there are many notions related with anti-invariant
Riemannian submersion (see: \cite{Asa}, \cite{BEM}, \cite{CE}, \cite{GIP}, \cite{Gun1}, \cite{Gun2}, \cite{Gun3}, \cite{LPSS},
 \cite{Park1}, \cite{Park2}, \cite{Park3}, \cite{Park4}, \cite{Fatima-Shahid1}, \cite{s2}, \cite{s3}, \cite{Tastan1}).
In \cite{Chi4}, Chinea defined almost contact Riemannian submersions between
almost contact metric manifolds and examined the differential geometric properties of Riemannian
submersions between almost contact metric manifolds. More precisely, let $(M_1,\phi,\xi,\eta,g_1)$ and
$(M_2,\phi^{'},\xi^{'},\eta^{'},g_2)$ be almost contact manifolds with $dim M_1=2m+1$ and $dim M_2=2n+1$. A
Riemannian submersion $\pi:M_1\longrightarrow M_2$ is called the almost contact metric submersion if $\pi$ is an
almost contact mapping, i.e., $\phi^{'}\pi_*=\pi_*\phi$. An immediate consequence of the above definition is that the vertical and
horizontal distributions are $\phi$-invariant. Moreover, the characteristic vector field $\xi$ is horizontal.
We note that only $\phi$-holomorphic submersions have been considered on almost contact manifolds
\cite{Chi4}.

One the other hand, as a generalization of Riemannian submersion,
horizontally conformal submersions are defined as follows \cite{BW}: Suppose
that $(M,g_{M})$ and $(B,g_{B})$ are Riemannian manifolds and $\pi:M\longrightarrow B$ is a smooth submersion,
then $\pi$ is called a horizontally conformal submersion, if there is a positive function $\lambda$ such that
\begin{equation*}
\lambda^{2}g_{M}(X,Y)=g_{B}(\pi_{*}X,\pi_{*}Y)
\end{equation*}
for every $X,Y\in\Gamma((ker\pi_{*})^\perp).$ It is obvious that every
Riemannian submersion is a particular horizontally conformal submersion with
$\lambda=1$. We note that horizontally conformal submersions are special
horizontally conformal maps which were introduced independently by Fuglede
\cite{F} and Ishihara \cite{I}. We also note that a horizontally conformal
submersion $\pi:M\longrightarrow B$ is said to be horizontally homothetic if
the gradient of its dilation $\lambda$ is vertical, i.e.,
\begin{equation}
\mathcal{H}(grad\lambda)=0
\end{equation}
at $p\in M$, where $\mathcal{H}$ is the projection on the horizontal space
$(ker\pi_{*})^{\perp}$. For conformal submersion, see: \cite{BW}, \cite{G}, \cite{Ornea}.

As a generalization of holomorphic submersions, conformal holomorphic submersions were studied
by Gudmundsson and Wood \cite{GW}. They obtained necessary and sufficient conditions for con-
formal holomorphic submersions to be a harmonic morphism, see also \cite{Chi1}, \cite{Chi2} and \cite{Chi3} for the
harmonicity of conformal holomorphic submersions.

Recently, in \cite{As} we have introduced conformal anti-invariant submersions from almost Hermitian
manifolds onto Riemannian manifolds and investigated the geometry of such submersions. (See also:\cite{A})
We showed that the geometry of such submersions are different from anti-invariant Riemannian
submersions.  In this paper, we consider conformal anti-invariant $\xi^\perp-$submersions from an almost contact
metric manifold under the assumption that the fibers are anti-invariant with respect to the tensor field of
type $(1,1)$ of the almost contact manifold.

The paper is organized as follows. In the second section,
we gather main notions and formulas for other sections. In section 3, we introduce conformal
anti-invariant $\xi^\perp-$submersions from almost contact metric manifolds onto Riemannian manifolds,
investigates the geometry of leaves of the horizontal distribution and the vertical distribution and
find necessary and sufficient conditions for a conformal anti-invariant $\xi^\perp-$submersion to be totally
geodesic and harmonic, respectively. In section 4, we show that there are certain product structures
on the total space of a conformal anti-invariant $\xi^\perp-$submersion.

\section{Preliminaries}

In this section, we define almost contact metric manifolds, recall the notion of
(horizontally) conformal submersions between Riemannian manifolds and give a
brief review of basic facts of (horizontally) conformal submersions.

Let $(M,g_{M})$ be an almost contact metric manifold with structure tensors $(\phi,\xi,\eta,g_{M})$
where $\phi$ is a tensor field of type (1,1), $\xi$ is a vector field, $\eta$ is a 1-form and $g_{M}$ is the Riemannian metric on $M.$
Then these tensors satisfy \cite{B}
\begin{equation}
\phi\xi=0,\ \ \eta o\phi=0,\ \ \eta(\xi)=1 \label{e.q:2.1}
\end{equation}
\begin{equation}
\phi^{2}=-I+\eta\otimes\xi \ \ \ \text{and} \ \ g_{M}(\phi X,\phi Y)=g_{M}(X,Y)-\eta(X)\eta(Y),  \label{e.q:2.2}
\end{equation}
where $I$ denotes the identity endomorphism of $TM$ and $X,Y$ are any vector fields on $M$. Moreover, if $M$ is Sasakian \cite{SH}, then we have
\begin{equation}
(\nabla_{X}\phi)Y=-g_{M}(X,Y)\xi+\eta(Y)X \ \ \text{and} \ \ \nabla_{X}\xi=\phi X, \label{e.q:2.3}
\end{equation}
where $\nabla$ is the connection of Levi-Civita covariant differentiation.

Conformal submersions belong to a wide class of conformal maps that we are going to recall their
definition, but we will not study such maps in this paper.

\begin{definition}
\textit{(\cite{BW})} Let $\varphi:(M^{m},g)\longrightarrow (N^{n},h)$ be a
smooth map between Riemannian manifolds, and let $x\in M$. Then $\varphi$ is
called horizontally weakly conformal or semi conformal at $x$ if either

\begin{enumerate}
\item[(i)] $d\varphi_{x}=0$, or
\item[(ii)] $d\varphi_{x}$ maps horizontal space $\mathcal{H}%
_{x}=(ker(d\varphi_{x}))^\perp$ conformally onto $T_{\varphi_{*}}N$, i.e., $%
d\varphi_{x}$ is surjective and there exists a number $\Lambda(x)\neq0$ such
that
\begin{equation}
h(d\varphi_{x}X,d\varphi_{x}Y)=\Lambda(x)g(X,Y)\mbox{ }(X,Y\in\mathcal{H}%
_{x}).
\end{equation}
\end{enumerate}
\end{definition}
A point $x$ is of type (i) in Definition if and only if it is a critical point of $\varphi$; we shall call
a point of type (ii) a \textit{regular point}. At a critical point, $%
d\varphi_{x}$ has rank $0$; at a regular point, $d\varphi_{x}$ has rank $n$
and $\varphi$ is submersion. The number $\Lambda(x)$ is called the \textit{%
square dilation} (of $\varphi$ at $x$); it is necessarily non-negative; its
square root $\lambda(x)=\sqrt{\Lambda(x)}$ is called the dilation (of $%
\varphi$ at $x$). The map $\varphi$ is called \textit{horizontally weakly
conformal} or \textit{semi conformal} (on $M$) if it is horizontally weakly
conformal at every point of $M$. It is clear that if $\varphi$ has no
critical points, then we call it a (\textit{horizontally}) conformal
submersion.

Next, we recall the following definition from \cite{G}. Let
$\pi:M\longrightarrow N$ be a submersion. A vector field $E$ on $M$ is said
to be projectable if there exists a vector field $\check{E}$ on $N$, such
that $d\pi(E_{x})=\check{E}_{\pi(x)}$ for all $x\in M$. In this case $E$ and
$\check{E}$ are called $\pi-$ related. A horizontal vector field $Y$ on $%
(M,g)$ is called basic, if it is projectable. It is well known fact, that is
$\check{Z}$ is a vector field on $N$, then there exists a unique basic
vector field $Z$ on $M$, such that $Z$ and $\check{Z}$ are $\pi-$ related.
The vector field $Z$ is called the horizontal lift of $\check{Z}$.

The fundamental tensors of a submersion were introduced in \cite{O}. They
play a similar role to that of the second fundamental form of an immersion.
More precisely, O'Neill's tensors $T$ and $A$ defined for vector fields $E,F$
on $M$ by
\begin{equation}  \label{A}
A_{E}F=\mathcal{V}\nabla_{\mathcal{H}E}\mathcal{H}F +\mathcal{H}\nabla_{%
\mathcal{H}E}\mathcal{V}F
\end{equation}
\begin{equation}  \label{T}
T_{E}F=\mathcal{H}\nabla_{\mathcal{V}E}\mathcal{V}F +\mathcal{V}\nabla_{%
\mathcal{V}E}\mathcal{H}F
\end{equation}
where $\mathcal{V}$ and $\mathcal{H}$ are the vertical and horizontal
projections (see \cite{FIP}). On the other hand, from (\ref{A}) and (\ref{T}%
), we have
\begin{equation}  \label{nvw}
\nabla_{V}W=T_{V}W+\hat{\nabla}_{V}W
\end{equation}
\begin{equation}  \label{nvx}
\nabla_{V}X=\mathcal{H}\nabla_{V}X+T_{V}X
\end{equation}
\begin{equation}  \label{nxv}
\nabla_{X}V=A_{X}V +\mathcal{V}\nabla_{X}V
\end{equation}
\begin{equation}  \label{nxy}
\nabla_{X}Y=\mathcal{H}\nabla_{X}Y+A_{X}Y
\end{equation}
for $X,Y\in\Gamma((ker\pi_{*})^\perp)$ and $V,W\in\Gamma(ker\pi_{*})$, where $\hat{%
\nabla}_{V}W=\mathcal{V}\nabla_{V}W$. If $X$ is basic, then $\mathcal{H}%
\nabla_{V}X=A_{X}V$. It is easily seen that for $x\in M$, $X\in \mathcal{H}%
_{x}$ and $\mathcal{V}_{x}$ the linear operators $T_{V}$, $%
A_{X}:T_{X}M\longrightarrow T_{X}M$ are skew-symmetric, that is
\begin{equation*}
-g(T_{V}E,F)=g(E,T_{V}F)\text{ and }-g(A_{X}E,F)=g(E,A_{X}F)
\end{equation*}
for all $E,F\in T_{x}M$. We also see that the restriction of $T$ to the
vertical distribution $T\mid_{V\times V}$ is exactly the second fundamental
form of the fibres of $\pi$. Since $T_{V}$ skew-symmetric we get: $\pi$ has
totally geodesic fibres if and only if $T\equiv0$. For the special case when
$\pi$ is horizontally conformal we have the following:

\begin{proposition}
\textit{(\cite{G})} Let $\pi:(M^{m},g)\longrightarrow (N^{n},h)$ be a
horizontally conformal submersion with dilation $\nabla$ and $X,Y$ be
horizontal vectors, then
\begin{equation}
A_{X}Y=\frac{1}{2}\{\mathcal{V}[X,Y]-\lambda^{2}g(X,Y)grad_{\mathcal{V}}(%
\frac{1}{\lambda^{2}})\}.
\end{equation}
\end{proposition}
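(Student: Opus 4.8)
The plan is to split $A_XY$ into the parts that are antisymmetric and symmetric in the pair $(X,Y)$ and identify each of them separately. For horizontal $X,Y$ equation $(\ref{A})$ gives $A_XY=\mathcal{V}\nabla_X Y$, and since $\nabla_X(fY)=(Xf)Y+f\nabla_X Y$ with $(Xf)Y$ horizontal, the map $(X,Y)\mapsto A_XY$ is $C^\infty(M)$-bilinear on horizontal fields; consequently it is enough to prove the formula when $X$ and $Y$ are basic vector fields, $\pi$-related to $\check X,\check Y$ on $N$. Because $\nabla$ is torsion-free,
\[A_XY-A_YX=\mathcal{V}\nabla_X Y-\mathcal{V}\nabla_Y X=\mathcal{V}[X,Y],\]
which disposes of the antisymmetric part, so the whole problem reduces to computing the symmetric part $A_XY+A_YX$, a vertical vector field, by pairing it with an arbitrary $V\in\Gamma(ker\pi_*)$.

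For the symmetric part I would expand $2g(\nabla_X Y,V)$ by the Koszul formula. Since $X,Y$ are basic and $V$ is vertical one has $g(X,V)=g(Y,V)=0$, and the brackets $[X,V]$ and $[Y,V]$ are vertical, hence orthogonal to the horizontal fields $Y$ and $X$; the Koszul expansion therefore collapses to $2g(\nabla_X Y,V)=-V\bigl(g(X,Y)\bigr)+g([X,Y],V)$. Adding the analogous identity obtained by interchanging $X$ and $Y$, the bracket terms cancel and, using $g(A_XY+A_YX,V)=g(\nabla_X Y+\nabla_Y X,V)$, we get $g(A_XY+A_YX,V)=-V\bigl(g(X,Y)\bigr)$.

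Now horizontal conformality enters. Because $X,Y$ are basic, $h(\pi_*X,\pi_*Y)=h(\check X,\check Y)\circ\pi$, so the relation $\lambda^2 g(X,Y)=h(\pi_*X,\pi_*Y)$ gives $g(X,Y)=\tfrac{1}{\lambda^2}\bigl(h(\check X,\check Y)\circ\pi\bigr)$. Differentiating along $V$ and using $V(f\circ\pi)=0$ for every function $f$ on $N$ (as $\pi_*V=0$),
\[V\bigl(g(X,Y)\bigr)=\lambda^2 g(X,Y)\,V\!\left(\frac{1}{\lambda^2}\right)=\lambda^2 g(X,Y)\,g\!\left(grad_{\mathcal{V}}\frac{1}{\lambda^2},V\right),\]
the last step because $V$ is vertical. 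Hence $g\bigl(A_XY+A_YX,V\bigr)=-\lambda^2 g(X,Y)\,g\bigl(grad_{\mathcal{V}}(1/\lambda^2),V\bigr)$ for every vertical $V$, and since $A_XY+A_YX$ and $grad_{\mathcal{V}}(1/\lambda^2)$ are both vertical we conclude $A_XY+A_YX=-\lambda^2 g(X,Y)\,grad_{\mathcal{V}}(1/\lambda^2)$. Adding this to the antisymmetric identity and dividing by $2$ gives the asserted formula for basic fields, and by $C^\infty(M)$-bilinearity it extends to all horizontal $X,Y$.

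The step I expect to be the main obstacle is the Koszul/bracket computation: one must invoke that $[X,V]$ is vertical whenever $X$ is basic and $V$ vertical — this is precisely where the submersion structure, rather than merely the conformal metric relation, is used — and then check that the symmetrization genuinely annihilates the term $g([X,Y],V)$. Once the reduction to basic vector fields is set up, the rest is routine, and the formula visibly recovers O'Neill's identity $A_XY=\tfrac12\mathcal{V}[X,Y]$ when $\lambda\equiv1$.
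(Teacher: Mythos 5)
Your proof is correct. Note that the paper itself offers no argument for this proposition --- it is quoted verbatim from Gudmundsson's thesis \cite{G} --- so there is nothing to compare against; but your argument is precisely the standard one: the splitting of $A_XY=\mathcal{V}\nabla_XY$ into its antisymmetric part $\mathcal{V}[X,Y]$ (torsion-freeness) and its symmetric part, the Koszul-formula computation $2g(\nabla_XY+\nabla_YX,V)=-2V\bigl(g(X,Y)\bigr)$ using that $[X,V]$ is vertical for basic $X$, and finally the conformality relation $g(X,Y)=\lambda^{-2}\bigl(h(\check X,\check Y)\circ\pi\bigr)$ to convert $V\bigl(g(X,Y)\bigr)$ into $\lambda^{2}g(X,Y)\,g\bigl(grad_{\mathcal{V}}(1/\lambda^{2}),V\bigr)$. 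All the steps check out, and the specialization $\lambda\equiv 1$ recovering O'Neill's formula is a good sanity check. The only point worth tightening is the reduction to basic fields: you verify that the left-hand side $(X,Y)\mapsto A_XY$ is $C^{\infty}(M)$-bilinear, but to conclude that it suffices to prove the identity on basic fields you should also observe that the right-hand side is tensorial in horizontal $X,Y$ --- which it is, since $\mathcal{V}[X,fY]=f\,\mathcal{V}[X,Y]$ for horizontal $X,Y$ (the extra term $(Xf)Y$ being horizontal) and the second summand is manifestly $C^{\infty}(M)$-bilinear. This is a one-line check, not a gap.
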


We see that the skew-symmetric part of $A\mid_{(ker\pi_{*})^\perp\times(ker\pi_{*})^\perp}$
measures the obstruction integrability of the horizontal distribution $%
(ker\pi_{*})^\perp$.

Let $(M,g_{M})$ and $(N,g_{N})$ be Riemannian manifolds and suppose that $%
\pi:M \longrightarrow N$ is a smooth map between them. The
differential $\pi_{*}$ of $\pi$ can be viewed a section of the
bundle $Hom(TM,\pi^{-1}TN) \longrightarrow M$, where $\pi^{-1}TN$ is
the pullback bundle which has fibres $(\pi^{-1}TN)_{p}=T_{\pi(p)}N$,
$p\in M$. $Hom(TM,\pi^{-1}TN)$ has a connection $\nabla$ induced from
the Levi-Civita connection $\nabla^{M}$ and the pullback connection. Then
the second fundamental form of $\pi$ is given by
$$\nabla\pi_*:\Gamma(TM)\times\Gamma(TM)\longrightarrow \Gamma(TN)$$ defined by
\begin{equation}
(\nabla\pi_{*})(X,Y)=\nabla^{\pi}_{X}\pi_{*}(Y)-\pi_{*}(\nabla^{M}_{X}Y) \label{nfixy}
\end{equation}
for $X,Y\in\Gamma(TM)$, where $\nabla^{\pi}$ is the pullback connection.
It is known that the second fundamental form is symmetric.
\begin{lemma}\cite{Urakawa}
Let $(M,g_{M})$ and $(N,g_{N})$ be Riemannian manifolds and suppose that $%
\varphi:M \longrightarrow N$ is a smooth map between them. Then we have
\begin{equation}
\nabla^{\varphi}_{X}\varphi_{*}(Y)-\nabla^{\varphi}_{Y}\varphi_{*}(X)-\varphi_{*}([X,Y])=0
\end{equation}
for $X,Y\in\Gamma(TM)$.
\end{lemma}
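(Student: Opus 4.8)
The plan is to deduce the identity from two facts already at our disposal: the symmetry of the second fundamental form $\nabla\varphi_{*}$ of $\varphi$, recorded just above, and the fact that the Levi--Civita connection $\nabla^{M}$ on $M$ is torsion-free.

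First I would write down the defining relation (\ref{nfixy}) (with $\varphi$ in place of $\pi$) for both orderings of the arguments:
\begin{equation*}
(\nabla\varphi_{*})(X,Y)=\nabla^{\varphi}_{X}\varphi_{*}(Y)-\varphi_{*}(\nabla^{M}_{X}Y),\qquad
(\nabla\varphi_{*})(Y,X)=\nabla^{\varphi}_{Y}\varphi_{*}(X)-\varphi_{*}(\nabla^{M}_{Y}X).
\end{equation*}
Subtracting the second from the first and using the symmetry $(\nabla\varphi_{*})(X,Y)=(\nabla\varphi_{*})(Y,X)$, the left-hand side vanishes, so
\begin{equation*}
\nabla^{\varphi}_{X}\varphi_{*}(Y)-\nabla^{\varphi}_{Y}\varphi_{*}(X)=\varphi_{*}(\nabla^{M}_{X}Y-\nabla^{M}_{Y}X).
\end{equation*}
Then I would invoke the vanishing of the torsion of $\nabla^{M}$, namely $\nabla^{M}_{X}Y-\nabla^{M}_{Y}X=[X,Y]$, and substitute it on the right; this turns the displayed equality into $\nabla^{\varphi}_{X}\varphi_{*}(Y)-\nabla^{\varphi}_{Y}\varphi_{*}(X)-\varphi_{*}([X,Y])=0$, which is the assertion.

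The only point needing a word of justification --- and it is a routine one --- is the symmetry of $\nabla\varphi_{*}$, in case one does not wish simply to quote it. This amounts to pure bookkeeping: in local coordinates $(x^{i})$ on $M$ and $(y^{\alpha})$ on $N$ one has $\varphi_{*}(\partial_{i})=(\partial_{i}\varphi^{\alpha})\partial_{\alpha}$, and expanding $(\nabla\varphi_{*})(\partial_{i},\partial_{j})$ produces the term $\partial_{i}\partial_{j}\varphi^{\alpha}$ together with the Christoffel symbols of $M$ and the pulled-back Christoffel symbols of $N$; each of these is symmetric in $i,j$, so $(\nabla\varphi_{*})(\partial_{i},\partial_{j})=(\nabla\varphi_{*})(\partial_{j},\partial_{i})$, and tensoriality (bilinearity over $C^{\infty}(M)$) extends this to arbitrary $X,Y\in\Gamma(TM)$. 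I do not expect any genuine obstacle at any stage of the argument; the whole content is the torsion-freeness of $\nabla^{M}$ packaged together with the symmetry of the Hessian-type term $\partial_{i}\partial_{j}\varphi^{\alpha}$.
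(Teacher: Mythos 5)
Your proof is correct: the paper itself gives no argument for this lemma (it is simply quoted from Urakawa, with the symmetry of the second fundamental form recorded in the sentence immediately preceding it), and your deduction --- subtracting the two instances of the defining relation (\ref{nfixy}), invoking the symmetry $(\nabla\varphi_{*})(X,Y)=(\nabla\varphi_{*})(Y,X)$, and then using $\nabla^{M}_{X}Y-\nabla^{M}_{Y}X=[X,Y]$ --- is exactly the standard one. Your coordinate verification of the symmetry is also sound, so there is nothing to add.
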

A smooth map $\pi:(M,g_{M}) \longrightarrow (N,g_{N})$ is said to be harmonic if $%
trace(\nabla\pi_{*})=0$. On the other hand, the tension field of $\pi
$ is the section $\tau(\pi)$ of $\Gamma(\pi^{-1}TN)$ defined by
\begin{equation}
\tau(\pi)=div\pi_{*}=\sum_{i=1}^{m}(\nabla\pi_{*})(e_{i},e_{i}),
\end{equation}
where $\{e_{1},...,e_{m}\}$ is the orthonormal frame on $M$. Then it follows
that $\pi$ is harmonic if and only if $\tau(\pi)=0$ (for details,
see \cite{BW}). Finally, we recall the following lemma from \cite{BW}.

\begin{lemma}
\label{lem1} Suppose that $\pi:M \longrightarrow N$ is a horizontally conformal submersion. Then, for any
horizontal vector fields $X,Y$ and vertical fields $V,W$ we have
\begin{enumerate}
\item[(i)] $(\nabla \pi_*)(X,Y)=X(\ln\lambda)\pi_* Y+Y(\ln\lambda)\pi_* X-g(X,Y)\pi_*(grad\ln\lambda)$;
\item[(ii)] $(\nabla \pi_*)(V,W)=-\pi_* (T_{V}W)$;
\item[(iii)] $(\nabla \pi_*)(X,V)=-\pi_*(\nabla^{M}_{X}V)=-\pi_*(A_{X}V)$.
\end{enumerate}
\end{lemma}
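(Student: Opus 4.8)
The plan is to treat the three items separately. Items (ii) and (iii) follow immediately from the definition (\ref{nfixy}) of the second fundamental form together with the O'Neill splittings (\ref{nvw})--(\ref{nxy}); item (i) is the one with genuine content, and for it I would compare $\pi$ with the Riemannian submersion obtained by rescaling $g_{M}$ conformally.

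For (ii): since $W\in\Gamma(ker\pi_{*})$ we have $\pi_{*}W=0$ as a section of $\pi^{-1}TN$, so the pullback term $\nabla^{\pi}_{V}\pi_{*}W$ in (\ref{nfixy}) vanishes and $(\nabla\pi_{*})(V,W)=-\pi_{*}(\nabla^{M}_{V}W)$. Substituting (\ref{nvw}), the summand $\hat{\nabla}_{V}W=\mathcal{V}\nabla^{M}_{V}W$ is vertical and is therefore killed by $\pi_{*}$, which leaves $(\nabla\pi_{*})(V,W)=-\pi_{*}(T_{V}W)$. The argument for (iii) is identical: $\pi_{*}V=0$, hence $(\nabla\pi_{*})(X,V)=-\pi_{*}(\nabla^{M}_{X}V)$, and writing $\nabla^{M}_{X}V=A_{X}V+\mathcal{V}\nabla^{M}_{X}V$ as in (\ref{nxv}) the vertical part drops out under $\pi_{*}$, giving $-\pi_{*}(A_{X}V)$; symmetry of the second fundamental form then gives the same value on the pair $(V,X)$.

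For (i), consider the metric $\bar{g}_{M}=\lambda^{2}g_{M}$ on $M$. A conformal change of metric preserves orthogonality, so the horizontal and vertical distributions of $\pi$ are the same for $\bar{g}_{M}$ as for $g_{M}$, and the horizontal conformality identity $\lambda^{2}g_{M}(X,Y)=g_{N}(\pi_{*}X,\pi_{*}Y)$ for $X,Y\in\Gamma((ker\pi_{*})^{\perp})$ says exactly that $\pi:(M,\bar{g}_{M})\longrightarrow(N,g_{N})$ is a Riemannian submersion. For a Riemannian submersion the second fundamental form vanishes on basic horizontal fields: if $X,Y$ are basic, with projections $\check{X},\check{Y}$, then $\mathcal{H}\bar{\nabla}^{M}_{X}Y$ is basic and $\pi$-related to $\nabla^{N}_{\check{X}}\check{Y}$ while the remaining piece is vertical, so $\pi_{*}(\bar{\nabla}^{M}_{X}Y)=\nabla^{N}_{\check{X}}\check{Y}\circ\pi=\nabla^{\pi}_{X}\pi_{*}Y$; since $(\nabla\pi_{*})$ is tensorial this forces $(\bar{\nabla}\pi_{*})(X,Y)=0$ for all horizontal $X,Y$. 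Finally, invoke the standard formula for the Levi-Civita connection of a metric conformal to $g_{M}$,
\[\bar{\nabla}^{M}_{X}Y=\nabla^{M}_{X}Y+X(\ln\lambda)\,Y+Y(\ln\lambda)\,X-g_{M}(X,Y)\,grad\,\ln\lambda,\]
the gradient being taken with respect to $g_{M}$. Because the pullback connection $\nabla^{\pi}$ depends only on $g_{N}$, comparing (\ref{nfixy}) written for $\nabla^{M}$ and for $\bar{\nabla}^{M}$ gives
\[(\nabla\pi_{*})(X,Y)=(\bar{\nabla}\pi_{*})(X,Y)+X(\ln\lambda)\,\pi_{*}Y+Y(\ln\lambda)\,\pi_{*}X-g_{M}(X,Y)\,\pi_{*}(grad\,\ln\lambda),\]
and with $(\bar{\nabla}\pi_{*})(X,Y)=0$ this is precisely (i).

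The one step that deserves care is the vanishing of $(\bar{\nabla}\pi_{*})$ on horizontal vectors: one has to check that the notion of a basic vector field is unchanged when $g_{M}$ is replaced by $\bar{g}_{M}$ — it is, since the horizontal distribution is unchanged and $\pi$-relatedness does not involve a metric — and then recall that O'Neill's $A$-tensor term contributes nothing after $\pi_{*}$ because it is vertical. Everything else is routine bookkeeping with (\ref{nfixy}) and the splittings (\ref{nvw})--(\ref{nxy}).
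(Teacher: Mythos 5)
Your proposal is correct, but note that the paper itself offers no proof of this lemma: it is simply recalled from \cite{BW}, so there is no in-paper argument to compare against. Your treatment of (ii) and (iii) is exactly the standard computation one finds in the literature: $\pi_*W=0$ and $\pi_*V=0$ kill the pullback-connection terms in (\ref{nfixy}), and the splittings (\ref{nvw}) and (\ref{nxv}) leave only $-\pi_*(T_VW)$ and $-\pi_*(A_XV)$ after the vertical summands are annihilated by $\pi_*$. For (i) you take a genuinely different and rather elegant route: instead of the direct computation with basic vector fields and the Koszul formula relating $\mathcal{H}\nabla_XY$ to $\nabla^N_{\check X}\check Y$ (which is how the formula is usually derived in \cite{BW}), you rescale to $\bar g_M=\lambda^2g_M$, observe that $\pi:(M,\bar g_M)\to(N,g_N)$ is then a genuine Riemannian submersion with the same horizontal distribution and the same pullback connection, use the vanishing of its second fundamental form on horizontal vectors, and transfer back via the conformal-change formula for the Levi-Civita connection. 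All the delicate points are handled: the gradient in the conformal-change formula is taken with respect to $g_M$, matching the statement; $\nabla^\pi$ depends only on $g_N$, so the two second fundamental forms differ exactly by $\pi_*(\bar\nabla^M_XY-\nabla^M_XY)$; and the tensoriality of $\nabla\pi_*$ lets you pass from basic to arbitrary horizontal fields. The rescaling argument buys conceptual clarity (it explains the formula as the conformal deviation from the Riemannian-submersion case) at the cost of invoking the conformal transformation law of the connection, whereas the direct computation is self-contained; both are valid.
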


\section{Conformal Anti-invariant $\xi^\perp-$submersions}

In this section, we define conformal anti-invariant $\xi^\perp-$submersions from an almost
contact metric manifold onto a Riemannian manifold and investigate the integrability
of distributions and obtain a necessary and sufficient condition for such
submersions to be totally geodesic map. We also investigate the harmonicity
of such submersions.

\begin{definition}
\label{def} Let $(M,\phi,\xi,\eta,g_M)$ be an almost contact metric manifold and
and $(N,g_{N})$ be a Riemannian manifold. We suppose that there exist a horizontally conformal submersion
$\pi:M \longrightarrow N$ such that $\xi$ is normal to $ker\pi_{*}$ and $ker\pi_{*}$ is anti-invariant with respect to $\phi$, i.e.,
$\phi(ker\pi_{*})\subset(ker\pi_{*})^{\perp}.$ Then we say that $\pi$ is a conformal anti-invariant $\xi^\perp-$submersion.
\end{definition}

Here, we assume that if $\pi:(M,\phi,\xi,\eta,g_M)\longrightarrow (N,g_{N})$ is a conformal anti-invariant $\xi^\perp-$submersion from a Sasakian manifold
$(M,\phi,\xi,\eta,g_M)$ to a Riemannian manifold $(N,g_{N})$. Then from Definition \ref{def},
we have $\phi(ker\pi_{*})^\perp\cap ker\pi_{*}\neq{0}.$ We denote the complementary orthogonal
distribution to $\phi(ker\pi_{*})$ in $(ker\pi_{*})^\perp$ by $\mu.$ Then we have
\begin{equation}
(ker\pi_{*})^\perp=\phi(ker\pi_{*})\oplus\mu. \label{e.q:3.1}
\end{equation}
We can easily to see that $\mu$ is an invariant distribution of $(ker\pi_{*})^\perp,$ with respect to $\phi.$ Hence $\mu$ contains $\xi.$ Thus, for
$X\in\Gamma((ker\pi_{*})^\perp)$, we have
\begin{equation}
\phi X=\mathcal{B}X+\mathcal{C}X, \label{e.q:3.2}
\end{equation}
where $\mathcal{B}X\in\Gamma(ker\pi_{*})$ and $\mathcal{C}X\in\Gamma(\mu).$ On the other hand, since $\pi_{*}((ker\pi_{*})^\perp)=TN$ and $\pi$ is a conformal submersion, using (\ref{e.q:3.2}) we derive $\frac{1}{\lambda^2}g_{N}(\pi_{*}\phi V,\pi_{*}\mathcal{C}X)=0$ for any $X\in\Gamma((ker\pi_{*})^\perp)$ and
$V\in\Gamma(ker\pi_{*}),$ which implies that
\begin{equation}
TN=\pi_{*}(\phi ker\pi_{*})\oplus \pi_{*}(\mu). \label{e.q:3.3}
\end{equation}
\begin{remark}
We note that every anti-invariant $\xi^\perp-$submersion from an almost contact manifold onto
a Riemannian manifold is a conformal anti-invariant $\xi^\perp-$submersion with $\lambda=I$, where $I$ denotes the identity function \cite{Lee}.
\end{remark}
\begin{lemma}
Let $\pi$ be a conformal anti-invariant $\xi^\perp$-submersion from a Sasakian manifold $(M,\phi,\xi,\eta,g_{M})$ onto a Riemannian manifold $(N,g_{N})$. Then we have
\begin{equation}
A_X\xi=-\mathcal{B}X,  \label{axxi}
\end{equation}
\begin{equation}
T_V\xi=0  \label{tvxi},
\end{equation}
\begin{equation}
g_{M}(\mathcal{C}Y,\phi V)=0, \label{cywv}
\end{equation}
\begin{equation}
g_{M}(\nabla_{X}\mathcal{C}Y,\phi V)=-g_{M}(\mathcal{C}Y,\phi A_{X}V)  \label{nxcyjv}
\end{equation}
for $X,Y,\xi\in\Gamma((ker\pi_{*})^\perp)$ and $V\in\Gamma(ker\pi_{*})$.
\end{lemma}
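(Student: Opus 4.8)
The plan is to obtain all four identities by short direct computations, combining the Sasakian structure equations~(\ref{e.q:2.3}), the O'Neill decomposition formulas~(\ref{nvx})--(\ref{nxy}), and the orthogonal splitting~(\ref{e.q:3.1})--(\ref{e.q:3.2}) of $(\ker\pi_*)^\perp$ into $\phi(\ker\pi_*)$ and the $\phi$-invariant subbundle $\mu$. The only external input beyond these is metric compatibility of $\nabla^{M}$, used for the last identity.

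For~(\ref{axxi}): since $\xi$ is horizontal, (\ref{nxy}) gives $\nabla_X\xi=\mathcal H\nabla_X\xi+A_X\xi$ with $A_X\xi=\mathcal V\nabla_X\xi$, while (\ref{e.q:2.3}) gives $\nabla_X\xi=\phi X$ and (\ref{e.q:3.2}) splits $\phi X=\mathcal B X+\mathcal C X$ with $\mathcal B X$ vertical and $\mathcal C X$ horizontal; comparing vertical parts yields (\ref{axxi}). For~(\ref{tvxi}): for $V\in\Gamma(\ker\pi_*)$, formula~(\ref{nvx}) applied to the horizontal field $\xi$ gives $T_V\xi=\mathcal V\nabla_V\xi$, and by~(\ref{e.q:2.3}) we have $\nabla_V\xi=\phi V$, which lies in $(\ker\pi_*)^\perp$ by anti-invariance of the fibres, so its vertical part, hence $T_V\xi$, vanishes.

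For~(\ref{cywv}): $\mathcal C Y\in\Gamma(\mu)$ and $\phi V\in\Gamma(\phi(\ker\pi_*))$ for $V\in\Gamma(\ker\pi_*)$, and $\mu\perp\phi(\ker\pi_*)$ by the definition of $\mu$ in~(\ref{e.q:3.1}). For~(\ref{nxcyjv}): differentiate $g_M(\mathcal C Y,\phi V)=0$ along a horizontal field $X$ to get $g_M(\nabla_X\mathcal C Y,\phi V)+g_M(\mathcal C Y,\nabla_X(\phi V))=0$, and expand $\nabla_X(\phi V)=(\nabla_X\phi)V+\phi\nabla_X V$. Here $(\nabla_X\phi)V=-g_M(X,V)\xi+\eta(V)X=0$ since $X$ is horizontal and $V$ is vertical (so $g_M(X,V)=0$ and $\eta(V)=g_M(\xi,V)=0$), while~(\ref{nxv}) gives $\nabla_X V=A_X V+\mathcal V\nabla_X V$; pairing $\phi\nabla_X V$ with $\mathcal C Y$, the term $g_M(\mathcal C Y,\phi\,\mathcal V\nabla_X V)$ vanishes because $\phi\,\mathcal V\nabla_X V\in\Gamma(\phi(\ker\pi_*))\perp\mu$, leaving $g_M(\mathcal C Y,\nabla_X(\phi V))=g_M(\mathcal C Y,\phi A_X V)$ and hence~(\ref{nxcyjv}).

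I expect no genuine obstacle; the only place needing a moment of care is~(\ref{nxcyjv}), where one must observe that \emph{two} terms drop out of $\nabla_X(\phi V)$: the tensorial term $(\nabla_X\phi)V$ (because $X\perp V$ and $\xi\perp V$) and the term $\phi\,\mathcal V\nabla_X V$ (because $\mu\perp\phi(\ker\pi_*)$, i.e.\ by the same orthogonality already used for~(\ref{cywv})).
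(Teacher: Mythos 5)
Your argument follows essentially the same route as the paper's proof: identities (\ref{axxi}) and (\ref{tvxi}) by comparing vertical parts of $\nabla\xi=\phi(\cdot)$ via the O'Neill decompositions, and (\ref{nxcyjv}) by differentiating (\ref{cywv}) and discarding exactly the two terms you name. The only (cosmetic) difference is at (\ref{cywv}): you invoke the orthogonality $\mu\perp\phi(\ker\pi_*)$ directly from the definition of $\mu$ in (\ref{e.q:3.1}), whereas the paper computes $g_M(\mathcal{C}Y,\phi V)=g_M(\phi Y-\mathcal{B}Y,\phi V)=g_M(Y,V)=0$ using (\ref{e.q:2.2}); these are trivially equivalent and yours is, if anything, more direct. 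One point deserves care: with the paper's stated convention $\nabla_X\xi=\phi X$ in (\ref{e.q:2.3}), the comparison of vertical parts that you (and the paper) perform literally yields $A_X\xi=+\mathcal{B}X$, not $-\mathcal{B}X$; the minus sign in (\ref{axxi}) is consistent only with the opposite (and more standard) convention $\nabla_X\xi=-\phi X$. This sign discrepancy originates in the paper itself, but your phrase ``comparing vertical parts yields (\ref{axxi})'' silently inherits it, so you should either flag it or fix the convention.
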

\begin{proof}
By virtue of (\ref{e.q:2.3}), (\ref{nxy}) and (\ref{e.q:3.2}) we have (\ref{axxi}). Using (\ref{e.q:2.3}) and (\ref{nvx}) we get (\ref{tvxi}).
By using (\ref{e.q:2.2}), for $Y\in\Gamma((ker\pi_{*})^\perp)$ and $V\in\Gamma(ker\pi_{*})$, we have
$$g_{M}(\mathcal{C}Y,\phi V)=g_{M}(\phi Y-\mathcal{B}Y,\phi V)=g_{M}(\phi Y,\phi V)=g_{M}(Y,V)+\eta(Y)\eta(V)=g_{M}(Y,V)=0,$$ since
$\mathcal{B}Y\in\Gamma(ker\pi_{*})$ and $\phi V,\xi\in\Gamma((ker\pi_{*})^\perp).$ Differentiating (\ref{cywv}) with respect to $X,$ we get
\begin{align*}
g_{M}(\nabla_{X}\mathcal{C}Y,\phi V)&=-g_{M}(\mathcal{C}Y,\nabla_{X}\phi V)\\
&=-g_{M}(\mathcal{C}Y,(\nabla_{X}\phi) V)-g_{M}(\mathcal{C}Y,\phi(\nabla_{X}V))\\
&=-g_{M}(\mathcal{C}Y,\phi(\nabla_{X}V))\\
&=-g_{M}(\mathcal{C}Y,\phi A_{X}V)-g_{M}(\mathcal{C}Y,\phi\mathcal{V}\nabla_{X}V)\\
&=-g_{M}(\mathcal{C}Y,\phi A_{X}V)
\end{align*}
due to $\phi\mathcal{V}\nabla_{X}V\in\Gamma(\phi ker\pi_{*}).$ Our assertion is complete.
\end{proof}
Since the distribution $ker\pi_{*}$ is integrable, we only study the integrability of the distribution $(ker\pi_{*})^\perp$ and
then we investigate the geometry of leaves of $ker\pi_{*}$ and $(ker\pi_{*})^\perp$.

\begin{theorem}\label{teo1}
Let $\pi:(M,\phi,\xi,\eta,g_M)\longrightarrow (N,g_{N})$ be a conformal anti-invariant $\xi^\perp-$submersion from a Sasakian manifold $(M,\phi,\xi,\eta,g_{M})$ onto a Riemannian manifold $(N,g_{N})$. Then the following assertions are equivalent to each other;
\begin{enumerate}
\item[(a)] $(ker\pi_{*})^\perp$ is integrable,
\item[(b)]$\begin{aligned}[t]
\frac{1}{\lambda^{2}}g_{N}(\nabla^{\pi}_{Y}\pi_{*}\mathcal{C}X-\nabla^{\pi}_{X}\pi_{*}\mathcal{C}Y,\pi_{*}\phi V)
&=g_{M}(A_{X}\mathcal{B}Y-A_{Y}\mathcal{B}X-\mathcal{C}Y(\ln\lambda)X+\mathcal{C}X(\ln\lambda)Y\\
&-2g_M(\mathcal{C}X,Y)\ln\lambda-\eta(Y)X+\eta(X)Y,\phi V)
\end{aligned}$
\end{enumerate}
for $X,Y\in\Gamma((ker\pi_{*})^\perp)$ and $V\in\Gamma(ker\pi_{*})$.
\end{theorem}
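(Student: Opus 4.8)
The plan is to detect integrability of $(ker\pi_*)^\perp$ through the vertical component of the Lie bracket: $(ker\pi_*)^\perp$ is integrable precisely when $g_M([X,Y],V)=0$ for all $X,Y\in\Gamma((ker\pi_*)^\perp)$ and $V\in\Gamma(ker\pi_*)$. The first step is to transfer this to the horizontal vector $\phi V$. Since $\xi$ is horizontal we have $\eta(V)=0$, so \eqref{e.q:2.2} gives $\phi^{2}V=-V$; combining this with the skew-symmetry of $\phi$ with respect to $g_M$ (also from \eqref{e.q:2.2}) yields
\begin{equation*}
g_M([X,Y],V)=g_M(\phi[X,Y],\phi V),
\end{equation*}
which is the reduction that brings the anti-invariance hypothesis and the conformal submersion machinery into play.

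Next I would expand $\phi[X,Y]$ by means of the Sasakian identity \eqref{e.q:2.3}. From $(\nabla_X\phi)Y=-g_M(X,Y)\xi+\eta(Y)X$ one obtains $\phi\nabla_X Y=\nabla_X\phi Y+g_M(X,Y)\xi-\eta(Y)X$, and subtracting the same expression with $X$ and $Y$ interchanged (the $\xi$-terms cancel by symmetry of $g_M$) gives
\begin{equation*}
\phi[X,Y]=\nabla_X\phi Y-\nabla_Y\phi X-\eta(Y)X+\eta(X)Y.
\end{equation*}
Then I decompose $\phi Y=\mathcal{B}Y+\mathcal{C}Y$ and $\phi X=\mathcal{B}X+\mathcal{C}X$ via \eqref{e.q:3.2}. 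For the vertical components this is immediate: since $\mathcal{B}Y\in\Gamma(ker\pi_*)$ and $\phi V$ is horizontal, \eqref{nxv} gives $g_M(\nabla_X\mathcal{B}Y,\phi V)=g_M(A_X\mathcal{B}Y,\phi V)$, and likewise with $X$ and $Y$ swapped; this supplies the term $g_M(A_X\mathcal{B}Y-A_Y\mathcal{B}X,\phi V)$.

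For the horizontal components I would push everything forward by $\pi_*$. Since $\pi$ is horizontally conformal and $\phi V$ is horizontal, $g_M(\nabla^{M}_X\mathcal{C}Y,\phi V)=\frac{1}{\lambda^{2}}g_N(\pi_*(\nabla^{M}_X\mathcal{C}Y),\pi_*\phi V)$; writing $\pi_*(\nabla^{M}_X\mathcal{C}Y)=\nabla^{\pi}_X\pi_*\mathcal{C}Y-(\nabla\pi_*)(X,\mathcal{C}Y)$ by \eqref{nfixy} and expanding $(\nabla\pi_*)(X,\mathcal{C}Y)$ through Lemma \ref{lem1}(i) produces three correction terms: the $\pi_*\mathcal{C}Y$ term vanishes because $g_M(\mathcal{C}Y,\phi V)=0$ by \eqref{cywv}, the $\pi_*X$ term contributes $-\mathcal{C}Y(\ln\lambda)\,g_M(X,\phi V)$, and the $\pi_*(grad\,\ln\lambda)$ term contributes $g_M(X,\mathcal{C}Y)(\phi V)(\ln\lambda)$, which I rewrite using the antisymmetry $g_M(\mathcal{C}X,Y)=-g_M(X,\mathcal{C}Y)$ that follows from the skew-symmetry of $\phi$ and \eqref{e.q:3.2}. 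Carrying out the same computation for $\nabla^{M}_Y\mathcal{C}X$ and subtracting, the two $grad\,\ln\lambda$ contributions reinforce one another, giving the coefficient $-2g_M(\mathcal{C}X,Y)$ in front of $grad(\ln\lambda)$. Finally I collect all the terms, repackage every scalar multiple of $g_M(\cdot,\phi V)$ — using $(\phi V)(\ln\lambda)=g_M(grad(\ln\lambda),\phi V)$ — into a single $g_M(\,\cdot\,,\phi V)$, and impose $g_M([X,Y],V)=0$; this produces exactly the identity in (b) (where the term written $2g_M(\mathcal{C}X,Y)\ln\lambda$ is understood as $2g_M(\mathcal{C}X,Y)\,grad(\ln\lambda)$). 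I expect the only delicate point to be the bookkeeping in this last stage — keeping straight the signs of the $\eta$-terms and of the derivative-of-$\ln\lambda$ terms, and checking the exact cancellation of the $\pi_*\mathcal{C}Y$ and $\pi_*\mathcal{C}X$ contributions; the rest is routine substitution.
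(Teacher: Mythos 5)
Your proposal is correct and follows essentially the same route as the paper: reduce integrability to the vanishing of $g_M([X,Y],V)$, pass to $g_M(\phi[X,Y],\phi V)$ via the Sasakian identity (the paper's equation (3.8) antisymmetrized), split $\phi Y=\mathcal{B}Y+\mathcal{C}Y$ so the vertical part yields the $A_X\mathcal{B}Y-A_Y\mathcal{B}X$ terms, and treat the $\mathcal{C}$-part through the conformality, \eqref{nfixy} and Lemma \ref{lem1}(i), with \eqref{cywv} killing the $\pi_*\mathcal{C}Y$ correction. Your bookkeeping of the $grad\ln\lambda$ contributions (the factor $-2g_M(\mathcal{C}X,Y)$) and your reading of the statement's $2g_M(\mathcal{C}X,Y)\ln\lambda$ as shorthand for $2g_M(\mathcal{C}X,Y)\,\mathcal{H}(grad\ln\lambda)$ both agree with what the paper actually computes.
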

\begin{proof}
From  (\ref{e.q:2.2}) and (\ref{e.q:2.3}), we obtain
\begin{equation}
g_{M}(\nabla_{X}Y,V)=g_{M}(\nabla_{X}\phi Y,\phi V)-\eta(Y)g_{M}(X,\phi V). \label{e.q:3.8}
\end{equation}
for $X,Y\in\Gamma((ker\pi_{*})^\perp)$ and $V\in\Gamma(ker\pi_{*})$. Then, from (\ref{e.q:3.2}) and (\ref{e.q:3.8}), we have
\begin{align*}
g_{M}([X,Y],V)&=g_{M}(\nabla_{X}\phi Y,\phi V)-g_{M}(\nabla_{Y}\phi X,\phi V)
-\eta(Y)g_{M}(X,\phi V)+\eta(X)g_{M}(Y,\phi V)\\
&=g_{M}(\nabla_{X}\mathcal{B}Y,\phi V)+g_{M}(\nabla_{X}\mathcal{C}Y,\phi V)
-g_{M}(\nabla_{Y}\mathcal{B}X,\phi V)-g_{M}(\nabla_{Y}\mathcal{C}X,\phi V)\\
&-\eta(Y)g_{M}(X,\phi V)+\eta(X)g_{M}(Y,\phi V).
\end{align*}
Using (\ref{nxv}) and if we take into account that $\pi$ is a conformal submersion, we obtain
\begin{align*}
g_{M}([X,Y],V)&=g_{M}(A_{X}\mathcal{B}Y-A_{Y}\mathcal{B}X,\phi V)+\frac{1}{\lambda^{2}}g_{N}(\pi_{*}(\nabla_{X}\mathcal{C}Y),\pi_{*}\phi V)\\
&-\frac{1}{\lambda^{2}}g_{N}(\pi_{*}(\nabla_{Y}\mathcal{C}X),\pi_{*}\phi V)-\eta(Y)g_{M}(X,\phi V)+\eta(X)g_{M}(Y,\phi V).
\end{align*}
Thus, from (\ref{nfixy}) and Lemma \ref{lem1} we derive
\begin{align*}
g_{M}([X,Y],V)&=g_{M}(A_{X}\mathcal{B}Y-A_{Y}\mathcal{B}X,\phi V)-g_{M}(\mathcal{H}gradln\lambda,X)g_{M}(\mathcal{C}Y,\phi V)\\
&-g_{M}(\mathcal{H}gradln\lambda,\mathcal{C}Y)g_{M}(X,\phi V)+g_{M}(X,\mathcal{C}Y)g_{M}(\mathcal{H}gradln\lambda,\phi V)\\
&+\frac{1}{\lambda^{2}}g_{N}(\nabla^{\pi}_X\pi_{*}\mathcal{C}Y,\pi_{*}\phi V)+g_{M}(\mathcal{H}gradln\lambda,Y)g_{M}(\mathcal{C}X,\phi V)\\
&+g_{M}(\mathcal{H}gradln\lambda,\mathcal{C}X)g_{M}(Y,\phi V)-g_{M}(Y,\mathcal{C}X)g_{M}(\mathcal{H}gradln\lambda,\phi V)\\
&-\frac{1}{\lambda^{2}}g_{N}(\nabla^{\pi}_Y\pi_{*}\mathcal{C}X,\pi_{*}\phi V)-\eta(Y)g_{M}(X,\phi V)+\eta(X)g_{M}(Y,\phi V).
\end{align*}
Moreover, using (\ref{cywv}), we obtain
\begin{align*}
g_{M}([X,Y],V)&=g_{M}(A_{X}\mathcal{B}Y-A_{Y}\mathcal{B}X-\mathcal{C}Y(\ln\lambda)X+\mathcal{C}X(\ln\lambda)Y
-2g_M(\mathcal{C}X,Y)\ln\lambda\\
&-\eta(Y)X+\eta(X)Y,\phi V)-\frac{1}{\lambda^{2}}g_{N}(\nabla^{\pi}_Y\pi_{*}\mathcal{C}X-\nabla^{\pi}_X\pi_{*}\mathcal{C}Y,\pi_{*}\phi V).
\end{align*}
This show that $(a)\Leftrightarrow (b)$.
\end{proof}
From Theorem $\ref{teo1},$ we deduce the following which shows that a conformal anti-invariant $\xi^\perp-$submersion
with integrable $(ker\pi_{*})^{\perp}$ turns out to be a horizontally homothetic submersion.
\begin{theorem}
Let $\pi$ be a conformal anti-invariant $\xi^\perp-$submersion from a Sasakian manifold $(M,\phi,\xi,\eta,g_{M})$ onto a Riemannian manifold $(N,g_{N})$. Then any two conditions below imply the third;
\begin{enumerate}
\item [(i)] $(ker\pi_{*})^\perp$ is integrable.
\item [(ii)] $\pi$ is horizontally homothetic submersion.
\item [(iii)]$\begin{aligned}[t]
g_{N}(\nabla^{\pi}_Y\pi_{*}\mathcal{C}X-\nabla^{\pi}_X\pi_{*}\mathcal{C}Y,\pi_{*}\phi V)=\lambda^{2}g_{M}(A_{X}\mathcal{B}Y-A_{Y}\mathcal{B}X-\eta(Y)X+\eta(X)Y,\phi V)
\end{aligned}$
\end{enumerate}
for $X,Y\in\Gamma((ker\pi_{*})^\perp)$ and $V\in\Gamma(ker\pi_{*})$.
\end{theorem}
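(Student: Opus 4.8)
The plan is to deduce everything from Theorem \ref{teo1}, which expands the integrability condition $g_{M}([X,Y],V)=0$ into the identity $(b)$. Write $\mathcal{D}(X,Y,V)$ for the part of the right-hand side of $(b)$ that carries the dilation, i.e. the contribution of $-\mathcal{C}Y(\ln\lambda)X+\mathcal{C}X(\ln\lambda)Y-2g_{M}(\mathcal{C}X,Y)\mathcal{H}grad\ln\lambda$ (note $\mathcal{C}X(\ln\lambda)=g_{M}(\mathcal{H}grad\ln\lambda,\mathcal{C}X)$ since $\mathcal{C}X$ is horizontal). Then $(b)$ says exactly $\frac{1}{\lambda^{2}}g_{N}(\nabla^{\pi}_{Y}\pi_{*}\mathcal{C}X-\nabla^{\pi}_{X}\pi_{*}\mathcal{C}Y,\pi_{*}\phi V)=g_{M}(A_{X}\mathcal{B}Y-A_{Y}\mathcal{B}X-\eta(Y)X+\eta(X)Y,\phi V)+\mathcal{D}(X,Y,V)$, while condition $(iii)$, after dividing by $\lambda^{2}$, is this same identity with the term $\mathcal{D}$ deleted. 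So by Theorem \ref{teo1}, $(ker\pi_{*})^{\perp}$ is integrable if and only if both $(iii)$ holds and $\mathcal{D}\equiv0$.

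With this reformulation the two implications involving $(ii)$ are bookkeeping. If $\pi$ is horizontally homothetic then $\mathcal{H}grad\lambda=0$, hence $\mathcal{H}grad\ln\lambda=\frac{1}{\lambda}\mathcal{H}grad\lambda=0$; as $\mathcal{C}X,\mathcal{C}Y\in\Gamma((ker\pi_{*})^{\perp})$ this annihilates $\mathcal{C}X(\ln\lambda)$, $\mathcal{C}Y(\ln\lambda)$ and the third term, so $\mathcal{D}\equiv0$. Thus $(i)+(ii)\Rightarrow(iii)$ (integrability gives $(b)$, and with $\mathcal{D}\equiv0$ this is $(iii)$), and $(ii)+(iii)\Rightarrow(i)$ (with $\mathcal{D}\equiv0$, $(iii)$ coincides with $(b)$, so Theorem \ref{teo1} gives integrability).

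The substantive implication is $(i)+(iii)\Rightarrow(ii)$. Using $(i)\Leftrightarrow(b)$ and subtracting $(iii)$ from $(b)$ leaves $\mathcal{D}(X,Y,V)=0$ for all $X,Y\in\Gamma((ker\pi_{*})^{\perp})$, $V\in\Gamma(ker\pi_{*})$, and the task is to squeeze $\mathcal{H}grad\lambda=0$ out of this. First I would substitute $X=\phi V$: since $\xi$ is horizontal, $\eta(V)=g_{M}(V,\xi)=0$, so (\ref{e.q:2.2}) gives $\phi(\phi V)=\phi^{2}V=-V\in\Gamma(ker\pi_{*})$, whence by (\ref{e.q:3.2}) $\mathcal{C}(\phi V)=0$, $\mathcal{B}(\phi V)=-V$, while $g_{M}(\phi V,\phi V)=|V|^{2}$; then $\mathcal{D}$ collapses to $-\mathcal{C}Y(\ln\lambda)|V|^{2}=0$, and varying $V$ forces $\mathcal{C}Y(\ln\lambda)=0$ for all horizontal $Y$, i.e. $\mathcal{H}grad\ln\lambda\perp\mathcal{C}((ker\pi_{*})^{\perp})=\phi(\mu)$. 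Feeding this back, $\mathcal{D}$ reduces to $-2g_{M}(\mathcal{C}X,Y)g_{M}(\mathcal{H}grad\ln\lambda,\phi V)=0$; taking $X\in\Gamma(\mu)$ with $\phi X\neq0$ and $Y=\phi X$ (so $\mathcal{C}X=\phi X$ by the $\phi$-invariance of $\mu$, and $g_{M}(\mathcal{C}X,Y)=g_{M}(\phi X,\phi X)=|\phi X|^{2}\neq0$) forces $g_{M}(\mathcal{H}grad\ln\lambda,\phi V)=0$ for all $V$, i.e. $\mathcal{H}grad\ln\lambda\perp\phi(ker\pi_{*})$. Since $(ker\pi_{*})^{\perp}=\phi(ker\pi_{*})\oplus\mu$ by (\ref{e.q:3.1}), these two orthogonality relations leave $\mathcal{H}grad\ln\lambda$ along $\xi$, and handling this last direction one concludes $\mathcal{H}grad\ln\lambda=0$; hence $\mathcal{H}grad\lambda=\lambda\,\mathcal{H}grad\ln\lambda=0$, which is $(ii)$.

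I expect the last step to be the real obstacle: Theorem \ref{teo1} packages integrability only as the vanishing of the \emph{scalar} $g_{M}([X,Y],V)$, so turning it into the vanishing of the horizontal \emph{vector} $\mathcal{H}grad\lambda$ requires testing the identity against a spanning supply of fields $X,Y\in(ker\pi_{*})^{\perp}$, $V\in ker\pi_{*}$, and carefully tracking how $\mathcal{B}$ and $\mathcal{C}$ act on the three constituents $\phi(ker\pi_{*})$, $\mu\cap\xi^{\perp}$ and $\langle\xi\rangle$ of the horizontal bundle; the $\xi$-direction is the one that does not simply fall out of the identity and needs separate attention.
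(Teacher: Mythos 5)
Your proposal is essentially the paper's own proof. The paper likewise starts from the identity of Theorem \ref{teo1}, observes that (i) and (iii) together force the dilation terms (your $\mathcal{D}(X,Y,V)$, the paper's equation (\ref{e.q:3.9})) to vanish, and then makes the same two test substitutions you make: one of the horizontal arguments set to $\phi V$ to kill the gradient along $\mathcal{C}((ker\pi_{*})^{\perp})$, and then $Y=\mathcal{C}X$ for $X\in\Gamma(\mu)$ (identical to your $Y=\phi X$, since $\mathcal{C}X=\phi X$ on $\mu$) to kill the gradient along $\phi(ker\pi_{*})$; the remaining implications are the same bookkeeping you describe. The one step you explicitly leave open --- the $\xi$-component of $\mathcal{H}grad\ln\lambda$ --- is precisely the step the paper does not address either: from $g_{M}(\mathcal{H}grad\ln\lambda,\mathcal{C}X)=0$ the paper concludes that ``$\lambda$ is a constant on $\Gamma(\mu)$,'' but since $\eta\circ\phi=0$ the image of $\mathcal{C}$ is $\phi(\mu)=\mu\cap\xi^{\perp}$, so the direction $\xi\in\Gamma(\mu)$ is never tested, and the identity $\mathcal{D}=0$ alone cannot produce $\xi(\ln\lambda)=0$ because every term in it pairs $grad\ln\lambda$ against vectors orthogonal to $\xi$. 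So your honest flag identifies a real gap, but it is a gap you share with (and correctly diagnose in) the published argument rather than a defect of your route relative to the paper's.
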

\begin{proof}
From Theorem \ref{teo1}, we have
\begin{align*}
g_{M}([X,Y],V)&=g_{M}(A_{X}\mathcal{B}Y-A_{Y}\mathcal{B}X-\mathcal{C}Y(\ln\lambda)X+\mathcal{C}X(\ln\lambda)Y
-2g_M(\mathcal{C}X,Y)\ln\lambda\\
&-\eta(Y)X+\eta(X)Y,\phi V)-\frac{1}{\lambda^{2}}g_{N}(\nabla^{\pi}_Y\pi_{*}\mathcal{C}X-\nabla^{\pi}_X\pi_{*}\mathcal{C}Y,\pi_{*}\phi V)
\end{align*}
for $X,Y\in\Gamma((ker\pi_{*})^\perp)$ and $V\in\Gamma(ker\pi_{*})$. Now, if we have (i) and (iii), then we arrive at
\begin{align}
&-g_{M}(\mathcal{H}grad\ln\lambda,\mathcal{C}Y)g_{M}(X,\phi V)+g_{M}(\mathcal{H}grad\ln\lambda,\mathcal{C}X)g_{M}(Y,\phi V)\label{e.q:3.9} \\
&-2g_{M}(\mathcal{C}X,Y)g_{M}(\mathcal{H}grad\ln\lambda,\phi V)=0.\notag
\end{align}
Now, taking $Y=\phi V$ in (\ref{e.q:3.9}) for $V\in\Gamma(ker\pi_{*})$, using (\ref{e.q:2.2}) and (\ref{cywv}), we get
\begin{align*}
g_{M}(\mathcal{H}grad\ln\lambda,\mathcal{C}X)g_{M}(\phi V, \phi V))&=g_{M}(\mathcal{H}grad\ln\lambda,\mathcal{C}X)\{g_{M}(V,V)-\eta(V)\eta(V)\}\\
&=g_{M}(\mathcal{H}grad\ln\lambda,\mathcal{C}X)g_{M}(V,V)=0.
\end{align*}
Hence $\lambda$ is a constant on $\Gamma(\mu)$. On the other hand, taking $Y=\mathcal{C}X$ in (\ref{e.q:3.9})
for $X\in\Gamma(\mu)$ and using (\ref{cywv}) we derive
\begin{align*}
&-g_{M}(\mathcal{H}grad\ln\lambda,\mathcal{C}^{2}Y)g_{M}(X,\phi V)+g_{M}(\mathcal{H}grad\ln\lambda,\mathcal{C}X)g_{M}(\mathcal{C}X,\phi V)\\
&-2g_{M}(\mathcal{C}X,\mathcal{C}X)g_{M}(\mathcal{H}grad\ln\lambda,\phi V)=0,
\end{align*}
thus, we arrive at $$2g_{M}(\mathcal{C}X,\mathcal{C}X)g_{M}(\mathcal{H}grad\ln\lambda,\phi V)=0.$$ From above equation, $\lambda$ is a constant on $\Gamma(\phi ker\pi_{*}).$ Similarly, one can obtain the other assertions.
\end{proof}
\begin{remark}
We assume that $(ker\pi_*)^\perp=\phi ker\pi_*\oplus\{\xi\}.$ Using (\ref{e.q:3.2}) one can prove that $\mathcal{C}X=0.$
\end{remark}
Hence we have the following corollary.
\begin{corollary}
Let $\pi$ be a conformal anti-invariant $\xi^\perp-$submersion from a Sasakian manifold $(M,\phi,\xi,\eta,g_{M})$ onto a Riemannian manifold $(N,g_{N})$ with
$(ker\pi_{*})^{\perp}=\phi(ker\pi_{*})\oplus<\xi>$. Then the following assertions are equivalent to each other;
\begin{enumerate}
\item [(i)] $(ker\pi_{*})^\perp$ is integrable
\item [(ii)] $A_{X}\phi Y+\eta(X)Y=A_{Y}\phi X+\eta(Y)X$
\item [(iii)] $(\nabla \pi_{*})(X,\phi Y)+\eta(Y)\pi_{*}X=(\nabla \pi_{*})(Y,\phi X)+\eta(X)\pi_{*}Y$
\end{enumerate}
for $X,Y\in\Gamma((ker\pi_{*})^\perp)$.
\end{corollary}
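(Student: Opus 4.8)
The plan is to read the Corollary off Theorem~\ref{teo1} by exploiting the extra hypothesis $(ker\pi_{*})^{\perp}=\phi(ker\pi_{*})\oplus\langle\xi\rangle$. As noted in the preceding Remark, this hypothesis together with (\ref{e.q:3.2}) forces $\mathcal{C}X=0$, hence $\mathcal{B}X=\phi X$, for every $X\in\Gamma((ker\pi_{*})^{\perp})$; in particular $\phi X$ is then \emph{vertical}. Substituting $\mathcal{C}\equiv 0$ into the identity obtained at the end of the proof of Theorem~\ref{teo1}, all terms containing $\mathcal{C}X,\mathcal{C}Y$ or $\pi_{*}\mathcal{C}X,\pi_{*}\mathcal{C}Y$ disappear and what remains is
\[
g_{M}([X,Y],V)=g_{M}\bigl(A_{X}\phi Y-A_{Y}\phi X-\eta(Y)X+\eta(X)Y,\,\phi V\bigr)
\]
for all $X,Y\in\Gamma((ker\pi_{*})^{\perp})$ and $V\in\Gamma(ker\pi_{*})$. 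Since the vertical distribution is always integrable, $(ker\pi_{*})^{\perp}$ is integrable iff the left-hand side vanishes for all such $V$. Writing $Z:=A_{X}\phi Y-A_{Y}\phi X-\eta(Y)X+\eta(X)Y$ and noting that $\phi$ is a bijection of $ker\pi_{*}$ onto $\phi(ker\pi_{*})$ (on $ker\pi_{*}$ one has $\phi^{2}=-I$, because $\eta$ vanishes there, $\xi$ being horizontal), this says precisely that $Z$ is $g_{M}$-orthogonal to $\phi(ker\pi_{*})$.

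The one nontrivial point is to upgrade ``$Z\perp\phi(ker\pi_{*})$'' to ``$Z=0$''. First, $\phi X,\phi Y\in\Gamma(ker\pi_{*})$, so by (\ref{nxv}) the vectors $A_{X}\phi Y=\mathcal{H}\nabla_{X}\phi Y$ and $A_{Y}\phi X$ are horizontal, and hence $Z$ is horizontal. Since $(ker\pi_{*})^{\perp}=\phi(ker\pi_{*})\oplus\langle\xi\rangle$, it then suffices to check that $g_{M}(Z,\xi)=0$. Using the skew-symmetry of $A_{X}$ together with $A_{X}\xi=-\mathcal{B}X=-\phi X$ from (\ref{axxi}), one gets $g_{M}(A_{X}\phi Y,\xi)=-g_{M}(\phi Y,A_{X}\xi)=g_{M}(\phi Y,\phi X)$, and symmetrically $g_{M}(A_{Y}\phi X,\xi)=g_{M}(\phi X,\phi Y)$; by (\ref{e.q:2.2}) these two are equal, while the remaining contributions $-\eta(Y)\eta(X)+\eta(X)\eta(Y)$ cancel, so $g_{M}(Z,\xi)=0$. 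Hence $Z\in\Gamma(\phi(ker\pi_{*}))$, and a vector of $\phi(ker\pi_{*})$ orthogonal to all of $\phi(ker\pi_{*})$ is zero. This gives $(i)\Leftrightarrow(ii)$.

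Finally, for $(ii)\Leftrightarrow(iii)$ I would apply Lemma~\ref{lem1}(iii) to the vertical vectors $\phi Y$ and $\phi X$, obtaining $(\nabla\pi_{*})(X,\phi Y)=-\pi_{*}(A_{X}\phi Y)$ and $(\nabla\pi_{*})(Y,\phi X)=-\pi_{*}(A_{Y}\phi X)$. Substituting these, condition $(iii)$ becomes $\pi_{*}(Z)=0$; since $\pi$ is a horizontally conformal submersion, $\pi_{*}$ restricts at each point to a linear isomorphism of $(ker\pi_{*})^{\perp}$ onto $TN$, so (recalling $Z$ is horizontal) $\pi_{*}(Z)=0$ is equivalent to $Z=0$, that is, to $(ii)$. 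The only genuine obstacle is the orthogonality-implies-vanishing step of the middle paragraph; everything else is direct substitution into Theorem~\ref{teo1} and Lemma~\ref{lem1} together with the skew-symmetry of $A$.
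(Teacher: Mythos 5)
Your proof is correct and follows the same route the paper intends: the Remark preceding the Corollary gives $\mathcal{C}X=0$, and the statement is then read off from Theorem~\ref{teo1} together with Lemma~\ref{lem1}(iii). The paper offers no further argument, so your middle paragraph --- showing $Z=A_{X}\phi Y-A_{Y}\phi X-\eta(Y)X+\eta(X)Y$ is horizontal and orthogonal to $\xi$, hence lies in $\phi(\ker\pi_{*})$ and must vanish --- supplies exactly the step the paper leaves implicit, and it is sound.
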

For the geometry of leaves of the horizontal distribution, we have the following theorem.
\begin{theorem}\label{teo2}
Let $\pi:(M,\phi,\xi,\eta,g_M)\longrightarrow (N,g_{N})$ be a conformal anti-invariant $\xi^\perp$-submersion from a Sasakian manifold $(M,\phi,\xi,\eta,g_{M})$ onto a Riemannian manifold $(N,g_{N})$. Then the following assertions are equivalent to each other;
\begin{enumerate}
\item [(i)] $(ker\pi_{*})^\perp$ defines a totally geodesic foliation on $M$.
\item [(ii)]$\begin{aligned}[t]
-\frac{1}{\lambda^{2}}g_{N}(\nabla^{\pi}_{X}\pi_{*}\mathcal{C}Y,\pi_{*}\phi V)=g_{M}(A_{X}BY-\mathcal{C}Y(\ln\lambda)X+g_M(X,\mathcal{C}Y)\ln\lambda-\eta(Y)X,\phi V)
\end{aligned}$
\end{enumerate}
for $X,Y\in\Gamma((ker\pi_{*})^\perp)$ and $V\in\Gamma(ker\pi_{*})$.
\end{theorem}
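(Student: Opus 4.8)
The plan is to translate the totally geodesic foliation condition into a pointwise metric identity and then reduce to a computation that is essentially the non-antisymmetrized version of the one carried out in Theorem \ref{teo1}. Recall that $(ker\pi_{*})^\perp$ defines a totally geodesic foliation on $M$ if and only if $\nabla_{X}Y\in\Gamma((ker\pi_{*})^\perp)$ for all $X,Y\in\Gamma((ker\pi_{*})^\perp)$, equivalently $g_{M}(\nabla_{X}Y,V)=0$ for all such $X,Y$ and every $V\in\Gamma(ker\pi_{*})$. So the whole argument comes down to computing $g_{M}(\nabla_{X}Y,V)$ and identifying the resulting expression with the right-hand side of (ii).

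First I would start from (\ref{e.q:3.8}), the identity $g_{M}(\nabla_{X}Y,V)=g_{M}(\nabla_{X}\phi Y,\phi V)-\eta(Y)g_{M}(X,\phi V)$, which is already available as a consequence of the Sasakian relations (\ref{e.q:2.3}) together with the compatibility (\ref{e.q:2.2}). Substituting the decomposition $\phi Y=\mathcal{B}Y+\mathcal{C}Y$ from (\ref{e.q:3.2}) splits the first term as $g_{M}(\nabla_{X}\mathcal{B}Y,\phi V)+g_{M}(\nabla_{X}\mathcal{C}Y,\phi V)$. For the vertical component $\mathcal{B}Y\in\Gamma(ker\pi_{*})$, since $\phi V$ is horizontal, (\ref{nxv}) gives $g_{M}(\nabla_{X}\mathcal{B}Y,\phi V)=g_{M}(A_{X}\mathcal{B}Y,\phi V)$. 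For the horizontal component $\mathcal{C}Y\in\Gamma(\mu)$, using that $\pi$ is a conformal submersion one has $g_{M}(\nabla_{X}\mathcal{C}Y,\phi V)=\frac{1}{\lambda^{2}}g_{N}(\pi_{*}(\nabla_{X}\mathcal{C}Y),\pi_{*}\phi V)$; then I would write $\pi_{*}(\nabla_{X}\mathcal{C}Y)=\nabla^{\pi}_{X}\pi_{*}\mathcal{C}Y-(\nabla\pi_{*})(X,\mathcal{C}Y)$ via (\ref{nfixy}) and expand $(\nabla\pi_{*})(X,\mathcal{C}Y)$ by Lemma \ref{lem1}(i). The summand carrying $X(\ln\lambda)\,g_{M}(\mathcal{C}Y,\phi V)$ vanishes by (\ref{cywv}), and, keeping in mind that $\pi_{*}$ annihilates the vertical part of $grad\ln\lambda$ so only $\mathcal{H}grad\ln\lambda$ survives, collecting the remaining terms yields
\[
g_{M}(\nabla_{X}Y,V)=g_{M}(A_{X}\mathcal{B}Y-\mathcal{C}Y(\ln\lambda)X+g_{M}(X,\mathcal{C}Y)\mathcal{H}grad\ln\lambda-\eta(Y)X,\phi V)+\frac{1}{\lambda^{2}}g_{N}(\nabla^{\pi}_{X}\pi_{*}\mathcal{C}Y,\pi_{*}\phi V).
\]
Imposing $g_{M}(\nabla_{X}Y,V)=0$ and rearranging is precisely assertion (ii), which establishes $(i)\Leftrightarrow(ii)$.

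Since the underlying computation is the non-antisymmetrized analogue of Theorem \ref{teo1}, no genuinely new idea is needed; the only point that demands care is the bookkeeping of horizontal versus vertical components — in particular ensuring that only $\mathcal{H}grad\ln\lambda$ enters once $\pi_{*}$ is applied — and checking that the $\mathcal{C}Y(\ln\lambda)$ and $g_{M}(X,\mathcal{C}Y)$ terms come out with exactly the signs displayed in (ii).
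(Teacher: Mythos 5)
Your proposal is correct and follows essentially the same route as the paper's proof: starting from identity (\ref{e.q:3.8}), splitting $\phi Y=\mathcal{B}Y+\mathcal{C}Y$, handling the vertical part via $A_{X}\mathcal{B}Y$ and the $\mu$-part via the conformality relation together with (\ref{nfixy}), Lemma \ref{lem1}(i) and (\ref{cywv}). Your explicit writing of $g_{M}(X,\mathcal{C}Y)\,\mathcal{H}grad\ln\lambda$ is in fact a more careful rendering of the shorthand the paper uses in assertion (ii).
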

\begin{proof}
 By using (\ref{e.q:2.2}), (\ref{nxv}), (\ref{nxy}), (\ref{e.q:3.1}), (\ref{e.q:3.2}) and (\ref{e.q:3.8}), have
\begin{align*}
g_{M}(\nabla_{X}Y,V)&=g_{M}(A_{X}BY,\phi V)+g_{M}(\nabla_{X}\mathcal{C}Y,\phi V)-\eta(Y)g_{M}(X,\phi V)
\end{align*}
for $X,Y\in\Gamma((ker\pi_{*})^\perp)$ and $V\in\Gamma(ker\pi_{*})$. Since $\pi$ is a conformal submersion, using (\ref{nfixy}) and Lemma (\ref{lem1}) we arrive at
\begin{align*}
g_{M}(\nabla_{X}Y,V)&=g_{M}(A_{X}BY,\phi V)-\frac{1}{\lambda^2}g_{M}(\mathcal{H}grad\ln\lambda,X)g_{N}(\pi_{*}\mathcal{C}Y,\pi_{*}\phi V)\\
&-\frac{1}{\lambda^2}g_{M}(\mathcal{H}grad\ln\lambda,\mathcal{C}Y)g_{N}(\pi_{*}X,\pi_{*}\phi V)\\
&+\frac{1}{\lambda^2}g_{M}(X,\mathcal{C}Y)g_{N}(\pi_{*}(\mathcal{H}grad\ln\lambda),\pi_{*}\phi V)\\
&+\frac{1}{\lambda^2}g_{N}(\nabla^{\pi}_X\pi_{*}\mathcal{C}Y,\pi_{*}\phi V)-\eta(Y)g_{M}(X,\phi V).
\end{align*}
Moreover, using Definiton \ref{def} and (\ref{cywv}) we obtain
\begin{align*}
g_{M}(\nabla_{X}Y,V)&=g_{M}(A_{X}BY-\mathcal{C}Y(\ln\lambda)X+g_M(X,\mathcal{C}Y)\ln\lambda-\eta(Y)X,\phi V)\\
&+\frac{1}{\lambda^2}g_{N}(\nabla_{\pi_{*}X}\pi_{*}\mathcal{C}Y,\pi_{*}\phi V)
\end{align*}
which tells that $(i)\Leftrightarrow(ii)$.
\end{proof}
From Theorem \ref{teo2}, we also deduce the following characterization.
\begin{theorem}
Let $\pi$ be a conformal anti-invariant $\xi^\perp-$submersion from a Sasakian manifold $(M,\phi,\xi,\eta,g_{M})$
 onto a Riemannian manifold $(N,g_{N})$. Then any two conditions below imply the third;
\begin{enumerate}
\item [(i)] $(ker\pi_{*})^\perp$ defines a totally geodesic foliation on $M$.
\item [(ii)] $\pi$ is a horizontally homothetic submersion.
\item [(iii)] $g_{N}(\nabla^{\pi}_X\pi_{*}\mathcal{C}Y,\pi_{*}\phi V)=\lambda^{2}g_{M}(-A_{X}\mathcal{B}Y+\eta(Y)X,\phi V)$
\end{enumerate}
for $X,Y\in\Gamma((ker\pi_{*})^\perp)$ and $V\in\Gamma(ker\pi_{*})$.
\end{theorem}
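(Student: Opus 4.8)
The plan is to reuse the identity established inside the proof of Theorem~\ref{teo2}: for all $X,Y\in\Gamma((ker\pi_{*})^\perp)$ and $V\in\Gamma(ker\pi_{*})$,
\begin{align*}
g_{M}(\nabla_{X}Y,V)&=g_{M}(A_{X}\mathcal{B}Y-\mathcal{C}Y(\ln\lambda)X+g_{M}(X,\mathcal{C}Y)\ln\lambda-\eta(Y)X,\phi V)\\
&\quad+\frac{1}{\lambda^{2}}g_{N}(\nabla^{\pi}_{X}\pi_{*}\mathcal{C}Y,\pi_{*}\phi V),
\end{align*}
where the two dilation terms unwind as $-g_{M}(\mathcal{H}grad\ln\lambda,\mathcal{C}Y)\,g_{M}(X,\phi V)+g_{M}(X,\mathcal{C}Y)\,g_{M}(\mathcal{H}grad\ln\lambda,\phi V)$. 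Since $(ker\pi_{*})^\perp$ defines a totally geodesic foliation on $M$ exactly when $g_{M}(\nabla_{X}Y,V)=0$ for all such $X,Y,V$, I would obtain each of the three implications by feeding two of the hypotheses into this identity and reading off the third.

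The two implications that have $(ii)$ among their hypotheses are immediate. If $\pi$ is horizontally homothetic then $\mathcal{H}grad\lambda=0$, so the dilation terms drop out and the identity reduces to $g_{M}(\nabla_{X}Y,V)=g_{M}(A_{X}\mathcal{B}Y-\eta(Y)X,\phi V)+\frac{1}{\lambda^{2}}g_{N}(\nabla^{\pi}_{X}\pi_{*}\mathcal{C}Y,\pi_{*}\phi V)$. Assuming in addition $(i)$, the left-hand side vanishes and the remaining equation is precisely $(iii)$; assuming in addition $(iii)$, the right-hand side vanishes, so $g_{M}(\nabla_{X}Y,V)=0$ for all $X,Y\in\Gamma((ker\pi_{*})^\perp)$ and $V\in\Gamma(ker\pi_{*})$, which is $(i)$.

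The substantive case is $(i)\wedge(iii)\Rightarrow(ii)$. First I would use $(i)$ to annihilate the left-hand side and substitute $(iii)$ into the $\pi$-term; the $A_{X}\mathcal{B}Y$ and $\eta(Y)X$ contributions then cancel, leaving
\begin{equation*}
-g_{M}(\mathcal{H}grad\ln\lambda,\mathcal{C}Y)\,g_{M}(X,\phi V)+g_{M}(X,\mathcal{C}Y)\,g_{M}(\mathcal{H}grad\ln\lambda,\phi V)=0
\end{equation*}
for all $X,Y\in\Gamma((ker\pi_{*})^\perp)$ and $V\in\Gamma(ker\pi_{*})$. Putting $X=\phi V$, using (\ref{cywv}) together with $g_{M}(\phi V,\phi V)=g_{M}(V,V)$ (from (\ref{e.q:2.2}) and $\eta(V)=0$), kills the second term and forces $g_{M}(\mathcal{H}grad\ln\lambda,\mathcal{C}Y)=0$; letting $Y$ run over $\Gamma((ker\pi_{*})^\perp)$ then shows $\mathcal{H}grad\ln\lambda$ is orthogonal to $\mu$. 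Feeding this back, the displayed relation reduces to $g_{M}(X,\mathcal{C}Y)\,g_{M}(\mathcal{H}grad\ln\lambda,\phi V)=0$, and choosing $X=\mathcal{C}Y$ with $\mathcal{C}Y\neq0$ gives $g_{M}(\mathcal{H}grad\ln\lambda,\phi V)=0$ for every $V$, i.e. $\mathcal{H}grad\ln\lambda$ is also orthogonal to $\phi(ker\pi_{*})$. By the decomposition (\ref{e.q:3.1}) this yields $\mathcal{H}grad\ln\lambda=0$, hence $\mathcal{H}grad\lambda=0$, which is $(ii)$.

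I expect the last step to be the only real obstacle: after the cancellation one is left with a single scalar relation bilinear in $X$ and $V$, and the key is to pick the two test fields $X=\phi V$ and $X=\mathcal{C}Y$ so as to decouple the orthogonality of $grad\ln\lambda$ to $\mu$ from its orthogonality to $\phi(ker\pi_{*})$. This is entirely parallel to the corresponding step in the proof of the preceding theorem (the integrability/horizontal-homotheticity trichotomy), so no genuinely new difficulty arises; the first two implications are routine substitutions.
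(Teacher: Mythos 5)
Your proposal is correct and follows essentially the same route as the paper: it starts from the identity established in the proof of Theorem~\ref{teo2}, notes that the two implications involving (ii) are immediate, and for $(i)\wedge(iii)\Rightarrow(ii)$ reduces to the same bilinear relation (\ref{e.q:3.10}) and tests it with the same two choices $X=\phi V$ and $X=\mathcal{C}Y$ to kill the gradient on $\mu$ and on $\phi(ker\pi_{*})$ respectively. Your write-up is in fact slightly more explicit than the paper's (which dismisses the easy implications with ``similarly''), but there is no substantive difference in method.
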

\begin{proof}
For $X,Y\in\Gamma((ker\pi_{*})^\perp)$ and $V\in\Gamma(ker\pi_{*})$, from Theorem \ref{teo2}, we have
\begin{align*}
g_{M}(\nabla_{X}Y,V)&=g_{M}(A_{X}BY-\mathcal{C}Y(\ln\lambda)X+g_M(X,\mathcal{C}Y)\ln\lambda-\eta(Y)X,\phi V)\\
&+\frac{1}{\lambda^2}g_{N}(\nabla^{\pi}_X\pi_{*}\mathcal{C}Y,\pi_{*}\phi V).
\end{align*}
Now, if we have (i) and (iii), then we obtain
\begin{equation}
-g_{M}(\mathcal{H}grad\ln\lambda,\mathcal{C}Y)g_{M}(X,\phi V)+g_{M}(\mathcal{H}grad\ln\lambda,\phi V)g_{M}(X,\mathcal{C}Y)=0. \label{e.q:3.10}
\end{equation}
Now, taking $X=\mathcal{C}Y)$ in (\ref{e.q:3.10}) and using (\ref{cywv}), we get
$g_{M}(\mathcal{H}grad\ln\lambda,\phi V)g_{M}(X,\mathcal{C}Y)=0.$ Hence, $\lambda$ is a constant on $\Gamma(\phi ker\pi_{*})$. On the other hand, taking $X=\phi V$ in (\ref{e.q:3.10}) and using (\ref{cywv}) we derive
\begin{align*}
g_{M}(\mathcal{H}grad\ln\lambda,\mathcal{C}Y)g_{M}(\phi V, \phi V))&=g_{M}(\mathcal{H}grad\ln\lambda,\mathcal{C}Y)\{g_{M}(V,V)-\eta(V)\eta(V)\}\\
&=g_{M}(\mathcal{H}grad\ln\lambda,\mathcal{C}Y)g_{M}(V,V)=0.
\end{align*}
From above equation, $\lambda$ is a constant on $\Gamma(\mu)$. Similarly, one can obtain the other assertions.
\end{proof}
In particular, as an analogue of a conformal Lagrangian submersion in \cite{As}, we have the following corollary.
\begin{corollary}\label{cor1}
Let $\pi$ be a conformal anti-invariant $\xi^\perp-$submersion from a Sasakian manifold $(M,\phi,\xi,\eta,g_{M})$ onto a Riemannian manifold $(N,g_{N})$ with
$(ker\pi_{*})^{\perp}=\phi(ker\pi_{*})\oplus<\xi>$. Then the following assertions are equivalent to each other;
\begin{enumerate}
\item [(i)] $(ker\pi_{*})^\perp$ defines a totally geodesic foliation on $M$.
\item [(ii)] $A_{X}\mathcal{B}Y=\eta(Y)X$
\item [(iii)]$(\nabla \pi_{*})(X,\phi V)=-\eta(Y)\pi_{*}X$
\end{enumerate}
for $X,Y\in\Gamma((ker\pi_{*})^\perp)$ and $V\in\Gamma(ker\pi_{*})$.
\end{corollary}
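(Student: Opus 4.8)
The plan is to read the corollary off Theorem~\ref{teo2} after exploiting the extra hypothesis $(ker\pi_{*})^{\perp}=\phi(ker\pi_{*})\oplus\langle\xi\rangle$. By the Remark recorded earlier, under this hypothesis $\mathcal{C}X=0$ for every $X\in\Gamma((ker\pi_{*})^{\perp})$, so that $\phi X=\mathcal{B}X$ is vertical whenever $X$ is horizontal. Consequently each term of Theorem~\ref{teo2}(ii) carrying a factor $\mathcal{C}Y$ — namely $\nabla^{\pi}_{X}\pi_{*}\mathcal{C}Y$, $\mathcal{C}Y(\ln\lambda)X$ and $g_{M}(X,\mathcal{C}Y)\ln\lambda$ — vanishes identically, and Theorem~\ref{teo2} reduces to the statement that (i) holds if and only if
\[
g_{M}(A_{X}\mathcal{B}Y-\eta(Y)X,\phi V)=0
\]
for all $X,Y\in\Gamma((ker\pi_{*})^{\perp})$ and $V\in\Gamma(ker\pi_{*})$.

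The next step is to promote this pairing identity to the pointwise identity~(ii). Since $A_{X}$ sends vertical fields to horizontal ones by~\eqref{nxv}, both $A_{X}\mathcal{B}Y$ and $\eta(Y)X$ are horizontal, so their difference lies in $(ker\pi_{*})^{\perp}=\phi(ker\pi_{*})\oplus\langle\xi\rangle$; the displayed identity says precisely that this difference is orthogonal to $\phi(ker\pi_{*})$. To conclude the full identity~(ii) one must then also treat its $\xi$-component, and this is where the Sasakian structure is used: from $\nabla_{X}\xi=\phi X$, the compatibility relation~\eqref{e.q:2.2} and the Sasakian formulas~\eqref{e.q:2.3}, together with the skew-symmetry of $A_{X}$ and~\eqref{axxi}, one analyses $g_{M}(A_{X}\mathcal{B}Y-\eta(Y)X,\xi)$ and closes the equivalence (i)$\Leftrightarrow$(ii).

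For (ii)$\Leftrightarrow$(iii) I would transfer the identity through $\pi_{*}$. Because $\pi$ is a horizontally conformal submersion, at each point $p$ the map $\pi_{*}$ restricts to a linear isomorphism of $(ker\pi_{*})^{\perp}_{p}$ onto $T_{\pi(p)}N$, so an equality of horizontal vector fields is equivalent to equality of their $\pi_{*}$-images. Applying Lemma~\ref{lem1}(iii) to the vertical field $\phi Y=\mathcal{B}Y$ gives $(\nabla\pi_{*})(X,\phi Y)=-\pi_{*}(A_{X}\mathcal{B}Y)$, while $\pi_{*}(\eta(Y)X)=\eta(Y)\pi_{*}X$; comparing these and using injectivity of $\pi_{*}$ on the horizontal distribution turns~(iii) into~(ii) and back. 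The main obstacle is the middle step: the computation behind Theorem~\ref{teo2} only ever tests against $\phi V$, so the $\xi$-direction has to be handled by hand, and correctly accounting for the $\eta(Y)X$ term and the contribution of $\nabla_{X}\xi=\phi X$ there is the only genuinely delicate point; everything else is routine substitution.
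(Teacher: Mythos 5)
Your overall route is the same as the paper's: the paper offers no separate proof of this corollary, it simply reads it off Theorem~\ref{teo2} after the Remark that $\mathcal{C}X=0$ when $(ker\pi_{*})^{\perp}=\phi(ker\pi_{*})\oplus\langle\xi\rangle$, exactly as you do, and your (ii)$\Leftrightarrow$(iii) argument via Lemma~\ref{lem1}(iii) and injectivity of $\pi_{*}$ on the horizontal distribution is correct (modulo the typo in (iii), where $\phi V$ and $\eta(Y)$ should both refer to the same $Y$, with $\phi Y=\mathcal{B}Y$ vertical).

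The genuine gap is the step you defer with ``one analyses $g_{M}(A_{X}\mathcal{B}Y-\eta(Y)X,\xi)$ and closes the equivalence.'' If you actually carry out that computation it does \emph{not} close: by skew-symmetry of $A_{X}$ and \eqref{axxi},
\begin{equation*}
g_{M}(A_{X}\mathcal{B}Y,\xi)=-g_{M}(\mathcal{B}Y,A_{X}\xi)=g_{M}(\mathcal{B}Y,\mathcal{B}X)=g_{M}(\phi X,\phi Y)=g_{M}(X,Y)-\eta(X)\eta(Y),
\end{equation*}
so that $g_{M}(A_{X}\mathcal{B}Y-\eta(Y)X,\xi)=g_{M}(X,Y)-2\eta(X)\eta(Y)$, which is nonzero already for $X=Y=\phi V$ with $V$ a unit vertical field. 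Hence the condition extracted from Theorem~\ref{teo2}, namely $g_{M}(A_{X}\mathcal{B}Y-\eta(Y)X,\phi V)=0$ for all $V$, controls only the $\phi(ker\pi_{*})$--component and is strictly weaker than the pointwise identity $A_{X}\mathcal{B}Y=\eta(Y)X$; the latter is in fact never attainable in the $\xi$--direction. So (i)$\Leftrightarrow$(ii) holds only if (ii) is read as an equality of $\phi(ker\pi_{*})$--components (which is evidently what the authors intend, since they obtain it by pairing against $\phi V$), whereas your (ii)$\Leftrightarrow$(iii) step uses the full pointwise identity. You correctly isolated the delicate point, but the resolution you sketch is not available; the honest conclusion is that the $\xi$--component must be discarded from (ii) (and correspondingly from (iii)), a defect the paper passes over in silence by giving no proof.
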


In the sequel we are going to investigate the geometry of leaves of the distribution $ker\pi_{*}$.
\begin{theorem}\label{teo3}
Let $\pi$ be a conformal anti-invariant $\xi^\perp-$submersion from a Sasakian manifold $(M,\phi,\xi,\eta,g_{M})$ onto a Riemannian manifold $(N,g_{N})$. Then the following assertions are equivalent to each other;
\begin{enumerate}
\item [(i)] $ker\pi_{*}$ defines a totally geodesic foliation on $M$.
\item [(ii)]$\begin{aligned}[t]
-\frac{1}{\lambda^2}g_{N}(\nabla^{\pi}_{\phi W}\pi_{*}\phi V,\pi_{*}\phi\mathcal{C}X)&=g_{M}(\phi\mathcal{C}X(\ln\lambda)\phi V-T_{V}\mathcal{B}X, \phi V)+\eta(\nabla_{\phi W}V)\eta(\mathcal{C}X)
\end{aligned}$
\end{enumerate}
for $V,W\in\Gamma(ker\pi_{*})$ and $X\in\Gamma((ker\pi_{*})^\perp)$.
\end{theorem}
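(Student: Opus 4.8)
The plan is to use the definition of a totally geodesic foliation: $ker\pi_{*}$ defines one on $M$ if and only if $g_{M}(\nabla_{V}W,X)=0$ for all $V,W\in\Gamma(ker\pi_{*})$ and $X\in\Gamma((ker\pi_{*})^{\perp})$, and then to rewrite $g_{M}(\nabla_{V}W,X)$ using the Sasakian relations (\ref{e.q:2.2}), (\ref{e.q:2.3}) and the conformality of $\pi$ until only objects seen by $\pi_{*}$ remain. Since $W\in\Gamma(ker\pi_{*})$ forces $\eta(W)=0$, and since $\eta\circ\phi=0$, combining $g_{M}(\phi A,\phi B)=g_{M}(A,B)-\eta(A)\eta(B)$ with the formula for $(\nabla_{X}\phi)Y$ and with $\nabla_{V}\xi=\phi V$ gives
\begin{equation*}
g_{M}(\nabla_{V}W,X)=g_{M}(\nabla_{V}\phi W,\phi X),
\end{equation*}
because $g_{M}(\xi,\phi X)=0$ removes the $\xi$-part of $X$ and $\eta(\nabla_{V}W)=-g_{M}(W,\phi V)=0$. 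Then I would substitute $\phi X=\mathcal{B}X+\mathcal{C}X$ from (\ref{e.q:3.2}), split $\nabla_{V}\phi W$ by (\ref{nvx}) into its horizontal part $\mathcal{H}\nabla_{V}\phi W$ and vertical part $T_{V}\phi W$, pair these against $\mathcal{C}X\in\Gamma(\mu)$ and $\mathcal{B}X\in\Gamma(ker\pi_{*})$ respectively, and use the skew-symmetry of $T_{V}$ to arrive at
\begin{equation*}
g_{M}(\nabla_{V}W,X)=-g_{M}(T_{V}\mathcal{B}X,\phi W)+g_{M}(\mathcal{H}\nabla_{V}\phi W,\mathcal{C}X).
\end{equation*}

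The second summand carries all of the work, and it is where the dilation enters. I would write $\mathcal{H}\nabla_{V}\phi W=A_{\phi W}V$ (legitimate since $\phi W$ may be taken basic and the quantity is tensorial), use that $A_{\phi W}$ is skew-symmetric, and then use $\mathcal{C}X=-\phi(\phi\mathcal{C}X)+\eta(\mathcal{C}X)\xi$, which follows from (\ref{e.q:2.2}) and in which $\phi\mathcal{C}X\in\Gamma(\mu)$ is again horizontal. A second application of the formula for $(\nabla_{X}\phi)Y$, of $\nabla_{\phi W}\xi=\phi^{2}W=-W$ (which in particular yields $g_{M}(V,W)=\eta(\nabla_{\phi W}V)$), and of the skew-symmetry of $\phi$ then rewrites $g_{M}(\mathcal{H}\nabla_{V}\phi W,\mathcal{C}X)$ as
\begin{equation*}
-g_{M}(\phi V,\mathcal{H}\nabla_{\phi W}\phi\mathcal{C}X)+\eta(\mathcal{C}X)\,g_{M}(V,W),
\end{equation*}
where now $\phi W$ and $\phi\mathcal{C}X$ are both horizontal, so that the conformality relation $\lambda^{2}g_{M}(\cdot,\cdot)=g_{N}(\pi_{*}\cdot,\pi_{*}\cdot)$ becomes available.

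It remains to bring $g_{M}(\phi V,\mathcal{H}\nabla_{\phi W}\phi\mathcal{C}X)$ down to $N$. Using $\nabla_{\phi W}\phi\mathcal{C}X=\mathcal{H}\nabla_{\phi W}\phi\mathcal{C}X+A_{\phi W}\phi\mathcal{C}X$ from (\ref{nxy}), then (\ref{nfixy}) and Lemma \ref{lem1}(i) — whose cross term drops because $g_{M}(\phi W,\phi\mathcal{C}X)=g_{M}(W,\mathcal{C}X)=0$ — and finally that $g_{N}$ is parallel for the pull-back connection together with $g_{M}(\phi V,\phi\mathcal{C}X)=g_{M}(V,\mathcal{C}X)=0$, I obtain
\begin{equation*}
g_{M}(\phi V,\mathcal{H}\nabla_{\phi W}\phi\mathcal{C}X)=-\frac{1}{\lambda^{2}}g_{N}(\nabla^{\pi}_{\phi W}\pi_{*}\phi V,\pi_{*}\phi\mathcal{C}X)-(\phi\mathcal{C}X)(\ln\lambda)\,g_{M}(V,W).
\end{equation*}

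Substituting this back, collecting terms, and rearranging with $g_{M}(V,W)=g_{M}(\phi V,\phi W)=\eta(\nabla_{\phi W}V)$ turns $g_{M}(\nabla_{V}W,X)=0$ into assertion (ii); since every step above is reversible, the converse is immediate, so (i)$\Leftrightarrow$(ii). The one real difficulty I anticipate is the bookkeeping: each use of the formula for $(\nabla_{X}\phi)Y$ produces a matched pair $-g_{M}(\cdot,\cdot)\xi$ and $\eta(\cdot)(\cdot)$, so after two applications one must check that precisely the scalar $\eta(\nabla_{\phi W}V)\eta(\mathcal{C}X)$ and the stated $\ln\lambda$-term survive, with no leftover $\xi$-contribution, while keeping the vertical/horizontal decompositions and the distinct occurrences of $\phi V$ and $\phi W$ carefully separated.
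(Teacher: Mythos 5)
Your proposal is correct and takes essentially the same route as the paper's proof: both reduce $g_{M}(\nabla_{V}W,X)$ via the Sasakian identities and the vanishing of $\eta(\nabla_{V}W)$ to the sum of a $T_{V}\mathcal{B}X$-term, the term $\eta(\nabla_{\phi W}V)\eta(\mathcal{C}X)$, and $g_{M}(\nabla_{\phi W}\phi V,\phi\mathcal{C}X)$, and then convert this last term using conformality together with Lemma \ref{lem1}(i). The only difference is cosmetic: you apply Lemma \ref{lem1}(i) to the pair $(\phi W,\phi\mathcal{C}X)$ and flip back via metric-compatibility of the pullback connection, whereas the paper applies it directly to $(\phi W,\phi V)$; since $g_{M}(\phi V,\phi\mathcal{C}X)=g_{M}(\phi W,\phi\mathcal{C}X)=0$, both computations collapse to the same identity.
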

\begin{proof}
Since $g_{M}(W,\xi)=0,$ using (\ref{e.q:2.3}) we have $g_{M}(\nabla_{V}W,\xi)=-g_{M}(W,\nabla_{V}\xi)=-g_{M}(W,\phi V)=0$
for $V,W\in\Gamma(ker\pi_{*})$ and $\xi\in\Gamma((ker\pi_{*})^\perp).$  Thus we have
\begin{align*}
g_{M}(\nabla_{V}W,X)&=g_{M}(\phi\nabla_{V}W,\phi X)+\eta(\nabla_{V}W)\eta(X)\\
&=g_{M}(\phi\nabla_{V}\phi W,\phi X)\\
&=g_{M}(\nabla_{V}\phi W,\phi X)-g_{M}((\nabla_{V}\phi)W,\phi X).
\end{align*}
Using (\ref{e.q:2.3}), (\ref{nvw}) and (\ref{e.q:3.2}) we have
\begin{equation*}
g_{M}(\nabla_{V}W,X)=g_{M}(T_{V}\phi W,\mathcal{B}X)+g_{M}(\mathcal{H}\nabla_{V}\phi W,\mathcal{C}X).
\end{equation*}
Since $\nabla$ is torsion free and $[V,\phi W]\in\Gamma(ker\pi_{*})$ we obtain
\begin{equation*}
g_{M}(\nabla_{V}W,X)=g_{M}(T_{V}\phi W,\mathcal{B}X)+g_{M}(\nabla_{\phi W}V,\mathcal{C}X).
\end{equation*}
Using (\ref{e.q:2.3}) and (\ref{nxy}) we have
\begin{align*}
g_{M}(\nabla_{V}W,X)&=g_{M}(T_{V}\phi W,\mathcal{B}X)+g_{M}(\phi\nabla_{\phi W}V,\phi\mathcal{C}X)+\eta(\nabla_{\phi W}V)\eta(\mathcal{C}X)\\
&=g_{M}(T_{V}\phi W,\mathcal{B}X)+g_{M}(\nabla_{\phi W}\phi V,\phi\mathcal{C}X)+\eta(\nabla_{\phi W}V)\eta(\mathcal{C}X)
\end{align*}
here we have used that $\mu$ is invariant. Using (\ref{nfixy}) and Lemma \ref{lem1} (i) and if we take into account
that $\pi$ is a conformal submersion, we obtain
\begin{align*}
g_{M}(\nabla_{U}V,X)&=g_{M}(T_{V}\phi W,\mathcal{B}X)+\frac{1}{\lambda^2}g_{M}(\mathcal{H}gradln\lambda,\phi W)g_{N}(\pi_{*}\phi V,\pi_{*}\phi\mathcal{C}X)\\
&-\frac{1}{\lambda^2}g_{M}(\mathcal{H}gradln\lambda,\phi V)g_{N}(\pi_{*}\phi W,\pi_{*}\phi\mathcal{C}X)\\
&+g_{M}(\phi W,\phi V)\frac{1}{\lambda^2}g_{N}(\pi_{*}(\mathcal{H}gradln\lambda),\pi_{*}\phi\mathcal{C}X)\\
&+\frac{1}{\lambda^2}g_{N}(\nabla_{\pi_{*}\phi W}\pi_{*}\phi V,\pi_{*}\phi\mathcal{C}X)+\eta(\nabla_{\phi W}V)\eta(\mathcal{C}X).
\end{align*}
Moreover, using Definition \ref{def} and (\ref{cywv}), we obtain
\begin{align*}
g_{M}(\nabla_{U}V,X)&=g_{M}(\phi\mathcal{C}X(\ln\lambda)\phi V-T_{V}\mathcal{B}X, \phi V)+\eta(\nabla_{\phi W}V)\eta(\mathcal{C}X)\\
&+\frac{1}{\lambda^2}g_{N}(\nabla^{\pi}_{\phi W}\pi_{*}\phi V,\pi_{*}\phi\mathcal{C}X)
\end{align*}
which tells that $(i)\Leftrightarrow (ii)$.
\end{proof}
From Theorem \ref{teo3}, we deduce the following result.
\begin{theorem}
Let $\pi$ be a conformal anti-invariant $\xi^\perp-$submersion from a Sasakian manifold $(M,\phi,\xi,\eta,g_{M})$ onto a Riemannian manifold $(N,g_{N})$. Then any two conditions below imply the third;
\begin{enumerate}
\item [(i)] $ker\pi_{*}$ defines a totally geodesic foliation on $M$.
\item [(ii)] $\lambda$ is a constant on $\Gamma(\mu)$.
\item [(iii)]$\begin{aligned}[t]
-\frac{1}{\lambda^2}g_{N}(\nabla^{\pi}_{\phi W}\pi_{*}\phi V,\pi_{*}\phi\mathcal{C}X)&=g_{M}(T_{V}\phi W,\mathcal{B}X)+\eta(\nabla_{\phi W}V)\eta(\mathcal{C}X)
\end{aligned}$
\end{enumerate}
for $V,W\in\Gamma(ker\pi_{*})$ and $X\in\Gamma((ker\pi_{*})^\perp)$.
\end{theorem}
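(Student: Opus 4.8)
The plan is not to compute anything new but to harvest the identity already established inside the proof of Theorem \ref{teo3}. There, just before the final rearrangement, one arrives at
\begin{align*}
g_{M}(\nabla_{V}W,X)&=g_{M}(T_{V}\phi W,\mathcal{B}X)+g_{M}(\phi W,\phi V)\,\phi\mathcal{C}X(\ln\lambda)\\
&\quad+\frac{1}{\lambda^{2}}g_{N}(\nabla^{\pi}_{\phi W}\pi_{*}\phi V,\pi_{*}\phi\mathcal{C}X)+\eta(\nabla_{\phi W}V)\eta(\mathcal{C}X)
\end{align*}
for all $V,W\in\Gamma(ker\pi_{*})$ and $X\in\Gamma((ker\pi_{*})^\perp)$, the middle summand being precisely the one carrying $\mathcal{H}grad\ln\lambda$ (here I would use $g_{M}(\phi W,\phi V)=g_{M}(W,V)$, since $\eta$ vanishes on $ker\pi_{*}$). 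Since $ker\pi_{*}$ defines a totally geodesic foliation exactly when the left-hand side vanishes for all such $V,W,X$, the whole statement reduces to a dictionary between the hypotheses and the summands on the right.

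I would set up that dictionary as follows. As $\phi\mathcal{C}X\in\Gamma(\mu)$ and $\mu$ is $\phi$-invariant, condition (ii) --- $\lambda$ constant on $\Gamma(\mu)$ --- is equivalent to $\phi\mathcal{C}X(\ln\lambda)=0$ for every horizontal $X$, i.e. to the vanishing of the middle summand; and condition (iii), after moving the $\lambda^{-2}$-term to the other side, is literally the assertion that the three remaining summands add to zero. Granting this, the three implications are immediate. If (ii) and (iii) hold, all four summands vanish and we get (i). If (i) and (ii) hold, the middle summand is zero and the identity collapses to (iii). If (i) and (iii) hold, the identity together with (iii) leaves only $g_{M}(\phi W,\phi V)\,\phi\mathcal{C}X(\ln\lambda)=0$ for all $V,W,X$.

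The only step that is not pure bookkeeping --- and the one I expect to be the mild obstacle --- is deducing (ii) from this last equation. I would take $W=V$ with $V\neq0$, so that $g_{M}(\phi V,\phi V)=g_{M}(V,V)>0$ and hence $\phi\mathcal{C}X(\ln\lambda)=0$ for every horizontal $X$; since $X\mapsto\mathcal{C}X$, and hence $X\mapsto\phi\mathcal{C}X$, sweeps out $\Gamma(\mu)$, this forces $g_{M}(\mathcal{H}grad\ln\lambda,Z)=0$ for all $Z\in\Gamma(\mu)$, which is (ii). The point requiring a little care is exactly this surjectivity for $\phi\circ\mathcal{C}$ --- the direction $\xi\in\mu$ lies in the kernel of $\phi$, so one must check, as in the analogous theorems earlier in this section, that the remaining directions of $\mu$ are genuinely covered --- after which the proof is complete.
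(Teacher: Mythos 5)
Your proposal is correct and follows essentially the same route as the paper: both start from the identity obtained in the proof of Theorem \ref{teo3} and read off each implication by matching the hypotheses to the individual summands, the only nontrivial step being (i)$+$(iii)$\Rightarrow$(ii) via $g_{M}(\phi V,\phi V)\,\phi\mathcal{C}X(\ln\lambda)=0$. The subtlety you flag at the end is real: since $\phi\mathcal{C}X=\phi^{2}X=-X+\eta(X)\xi$ for $X\in\Gamma(\mu)$, the image of $\phi\circ\mathcal{C}$ misses the $\xi$-direction, so this argument only gives constancy of $\lambda$ on the part of $\mu$ orthogonal to $\xi$; the paper's own proof asserts the full conclusion ``$\lambda$ is a constant on $\Gamma(\mu)$'' without addressing this point.
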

\begin{proof}
From Theorem (\ref{teo3}) we have
\begin{align*}
g_{M}(\nabla_{U}V,X)&=g_{M}(\phi\mathcal{C}X(\ln\lambda)\phi V-T_{V}\mathcal{B}X, \phi V)+\eta(\nabla_{\phi W}V)\eta(\mathcal{C}X)\\
&+\frac{1}{\lambda^2}g_{N}(\nabla^{\pi}_{\phi W}\pi_{*}\phi V,\pi_{*}\phi\mathcal{C}X)
\end{align*}
for $U,V\in\Gamma(ker\pi_{*})$ and $X\in\Gamma((ker\pi_{*})^\perp).$ Now, if we have (i) and (iii), then we obtain
$$g_{M}(\phi W,\phi V)g_{M}(\mathcal{H}gradln\lambda,\phi\mathcal{C}X)=0.$$ From above equation, $\lambda$ is a constant on $\Gamma(\mu)$. Similarly, one can obtain the other assertions.
\end{proof}
As an analogue of a conformal Lagrangian submersion in \cite{As}, (\ref{e.q:3.3}) implies that $TN=\pi_{*}(\phi ker\pi_{*})$. Hence we have the following.
\begin{corollary}\label{cor2}
Let $\pi$ be a conformal anti-invariant $\xi^\perp-$submersion from a Sasakian manifold $(M,\phi,\xi,\eta,g_{M})$ onto a Riemannian manifold $(N,g_{N})$ with
$(ker\pi_{*})^{\perp}=\phi(ker\pi_{*})\oplus<\xi>$. Then the following assertions are equivalent to each other;
\begin{enumerate}
\item [(i)] $ker\pi_{*}$ defines a totally geodesic foliation on $M$.
\item [(ii)] $T_{V}\phi W=0$
\end{enumerate}
for $V,W\in\Gamma(ker\pi_{*})$ and $X\in\Gamma((ker\pi_{*})^\perp)$.
\end{corollary}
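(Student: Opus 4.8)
The plan is to deduce the corollary from Theorem \ref{teo3} by specializing the decomposition (\ref{e.q:3.1})--(\ref{e.q:3.2}). First I would record the effect of the hypothesis $(ker\pi_*)^\perp=\phi(ker\pi_*)\oplus\langle\xi\rangle$: here the invariant complement $\mu$ of $\phi(ker\pi_*)$ equals $\langle\xi\rangle$, so every $\mathcal{C}X$ is a multiple of $\xi$; applying $\eta$ to (\ref{e.q:3.2}) and using $\eta\circ\phi=0$ from (\ref{e.q:2.1}) together with $\eta(\mathcal{B}X)=g_M(\mathcal{B}X,\xi)=0$ (as $\mathcal{B}X$ is vertical and $\xi$ horizontal) gives $\mathcal{C}X=0$ for all $X\in\Gamma((ker\pi_*)^\perp)$, which is the Remark stated just above. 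Consequently $\pi_*\phi\mathcal{C}X=0$, $\phi\mathcal{C}X(\ln\lambda)=0$ and $\eta(\mathcal{C}X)=0$, so every term of condition (ii) of Theorem \ref{teo3} that carries a factor $\mathcal{C}X$ drops out. Reading one line back into the proof of Theorem \ref{teo3}, where it is shown that $g_M(\nabla_V W,X)=g_M(T_V\phi W,\mathcal{B}X)+g_M(\mathcal{H}\nabla_V\phi W,\mathcal{C}X)$, the identity collapses to
\[
g_M(\nabla_V W,X)=g_M(T_V\phi W,\mathcal{B}X)
\]
for $V,W\in\Gamma(ker\pi_*)$ and $X\in\Gamma((ker\pi_*)^\perp)$; hence $ker\pi_*$ defines a totally geodesic foliation on $M$ if and only if $g_M(T_V\phi W,\mathcal{B}X)=0$ for all such $V,W,X$.

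Second, I would check that $\mathcal{B}X$ exhausts $\Gamma(ker\pi_*)$ as $X$ runs over $\Gamma((ker\pi_*)^\perp)$. Given $U\in\Gamma(ker\pi_*)$, the field $\phi U$ is horizontal since $\phi(ker\pi_*)\subset(ker\pi_*)^\perp$, and $\eta(U)=g_M(U,\xi)=0$ because $\xi$ is horizontal, so (\ref{e.q:2.2}) gives $\phi(\phi U)=-U+\eta(U)\xi=-U$; as $\mathcal{C}(\phi U)=0$ by the previous step, $\mathcal{B}(\phi U)=\phi(\phi U)=-U$. Therefore $\{\mathcal{B}X:X\in\Gamma((ker\pi_*)^\perp)\}=\Gamma(ker\pi_*)$.

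Finally, I would observe that for $V\in\Gamma(ker\pi_*)$ and the horizontal field $\phi W$, the decomposition (\ref{nvx}) (equivalently the defining formula (\ref{T}) for $T$) shows $T_V\phi W=\mathcal{V}\nabla_V\phi W\in\Gamma(ker\pi_*)$. Hence the condition $g_M(T_V\phi W,\mathcal{B}X)=0$ for all $X\in\Gamma((ker\pi_*)^\perp)$ is, by the second step, the same as $g_M(T_V\phi W,U)=0$ for all $U\in\Gamma(ker\pi_*)$, which holds precisely when $T_V\phi W=0$. Combining the three steps yields (i)$\Leftrightarrow$(ii). I do not expect a genuine obstacle: the proof is a routine specialization of Theorem \ref{teo3}, and the only points that require attention are that $\mathcal{B}X$ attains every vertical vector and that $T_V\phi W$ is automatically vertical — both immediate from the structure equations (\ref{e.q:2.1})--(\ref{e.q:2.2}) and the O'Neill identities.
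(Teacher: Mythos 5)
Your proposal is correct and follows exactly the route the paper intends: the corollary is stated as an immediate specialization of Theorem \ref{teo3} once the hypothesis $(ker\pi_{*})^{\perp}=\phi(ker\pi_{*})\oplus\langle\xi\rangle$ forces $\mathcal{C}X=0$ (the Remark preceding Corollary 3.1), which collapses the identity in that proof to $g_{M}(\nabla_{V}W,X)=g_{M}(T_{V}\phi W,\mathcal{B}X)$. Your two supplementary observations --- that $\mathcal{B}$ maps $\Gamma((ker\pi_{*})^{\perp})$ onto $\Gamma(ker\pi_{*})$ via $\mathcal{B}(\phi U)=-U$, and that $T_{V}\phi W$ is vertical --- are exactly the details the paper leaves implicit, and they are verified correctly.
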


Now we obtain necessary and sufficient condition for conformal anti-invariant $\xi^\perp-$submersion to be totally geodesic. We note that a differentiable map $\pi$ between two Riemannian manifolds is called totally geodesic if $\nabla \pi_{*}=0.$ A geometric interpretation of a totally geodesic map is that it maps every geodesic in the total manifold into a geodesic in the base manifold in proportion to arc lengths.
\begin{theorem}
Let $\pi:(M,\phi,\xi,\eta,g_{M})\longrightarrow (N,g_{N})$ be a conformal anti-invariant $\xi^\perp-$submersion, where $(M,\phi,\xi,\eta,g_{M})$ is a Sasakian manifold and $(N,g_{N})$ is a Riemannian manifold. Then $\pi$ is a totally geodesic map if
\begin{align}
-\nabla^{\pi}_{X}\pi_{*}Y_2&=\pi_{*}(\phi(A_{X}\phi Y_{1}+\mathcal{V}\nabla_{X}\mathcal{B}Y_{2} +A_{X}\mathcal{C}Y_{2})+\mathcal{C}(\mathcal{H}\nabla_{X}\phi Y_{1}+A_{X}\mathcal{B}Y_{2}+\mathcal{H}\nabla_{X}\mathcal{C}Y_{2}))\label{e.q:3.11}\\
&-\eta(Y_{2})\pi_{*}X-\{g_{M}(X,\phi Y_1)+g_M(X,\mathcal{C}Y_2)\}\pi_{*}\xi\notag
\end{align}
for any $X\in\Gamma((ker\pi_*)^\perp), Y=Y_{1}+Y_{2}\in\Gamma(TM)$, where $Y_{1}\in\Gamma(ker\pi_{*})$ and $Y_{2}\in\Gamma((ker\pi_{*})^\perp)$.
\end{theorem}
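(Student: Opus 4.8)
The plan is to verify the defining identity $(\nabla\pi_{*})(X,Y)=0$ directly from $(\nabla\pi_{*})(X,Y)=\nabla^{\pi}_{X}\pi_{*}(Y)-\pi_{*}(\nabla^{M}_{X}Y)$ in (\ref{nfixy}). Since $Y_{1}\in\Gamma(ker\pi_{*})$ we have $\pi_{*}Y=\pi_{*}Y_{2}$, so the whole question reduces to computing $\pi_{*}(\nabla^{M}_{X}Y)$ and matching it with the right-hand side of (\ref{e.q:3.11}). I would establish this for $X\in\Gamma((ker\pi_{*})^{\perp})$ and $Y=Y_{1}+Y_{2}\in\Gamma(TM)$ arbitrary, which is the form in which the hypothesis is stated and, by the symmetry of the second fundamental form, accounts for every pair of arguments having a horizontal entry.

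First I would invoke the Sasakian identities (\ref{e.q:2.3}). From $(\nabla_{X}\phi)Y=\nabla_{X}\phi Y-\phi\nabla_{X}Y=-g_{M}(X,Y)\xi+\eta(Y)X$ one obtains $\phi\nabla_{X}Y=\nabla_{X}\phi Y+g_{M}(X,Y)\xi-\eta(Y)X$, and applying $\phi$ once more together with (\ref{e.q:2.2}) and $\phi\xi=0$ gives
\begin{equation*}
\nabla^{M}_{X}Y=-\phi(\nabla^{M}_{X}\phi Y)+\eta(Y)\phi X+\eta(\nabla^{M}_{X}Y)\xi .
\end{equation*}
Next I would decompose $\phi Y$ via (\ref{e.q:3.1}) and (\ref{e.q:3.2}): since $Y=Y_{1}+Y_{2}$, we have $\phi Y=\phi Y_{1}+\mathcal{B}Y_{2}+\mathcal{C}Y_{2}$, where $\phi Y_{1}$ and $\mathcal{C}Y_{2}$ are horizontal while $\mathcal{B}Y_{2}$ is vertical. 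Plugging this into $\nabla^{M}_{X}\phi Y$ and splitting each summand by the O'Neill relations (\ref{nxv}) and (\ref{nxy}) --- for horizontal $X$, $\nabla_{X}\phi Y_{1}=\mathcal{H}\nabla_{X}\phi Y_{1}+A_{X}\phi Y_{1}$, $\nabla_{X}\mathcal{B}Y_{2}=A_{X}\mathcal{B}Y_{2}+\mathcal{V}\nabla_{X}\mathcal{B}Y_{2}$, $\nabla_{X}\mathcal{C}Y_{2}=\mathcal{H}\nabla_{X}\mathcal{C}Y_{2}+A_{X}\mathcal{C}Y_{2}$ --- and then applying $\phi$ and using (\ref{e.q:3.2}) once more, the horizontal component of $\phi(\nabla^{M}_{X}\phi Y)$ comes out to be exactly $\phi(A_{X}\phi Y_{1}+\mathcal{V}\nabla_{X}\mathcal{B}Y_{2}+A_{X}\mathcal{C}Y_{2})+\mathcal{C}(\mathcal{H}\nabla_{X}\phi Y_{1}+A_{X}\mathcal{B}Y_{2}+\mathcal{H}\nabla_{X}\mathcal{C}Y_{2})$, while the leftover $\mathcal{B}(\cdot)$ piece is vertical. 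Since $\pi_{*}$ annihilates $ker\pi_{*}$, applying $\pi_{*}$ to $-\phi(\nabla^{M}_{X}\phi Y)$ reproduces precisely the bracketed $\pi_{*}(\cdot)$ term of (\ref{e.q:3.11}).

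It then remains to treat the two scalar terms. Because $Y_{1}\in\Gamma(ker\pi_{*})$ and $\xi\in\Gamma((ker\pi_{*})^{\perp})$ we have $\eta(Y)=\eta(Y_{2})$. For $\eta(\nabla^{M}_{X}Y)=g_{M}(\nabla^{M}_{X}Y,\xi)$, differentiating $g_{M}(Y,\xi)$ and using $\nabla_{X}\xi=\phi X$ together with the skew-symmetry of $\phi$ and $\phi X=\mathcal{B}X+\mathcal{C}X$ rewrites it as a combination of $\eta(Y_{2})$, $g_{M}(X,\phi Y_{1})$ and $g_{M}(X,\mathcal{C}Y_{2})$; after applying $\pi_{*}$ to $\eta(Y)\phi X$ and $\eta(\nabla^{M}_{X}Y)\xi$ these recombine into the $\eta(Y_{2})\pi_{*}X$ and $\{g_{M}(X,\phi Y_{1})+g_{M}(X,\mathcal{C}Y_{2})\}\pi_{*}\xi$ contributions displayed in (\ref{e.q:3.11}). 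Collecting everything, $\pi_{*}(\nabla^{M}_{X}Y)$ equals the right-hand side of (\ref{e.q:3.11}), whence $(\nabla\pi_{*})(X,Y)=\nabla^{\pi}_{X}\pi_{*}Y_{2}-\pi_{*}(\nabla^{M}_{X}Y)=0$, and evaluating this over a local frame adapted to $ker\pi_{*}\oplus(ker\pi_{*})^{\perp}$ shows that $\pi$ is totally geodesic. The step I expect to be the main obstacle is the horizontal/vertical bookkeeping: deciding correctly which of $\phi Y_{1},\mathcal{B}Y_{2},\mathcal{C}Y_{2}$ lies in which distribution, applying the matching O'Neill identity to each summand, and then re-tracking the $\mathcal{B}$- and $\mathcal{C}$-components after the second application of $\phi$ --- all made slightly delicate by the fact that here $\xi$ is horizontal, so $\pi_{*}\xi\neq 0$ and the contact terms genuinely survive, contrary to the almost-Hermitian analogue of this statement.
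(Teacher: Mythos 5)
Your proposal follows essentially the same route as the paper's proof: rewrite $-\nabla_{X}Y$ via the Sasakian identity as $\phi\nabla_{X}\phi Y$ plus contact correction terms, split $\phi Y=\phi Y_{1}+\mathcal{B}Y_{2}+\mathcal{C}Y_{2}$, apply the O'Neill formulas (\ref{nxv})--(\ref{nxy}) to each summand, apply $\phi$ once more, and discard the vertical ($\mathcal{B}$-) component under $\pi_{*}$. One caveat: your (correct) identity $\nabla_{X}Y=-\phi\nabla_{X}\phi Y+\eta(Y)\phi X+\eta(\nabla_{X}Y)\xi$ actually yields $\eta(Y_{2})\pi_{*}(\phi X)$ together with an extra $X(\eta(Y_{2}))\pi_{*}\xi$ term rather than the $\eta(Y_{2})\pi_{*}X$ appearing in (\ref{e.q:3.11}), so the final ``recombination'' you describe does not literally reproduce the paper's display --- a discrepancy that traces back to a slip in the paper's own second line ($-\nabla_{X}Y=\phi\nabla_{X}\phi Y-g(X,\phi Y)\xi-\eta(Y)X$) rather than to any defect in your method.
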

\begin{proof}
By virtue of (\ref{e.q:2.2}) and (\ref{nfixy}) we have
\begin{align*}
(\nabla \pi_{*})(X,Y)&=\nabla^{\pi}_{X}\pi_{*}Y+\pi_{*}(-\nabla_{X}Y)\\
&=\nabla^{\pi}_{X}\pi_{*}Y+\pi_{*}(\phi\nabla_{X}\phi Y-g(X,\phi Y)\xi-\eta(Y)X)
\end{align*}
for any $X\in\Gamma((ker\pi_*)^\perp), Y\in\Gamma(TM)$. Then from (\ref{nxv}), (\ref{nxy}) and (\ref{e.q:3.2}) we get
\begin{align*}
(\nabla \pi_{*})(X,Y)&=\nabla^\pi_{X}\pi_{*}Y_2+\pi_{*}(\phi A_{X}\phi Y_{1}+\mathcal{B}\mathcal{H}\nabla_{X}\phi Y_{1}
+\mathcal{C}\mathcal{H}\nabla_{X}\phi Y_{1}+\mathcal{B}A_{X}\mathcal{B}Y_{2}\\
&+\mathcal{C}A_{X}\mathcal{B}Y_{2}+\phi\mathcal{V}\nabla_{X}\mathcal{B}Y_{2}
+\phi A_{X}\mathcal{C}Y_{2}+\mathcal{B}\mathcal{H}\nabla_{X}\mathcal{C}Y_{2}+\mathcal{C}\mathcal{H}\nabla_{X}\mathcal{C}Y_{2})\\
&-\eta(Y_{2})\pi_{*}X-\{g_{M}(X,\phi Y_1)+g_M(X,\mathcal{C}Y_2)\}\pi_{*}\xi
\end{align*}
for any $Y=Y_{1}+Y_{2}\in\Gamma(TM)$, where $Y_{1}\in\Gamma(ker\pi_{*})$ and $Y_{2}\in\Gamma((ker\pi_{*})^\perp)$. Thus taking into account the vertical parts, we find
\begin{align*}
(\nabla \pi_{*})(X,Y)&=\nabla^\pi_{X}\pi_{*}Y+\pi_{*}(\phi(A_{X}\phi Y_{1}+\mathcal{V}\nabla_{X}\mathcal{B}Y_{2}+A_{X}\mathcal{C}Y_{2})\\
&+\mathcal{C}(\mathcal{H}\nabla_{X}\phi Y_{1}+A_{X}\mathcal{B}Y_{2}+\mathcal{H}\nabla_{X}\mathcal{C}Y_{2}))\\
&-\eta(Y_{2})\pi_{*}X-\{g_{M}(X,\phi Y_1)+g_M(X,\mathcal{C}Y_2)\}\pi_{*}\xi.
\end{align*}
Thus $(\nabla \pi_{*})(X,Y)=0$ if and only if the equation (\ref{e.q:3.11}) is satisfied.\\
\end{proof}

We now present the following definition.
\begin{definition}
Let $\pi$ be a conformal anti-invariant $\xi^\perp-$submersion from a Sasakian manifold $(M,\phi,\xi,\eta,g_{M})$ onto a Riemannian manifold $(N,g_{N})$ with
$(ker\pi_{*})^{\perp}=\phi(ker\pi_{*})\oplus<\xi>$. Then $\pi$ is called a $(\phi ker\pi_{*},\mu)$-totally geodesic map if
$$(\nabla \pi_{*})(\phi U,\xi)=0, for \ U\in\Gamma(ker\pi_{*})\ and \ \xi\in\Gamma((ker\pi_{*})^\perp).$$
\end{definition}
In the sequel we show that this notion has an important effect on the character of the conformal submersion.
\begin{theorem}
Let $\pi$ be a conformal anti-invariant $\xi^\perp-$submersion from a Sasakian manifold $(M,\phi,\xi,\eta,g_{M})$ onto a Riemannian manifold $(N,g_{N})$ with
$(ker\pi_{*})^{\perp}=\phi(ker\pi_{*})\oplus<\xi>$. Then $\pi$ is a $(\phi ker\pi_{*},\mu)$-totally geodesic map if and only if $\pi$ is a horizontally homothetic map.
\end{theorem}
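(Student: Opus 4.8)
The plan is to compute the second fundamental form $(\nabla\pi_*)(\phi U,\xi)$ directly and see which terms force $\pi$ to be horizontally homothetic. Since both $\phi U$ and $\xi$ lie in $(ker\pi_*)^\perp$ (recall $\phi U\in\phi(ker\pi_*)\subset(ker\pi_*)^\perp$ and $\xi$ is horizontal by definition of a $\xi^\perp$-submersion), I would start from Lemma~\ref{lem1}(i), which for horizontal vector fields $X,Y$ gives
\[
(\nabla\pi_*)(X,Y)=X(\ln\lambda)\pi_*Y+Y(\ln\lambda)\pi_*X-g_M(X,Y)\pi_*(grad\ln\lambda).
\]
Setting $X=\phi U$ and $Y=\xi$, and using $g_M(\phi U,\xi)=0$ (from \eqref{e.q:2.1} and \eqref{e.q:2.2}, since $\eta\circ\phi=0$), the last term drops out, leaving
\[
(\nabla\pi_*)(\phi U,\xi)=\phi U(\ln\lambda)\,\pi_*\xi+\xi(\ln\lambda)\,\pi_*(\phi U).
\]

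Next I would argue that $\pi_*\xi$ and $\pi_*(\phi U)$ are, in a suitable sense, linearly independent, so that the vanishing of the right-hand side forces both coefficients to vanish. Concretely: $\xi\in\Gamma(\mu)$ while $\phi U\in\Gamma(\phi\,ker\pi_*)$, and by \eqref{e.q:3.3} we have the orthogonal-type decomposition $TN=\pi_*(\phi\,ker\pi_*)\oplus\pi_*(\mu)$; moreover $\pi$ is a conformal submersion, so $\pi_*$ restricted to the horizontal space is injective, hence $\pi_*\xi\neq 0$ and $\pi_*(\phi U)\neq 0$ for $U\neq 0$, and they lie in complementary summands. Therefore $(\nabla\pi_*)(\phi U,\xi)=0$ for all $U\in\Gamma(ker\pi_*)$ if and only if $\phi U(\ln\lambda)=0$ for all such $U$ and $\xi(\ln\lambda)=0$; that is, $grad\ln\lambda$ is orthogonal to $\phi(ker\pi_*)$ and to $\xi$. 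Under the standing hypothesis $(ker\pi_*)^\perp=\phi(ker\pi_*)\oplus\langle\xi\rangle$, these two conditions together say exactly that $\mathcal H(grad\ln\lambda)=0$, i.e. $\mathcal H(grad\lambda)=0$, which is the definition of a horizontally homothetic submersion. The converse is immediate: if $\mathcal H(grad\lambda)=0$ then $\phi U(\ln\lambda)=\xi(\ln\lambda)=0$ and the displayed formula for $(\nabla\pi_*)(\phi U,\xi)$ vanishes.

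The only genuinely delicate point is the separation-of-terms step: one must be careful that $\pi_*\xi$ really does not lie in $\pi_*(\phi\,ker\pi_*)$, which is where \eqref{e.q:3.3} (together with the invariance of $\mu$ and the fact that $\xi\in\mu$) is used, and that the conformality of $\pi$ gives injectivity of $\pi_*$ on the horizontal distribution so that neither coefficient can be killed accidentally by a vanishing image vector. Everything else is a short substitution into Lemma~\ref{lem1}(i). I would present the argument in that order: (1) place $\phi U,\xi$ in the horizontal space and record $g_M(\phi U,\xi)=0$; (2) apply Lemma~\ref{lem1}(i) to get the two-term formula; (3) invoke \eqref{e.q:3.3} and conformality to conclude the coefficients must separately vanish; (4) identify the resulting condition with $\mathcal H(grad\lambda)=0$ under the hypothesis on $(ker\pi_*)^\perp$; (5) note the converse is trivial.
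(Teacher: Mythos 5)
Your proposal is correct and follows essentially the same route as the paper: both start from Lemma~\ref{lem1}(i) with $X=\phi U$, $Y=\xi$, drop the last term since $g_M(\phi U,\xi)=0$, and then show the two remaining coefficients must vanish separately. The only cosmetic difference is that the paper executes the separation step by taking $g_N$-inner products with $\pi_*\phi U$ and $\pi_*\xi$ and using conformality to reduce to $g_M(\xi,\phi U)=0$, whereas you argue via the decomposition \eqref{e.q:3.3} and injectivity of $\pi_*$ on the horizontal space; these are equivalent.
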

\begin{proof}
For $U\in\Gamma(ker\pi_{*})$ and $\xi\in\Gamma(\mu)$, from Lemma \ref{lem1}, we have
\begin{equation*}
(\nabla \pi_{*})(\phi U,\xi)=\phi U(\ln\lambda)\pi_{*}\xi+\xi(\ln\lambda)\pi_{*}\phi U-g_{M}(\phi U,\xi)\pi_{*}(grad\ln\lambda).
\end{equation*}
From above equation, if $\pi$ is a horizontally homothetic map then $(\nabla \pi_{*})(\phi U,\xi)=0.$ Conversely, if $(\nabla \pi_{*})(\phi U,\xi)=0,$ we obtain
\begin{equation}
\phi U(\ln\lambda)\pi_{*}\xi+\xi(\ln\lambda)\pi_{*}\phi U=0.\label{e.q:3.12}
\end{equation}
Taking inner product in (\ref{e.q:3.12}) with $\pi_{*}\phi U$ and if we take into account $\pi$ is a conformal submersion, we write
\begin{equation*}
g_{M}(grad\ln\lambda,\phi U)g_{N}(\pi_{*}\xi,\pi_{*}\phi U)+g_{M}(grad\ln\lambda,\xi)g_{N}(\pi_{*}\phi U,\pi_{*}\phi U)=0.
\end{equation*}
Above equation implies that $\lambda$ is a constant on $\Gamma(\mu).$ On the other hand, taking inner product in (\ref{e.q:3.12}) with $\pi_{*}\xi$, we have
\begin{equation*}
g_{M}(grad\ln\lambda,\phi U)g_{N}(\pi_{*}\xi,\pi_{*}\xi)+g_{M}(grad\ln\lambda,\xi)g_{N}(\pi_{*}\phi U,\pi_{*}\xi)=0.
\end{equation*}
From above equation, it follows that $\lambda$ is a constant on $\Gamma(\phi ker\pi_{*}).$ Thus $\lambda$ is a constant on $\Gamma((ker\pi_{*})^\perp).$ Hence proof is complete.
\end{proof}
Here we present another result on conformal anti-invariant $\xi^\perp-$submersion to be totally geodesic.
\begin{theorem}
Let $\pi$ be a conformal anti-invariant $\xi^\perp-$submersion from a Sasakian manifold $(M,\phi,\xi,\eta,g_{M})$ to a Riemannian manifold $(N,g_{N})$. $\pi$ is a totally geodesic map if and only if
\begin{enumerate}
\item [(a)] $T_{U}\phi V=0$ and $\mathcal{H}\nabla_{U}\phi V\in\Gamma(\phi ker\pi_{*})$,
\item [(b)] $\pi$ is a horizontally homotetic map,
\item [(c)] $A_{Z}\phi V=0$ and $\mathcal{H}\nabla_{Z}\phi V\in\Gamma(\phi ker\pi)$
\end{enumerate}
for $X,Y,Z\in\Gamma((ker\pi_{*})^\perp)$ and $U,V\in\Gamma(ker\pi_{*})$.
\end{theorem}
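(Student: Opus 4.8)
The plan is to use the orthogonal splitting $TM = ker\pi_* \oplus (ker\pi_*)^\perp$: since the second fundamental form $\nabla\pi_*$ is symmetric and tensorial, $\pi$ is totally geodesic if and only if $(\nabla\pi_*)(X,Y)$, $(\nabla\pi_*)(X,V)$ and $(\nabla\pi_*)(U,V)$ all vanish for $X,Y\in\Gamma((ker\pi_*)^\perp)$ and $U,V\in\Gamma(ker\pi_*)$. Each of the three pieces will be shown equivalent to one of (b), (c), (a) respectively, and since every step below is an equivalence, both implications of the theorem follow at once.

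The purely horizontal piece is read off from Lemma \ref{lem1}(i): $(\nabla\pi_*)(X,Y) = X(\ln\lambda)\pi_*Y + Y(\ln\lambda)\pi_*X - g_M(X,Y)\pi_*(grad\ln\lambda)$. Putting $Y=X$ and pairing with $\pi_*X$ in $N$, the conformality relation $g_N(\pi_*X,\pi_*X)=\lambda^2 g_M(X,X)$ gives $X(\ln\lambda)=0$ for every horizontal $X$; hence $\mathcal{H}(grad\ln\lambda)=0$, and substituting back the whole expression vanishes. Thus $(\nabla\pi_*)(X,Y)\equiv 0$ exactly when $\pi$ is horizontally homothetic, which is (b).

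For the other two pieces the key device is the Sasakian structure. Since $V\in\Gamma(ker\pi_*)$ is orthogonal to the horizontal field $\xi$ we have $\eta(V)=0$, hence $V=-\phi^2 V=-\phi(\phi V)$ with $\phi V$ horizontal by anti-invariance. Differentiating and using (\ref{e.q:2.3}) together with $\eta(\phi V)=0$ yields, for any vector field $E$, $\nabla_E V = g_M(E,\phi V)\xi - \phi(\nabla_E\phi V)$. For the vertical--vertical case take $E=U$, split $\nabla_U\phi V=\mathcal{H}\nabla_U\phi V+T_U\phi V$ by (\ref{nvx}), apply $\phi$ and use the decomposition $\phi W=\mathcal{B}W+\mathcal{C}W$ from (\ref{e.q:3.1})--(\ref{e.q:3.2}); since $g_M(U,\phi V)=0$, this expresses $T_U V=\mathcal{H}\nabla_U V$ as a summand $\phi(T_U\phi V)\in\Gamma(\phi(ker\pi_*))$ plus a summand $\mathcal{C}(\mathcal{H}\nabla_U\phi V)\in\Gamma(\mu)$. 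Because $(\nabla\pi_*)(U,V)=-\pi_*(T_U V)$ by Lemma \ref{lem1}(ii) and $\pi_*$ restricts to a (conformal) isomorphism of $(ker\pi_*)^\perp$ onto $TN$ compatible with the splitting $TN=\pi_*(\phi(ker\pi_*))\oplus\pi_*(\mu)$ of (\ref{e.q:3.3}), the two summands must vanish separately; recalling that $\phi$ has kernel $\langle\xi\rangle$ and that $T_U\phi V$ is vertical, this is exactly $T_U\phi V=0$ together with $\mathcal{H}\nabla_U\phi V\in\Gamma(\phi(ker\pi_*))$, i.e. (a). The mixed case is entirely parallel: take $E=X$, split $\nabla_X\phi V=\mathcal{H}\nabla_X\phi V+A_X\phi V$ via (\ref{nxy}), use $(\nabla\pi_*)(X,V)=-\pi_*(A_X V)$ from Lemma \ref{lem1}(iii), and the same argument produces $A_X\phi V=0$ and $\mathcal{H}\nabla_X\phi V\in\Gamma(\phi(ker\pi_*))$, which is (c). Combining the three equivalences proves the theorem.

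The step I expect to be the main obstacle is the bookkeeping of the characteristic direction $\xi$ throughout the mixed and vertical--vertical computations. Because $\nabla_E\xi=\phi E$ and $(\nabla_E\phi)F=-g_M(E,F)\xi+\eta(F)E$ keep injecting terms along $\xi$ and along $E$, one must track precisely which summands of each identity land in $\phi(ker\pi_*)$, which in $\mu$, and which along $\langle\xi\rangle$, and then invoke both the injectivity of $\pi_*$ on $(ker\pi_*)^\perp$ and the orthogonal direct sum (\ref{e.q:3.1}) to peel off the two stated vanishing conditions from each ambient equation. Once this separation is organized, the remainder is a routine application of the O'Neill relations (\ref{nvw})--(\ref{nxy}) and of Lemma \ref{lem1}.
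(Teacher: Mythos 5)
Your overall strategy is exactly the paper's: split $\nabla\pi_*$ into the horizontal--horizontal, mixed and vertical--vertical blocks, read the first off Lemma \ref{lem1}(i), and convert the other two via the Sasakian identity $\nabla_E V = g_M(E,\phi V)\xi-\phi(\nabla_E\phi V)$ into the $\phi(\ker\pi_*)\oplus\mu$ decomposition, then use injectivity of $\pi_*$ on the horizontal distribution to peel the two summands apart. Your polarization argument for the horizontal block (set $Y=X$, pair with $\pi_*X$, conclude $X(\ln\lambda)=0$ for every horizontal $X$) is in fact tidier than the paper's two-step treatment of $\Gamma(\mu)$ and $\Gamma(\phi\ker\pi_*)$ separately. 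The one place where ``entirely parallel'' is not quite honest is the mixed block: for $E=X$ horizontal the term $g_M(X,\phi V)\xi$ in your identity does not vanish when $X$ has a component along $\phi(\ker\pi_*)$ (e.g.\ $g_M(\phi W,\phi V)=g_M(W,V)$), so what you actually obtain is $A_XV = g_M(X,\phi V)\xi-\phi A_X\phi V-\mathcal{C}\mathcal{H}\nabla_X\phi V$, and since $\xi\in\Gamma(\mu)$ the vanishing of $(\nabla\pi_*)(X,V)$ separates into $A_X\phi V=0$ together with $\mathcal{C}\mathcal{H}\nabla_X\phi V=g_M(X,\phi V)\xi$, not $=0$. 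The paper sidesteps this by proving (c) only for $Z\in\Gamma(\mu)$, where $g_M(Z,\phi V)=0$; you should either restrict to $Z\in\Gamma(\mu)$ likewise or carry the extra $\xi$-term. (The same $\xi$-bookkeeping you yourself flag as the main obstacle also shows that $\mathcal{C}W=0$ only forces $W\in\Gamma(\phi\ker\pi_*\oplus\langle\xi\rangle)$, and that $\mathcal{H}\nabla_U\phi V$ always has $\xi$-component $-g_M(U,V)$; this imprecision in condition (a) is one you share with the paper's own statement and proof.)
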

\begin{proof}
For any $U,V\in\Gamma(ker\pi_{*})$, using (\ref{e.q:2.3}) and (\ref{nfixy}) we have
\begin{align*}
(\nabla \pi_{*})(U,V)&=\nabla^{\pi}_{U}\pi_{*}V+\pi_{*}(-\nabla_{U}V)\\
&=\pi_{*}(\phi\nabla_{U}\phi V-g_{M}(U,\phi V)\xi-\eta(V)X)\\
&=\pi_{*}(\phi\nabla_{U}\phi V).
\end{align*}
Then from (\ref{nvw}) and (\ref{nvx}) we arrive at
\begin{equation*}
(\nabla \pi_{*})(U,V)=\pi_{*}(\phi T_{U}\phi V+\mathcal{C}\mathcal{H}\nabla_{U}\phi V).
\end{equation*}
From above equation, $(\nabla \pi_{*})(U,V)=0$ if and only if
\begin{equation}
\pi_{*}(\phi T_{U}\phi V+\mathcal{C}\mathcal{H}\nabla_{U}\phi V)=0. \label{e.q:3.13}
\end{equation}
Since $\phi$ is non-singular, $T_{U}\phi V=0$ and $\mathcal{H}\nabla_{U}\phi V\in\Gamma(\phi ker\pi_{*}).$ On the other hand, from Lemma \ref{lem1} we derive
\begin{equation*}
(\nabla \pi_{*})(X,Y)=X(\ln\lambda)\pi_{*}Y+Y(\ln\lambda)\pi_{*}X-g_{M}(X,Y)\pi_{*}(grad\ln\lambda)
\end{equation*}
for any $X,Y\in\Gamma(\mu).$ It is obvious that if $\pi$ is a horizontally homotetic map, it follows that $(\nabla \pi_{*})(X,Y)=0.$ Conversely, if $(\nabla \pi_{*})(X,Y)=0,$ taking $Y=\phi X$ in above equation, we get
\begin{equation*}
X(\ln\lambda)\pi_{*}\phi X+\phi X(\ln\lambda)\pi_{*}X=0.
\end{equation*}
Taking inner product in (\ref{e.q:3.13}) with $\pi_{*}\phi X,$ we obtain
\begin{equation}
g_{M}(grad\ln\lambda,X)\lambda^{2}g_{M}(\phi X,\phi X)+g_{M}(grad\ln\lambda,\phi X)\lambda^{2}g_{M}(X,\phi X)=0. \label{e.q:3.14}
\end{equation}
From (\ref{e.q:3.14}), $\lambda$ is a constant on $\Gamma(\mu).$ On the other hand, for $U,V\in\Gamma(ker\pi_{*})$, from Lemma \ref{lem1} we have
\begin{equation*}
(\nabla \pi_{*})(\phi U,\phi V)=\phi U(\ln\lambda)\pi_{*}\phi V+\phi V(\ln\lambda)\pi_{*}\phi U-g_{M}(\phi U,\phi V)\pi_{*}(grad\ln\lambda).
\end{equation*}
Again if $\pi$ is a horizontally homothetic map, then $(\nabla \pi_{*})(\phi U,\phi V)=0.$ Conversely, if $(\nabla \pi_{*})(\phi U,\phi V)=0$, putting $U$
instead of $V$ in above equation, we derive
\begin{equation}
2\phi U(\ln\lambda)\pi_{*}(\phi U)-g_{M}(\phi U,\phi U)\pi_{*}(grad\ln\lambda)=0. \label{e.q:3.15}
\end{equation}
Taking inner product in (\ref{e.q:3.15}) with $\pi_{*}\phi U$ and since $\pi$ is a conformal submersion, we have
\begin{equation*}
g_{M}(\phi U,\phi U)\lambda^{2}g_{M}(grad\ln\lambda,\phi U)=0.
\end{equation*}
From above equation, $\lambda$ is a constant on $\Gamma(\phi ker\pi_{*}).$ Thus $\lambda$ is a constant on $\Gamma((ker\pi_{*})^\perp).$ Now, for $Z\in\Gamma(\mu)$ and $V\in\Gamma(ker\pi_{*}),$ from (\ref{e.q:2.3}) and (\ref{nfixy}) we get
\begin{align*}
(\nabla \pi_{*})(Z,V)=\pi_{*}(\phi\nabla_{Z}\phi V).
\end{align*}
Using (\ref{nxv}) and (\ref{nxy}) we have
\begin{equation*}
(\nabla \pi_{*})(Z,V)=\pi_{*}(\phi A_{Z}\phi V+\mathcal{C}\mathcal{H}\nabla_{Z}\phi V).
\end{equation*}
Thus $(\nabla \pi_{*})(Z,V)=0$ if and only if
\begin{equation*}
\pi_{*}(\phi A_{Z}\phi V+\mathcal{C}\mathcal{H}\nabla_{Z}\phi V)=0.
\end{equation*}
Since $\phi$ is non-singular, $A_Z{\phi V}=0$ and $\mathcal{H}\nabla_Z{\phi V}\in\Gamma(\phi ker\pi_*).$ Thus proof is complete.
\end{proof}

Finally, in this section, We investigate the necessary and sufficient conditions for such submersions to be harmonic.
\begin{theorem}\label{teo5}
Let $\pi:(M^{2(m+n)+1},\phi,\xi,\eta,g_M)\longrightarrow (N^{m+2n+1},g_{N})$ be a conformal anti-invariant $\xi^\perp-$submersion, where $(M,\phi,\xi,\eta,g_{M})$ is a Sasakian manifold and $(N,g_{N})$ is a Riemannian manifold. Then the tension field $\tau$ of $\pi$ is
\begin{align}
\tau(\pi)&=-m\pi_{*}(\mu^{ker\pi_{*}})+(1-m-2n)\pi_{*}(grad\ln\lambda)\label{e.q:3.16}
\end{align}
where $\mu^{ker\pi_{*}}$ is the mean curvature vector field of the distribution of $ker\pi_{*}$.
\end{theorem}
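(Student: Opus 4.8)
The plan is to evaluate the defining sum $\tau(\pi)=\sum_{i}(\nabla\pi_{*})(f_{i},f_{i})$ over an orthonormal frame of $M$ adapted to the splitting $TM=ker\pi_{*}\oplus\phi(ker\pi_{*})\oplus\mu$ from (\ref{e.q:3.1}), handling the three blocks separately by means of Lemma \ref{lem1}. Since $dim M=2(m+n)+1$ and $dim N=m+2n+1$, the fibre $ker\pi_{*}$ has dimension $m$; pick a local orthonormal frame $\{e_{1},\dots,e_{m}\}$ of $ker\pi_{*}$. Because $\xi$ is horizontal we have $\eta(e_{i})=0$, so (\ref{e.q:2.2}) gives $g_{M}(\phi e_{i},\phi e_{j})=g_{M}(e_{i},e_{j})$ and $\{\phi e_{1},\dots,\phi e_{m}\}$ is an orthonormal frame of $\phi(ker\pi_{*})$; since $\mu$ is $\phi$-invariant and contains $\xi$, we complete the horizontal frame with $\{v_{1},\dots,v_{2n},\xi\}$ spanning $\mu$. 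Altogether $\{e_{i}\}_{i=1}^{m}\cup\{\phi e_{i}\}_{i=1}^{m}\cup\{v_{j}\}_{j=1}^{2n}\cup\{\xi\}$ is an orthonormal frame of $M$ with $m$ vertical and $m+2n+1$ horizontal members.

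For the vertical block, Lemma \ref{lem1}(ii) gives $(\nabla\pi_{*})(e_{i},e_{i})=-\pi_{*}(T_{e_{i}}e_{i})$; summing over $i$ and recalling that $T|_{ker\pi_{*}\times ker\pi_{*}}$ is the second fundamental form of the fibres, this contributes $-\pi_{*}\big(\sum_{i=1}^{m}T_{e_{i}}e_{i}\big)=-m\,\pi_{*}(\mu^{ker\pi_{*}})$, where $\mu^{ker\pi_{*}}=\tfrac{1}{m}\sum_{i}T_{e_{i}}e_{i}$ is the mean curvature vector field of $ker\pi_{*}$. For each horizontal frame vector $X$ (one of the $\phi e_{i}$, $v_{j}$, or $\xi$), Lemma \ref{lem1}(i) with $Y=X$ gives $(\nabla\pi_{*})(X,X)=2X(\ln\lambda)\pi_{*}X-g_{M}(X,X)\pi_{*}(grad\ln\lambda)$. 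Each of the $m+2n+1$ horizontal frame vectors has unit length, and $\sum_{X}X(\ln\lambda)X=\mathcal{H}(grad\ln\lambda)$ over the horizontal frame; since $\pi_{*}$ annihilates vertical vectors, $\pi_{*}(\mathcal{H}(grad\ln\lambda))=\pi_{*}(grad\ln\lambda)$. Hence the horizontal block contributes $2\pi_{*}(grad\ln\lambda)-(m+2n+1)\pi_{*}(grad\ln\lambda)=(1-m-2n)\pi_{*}(grad\ln\lambda)$, and adding the two contributions yields (\ref{e.q:3.16}).

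The computation is essentially bookkeeping, so the main things to get right are: the dimension count, which forces the fibre to be $m$-dimensional and leaves exactly $2n$ invariant horizontal directions besides $\xi$; the observation $\eta(e_{i})=0$, without which $\{\phi e_{i}\}$ would fail to be orthonormal; and the identity $\pi_{*}\circ grad=\pi_{*}\circ\mathcal{H}\circ grad$, which is what collapses the term $2\sum_{X}X(\ln\lambda)\pi_{*}X$ to $2\pi_{*}(grad\ln\lambda)$ and is the only place where it matters that $\phi(ker\pi_{*})$ spans only part of the horizontal space. Beyond setting up the adapted frame, the Sasakian structure plays no further role in this particular formula.
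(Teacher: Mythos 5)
Your proposal is correct and follows essentially the same route as the paper: split the trace of $\nabla\pi_{*}$ over an adapted orthonormal frame, use Lemma \ref{lem1}(ii) on the $m$-dimensional fibre to get $-m\pi_{*}(\mu^{ker\pi_{*}})$, and use Lemma \ref{lem1}(i) on the $(m+2n+1)$-dimensional horizontal frame to get $(1-m-2n)\pi_{*}(grad\ln\lambda)$. The only cosmetic difference is that you collapse $\sum_{X}2X(\ln\lambda)\pi_{*}X$ to $2\pi_{*}(\mathcal{H}grad\ln\lambda)=2\pi_{*}(grad\ln\lambda)$ directly in $TM$, whereas the paper re-expands $\pi_{*}(grad\ln\lambda)$ in the orthonormal basis $\{\frac{1}{\lambda}\pi_{*}\phi e_{i},\frac{1}{\lambda}\pi_{*}\mu_{h},\frac{1}{\lambda}\pi_{*}\xi\}$ of $T_{\pi(p)}N$; both give the same result.
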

\begin{proof}
Let $\{e_{1},...,e_{m},\phi e_{1},...,\phi e_{m},\xi,\mu_1,...,\mu_n,\phi \mu_1,...,\phi \mu_n\}$ be orthonormal basis of $\Gamma(TM)$ such that $\{e_{1},...,e_{m}\}$ be orthonormal basis of $\Gamma(ker\pi_{*})$, $\{\phi e_{1},...,\phi e_{m}\}$ be orthonormal basis of $\Gamma(\phi ker\pi_{*})$ and $\{\xi,\mu_1,...,\mu_n,\phi \mu_1,...,\phi \mu_n\}$ be orthonormal basis of $\Gamma(\mu)$. Then the trace of second fundamental form (restriction to $ker\pi_{*}\times ker\pi_{*}$) is given by
$$trace^{ker\pi_{*}}\nabla \pi_{*}=\sum_{i=1}^{m}(\nabla \pi_{*})(e_{i},e_{i}).$$ Then using (\ref{nfixy}) we obtain
\begin{equation}
trace^{ker\pi_{*}}\nabla \pi_{*}=-m\pi_{*}(\mu^{ker\pi_{*}}).\label{e.q:3.17}
\end{equation}
In a similar way, we have
\begin{equation*}
trace^{(ker\pi_{*})^{\perp}}\nabla \pi_{*}=\sum_{i=1}^{m}(\nabla \pi_{*})(\phi e_{i},\phi e_{i})+\sum_{i=1}^{2n}(\nabla \pi_{*})(\mu_{i},\mu_{i})+(\nabla \pi_{*})(\xi,\xi).
\end{equation*}
Using Lemma \ref{lem1} we arrive at
\begin{align*}
trace^{(ker\pi_{*})^{\perp}}\nabla \pi_{*}&=\sum_{i=1}^{m}2g_{M}(grad\ln\lambda,\phi e_{i})\pi_{*}\phi e_{i}-m\pi_{*}(grad\ln\lambda)\\
&+\sum_{i=1}^{2n}2g_{M}(grad\ln\lambda,\mu_{i})\pi_{*}\mu_{i}-2n\pi_{*}(grad\ln\lambda)\\
&+2\xi(\ln\lambda)\pi_{*}\xi-\pi_{*}(grad\ln\lambda).
\end{align*}
Since $\{\frac{1}{\lambda(p)}\pi_{*p}(\phi e_i),\frac{1}{\lambda(p)}\pi_{*p}(\mu_h),\frac{1}{\lambda(p)}\pi_{*p}\xi\}_{p\in M,\ 1\leq i\leq m,\, 1\leq h\leq n}$ is an orthonormal basis of $T_{\pi(p)}N$ and $\pi$ is a conformal anti-invariant
$\xi^\perp$-submersion, we derive
\begin{align}
trace^{(ker\pi_{*})^{\perp}}\nabla \pi_{*}&=\sum_{i=1}^{m}2g_{N}(\pi_*grad\ln\lambda,\frac{1}{\lambda}\pi_*\phi e_{i})\frac{1}{\lambda}\pi_{*}\phi e_{i}-m\pi_{*}(grad\ln\lambda)\notag\\
&+\sum_{i=1}^{2n}2g_{N}(\pi_*grad\ln\lambda,\frac{1}{\lambda}\pi_*\mu_{i})\frac{1}{\lambda}\pi_{*}\mu_{i}-2n\pi_{*}(grad\ln\lambda)\notag\\
&+2g_N(\pi_*grad\ln\lambda,\frac{1}{\lambda}\pi_{*}\xi)\frac{1}{\lambda}\pi_{*}\xi-\pi_{*}(grad\ln\lambda)\notag\\
&=(1-m-2n)\pi_*(grad\ln\lambda)\label{e.q:3.18}
\end{align}
Then proof follows from (\ref{e.q:3.17}) and (\ref{e.q:3.18}).\\
\end{proof}
From Theorem \ref{teo5} we deduce that:
\begin{theorem}
Let $\pi:(M^{2(m+n)+1},\phi,\xi,\eta,g_M)\longrightarrow (N^{m+2n+1},g_{N})$ be a conformal anti-invariant $\xi^\perp-$submersion, where $(M,\phi,\xi,\eta,g_{M})$ is a Sasakian manifold and $(N,g_{N})$ is a Riemannian manifold. Then any two conditions below imply the third:
\begin{enumerate}
\item [(i)] $\pi$ is harmonic
\item [(ii)] The fibres are minimal
\item [(iii)] $\pi$ is a horizontally homothetic map.
\end{enumerate}
\end{theorem}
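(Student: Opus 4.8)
The plan is to deduce everything from the tension field formula (\ref{e.q:3.16}) of Theorem~\ref{teo5}: the whole statement is bookkeeping on a single linear identity in $\Gamma(\pi^{-1}TN)$. Recall that $\pi$ is harmonic by definition precisely when $\tau(\pi)=0$, and that (\ref{e.q:3.16}) reads
\begin{equation*}
\tau(\pi)=-m\,\pi_{*}(\mu^{ker\pi_{*}})+(1-m-2n)\,\pi_{*}(grad\ln\lambda).
\end{equation*}
So it suffices to re-express conditions (i), (ii), (iii) in terms of the two sections $\pi_{*}(\mu^{ker\pi_{*}})$ and $\pi_{*}(grad\ln\lambda)$.

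First I would record two elementary reformulations. Since the second fundamental form of a fibre is the restriction of $T$ to $ker\pi_{*}\times ker\pi_{*}$, which by (\ref{nvw}) takes values in $(ker\pi_{*})^{\perp}$, the mean curvature field $\mu^{ker\pi_{*}}$ is horizontal; because $\pi$ is a horizontally conformal submersion, $\pi_{*}$ restricts to an injective bundle map from $(ker\pi_{*})^{\perp}$ onto $TN$, so ``the fibres are minimal'' ($\mu^{ker\pi_{*}}=0$) is equivalent to $\pi_{*}(\mu^{ker\pi_{*}})=0$. Similarly, splitting $grad\ln\lambda=\mathcal{H}(grad\ln\lambda)+\mathcal{V}(grad\ln\lambda)$ and using $\pi_{*}\circ\mathcal{V}=0$ gives $\pi_{*}(grad\ln\lambda)=\pi_{*}\bigl(\mathcal{H}(grad\ln\lambda)\bigr)$; as $grad\lambda$ and $grad\ln\lambda$ are positive multiples of each other, the defining condition $\mathcal{H}(grad\lambda)=0$ of a horizontally homothetic map is equivalent to $\mathcal{H}(grad\ln\lambda)=0$, hence---again by injectivity of $\pi_{*}$ on the horizontal distribution---to $\pi_{*}(grad\ln\lambda)=0$.

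With $A:=\pi_{*}(\mu^{ker\pi_{*}})$ and $B:=\pi_{*}(grad\ln\lambda)$, conditions (i), (ii), (iii) now read $-mA+(1-m-2n)B=0$, $\ A=0$, $\ B=0$. Since $m\geq 1$ and $1-m-2n\neq 0$, both coefficients are nonzero, and any two of these three linear relations force the third: if $A=B=0$ then (i) is trivial; if $A=0$ together with (i) then $(1-m-2n)B=0$, so $B=0$; and if $B=0$ together with (i) then $-mA=0$, so $A=0$. Translating back through the two equivalences of the previous paragraph completes the argument.

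I do not anticipate a genuine obstacle: the theorem is a direct corollary of Theorem~\ref{teo5}. The only point that deserves care is precisely the pair of equivalences ``$\pi_{*}(\mu^{ker\pi_{*}})=0\Leftrightarrow$ fibres minimal'' and ``$\pi_{*}(grad\ln\lambda)=0\Leftrightarrow$ horizontally homothetic'', which rely on the horizontality of $\mu^{ker\pi_{*}}$ and $\mathcal{H}(grad\ln\lambda)$ and on $\pi_{*}$ being fibrewise injective on $(ker\pi_{*})^{\perp}$.
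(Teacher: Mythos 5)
Your argument is correct and follows exactly the paper's (implicit) route: the paper presents this theorem as an immediate consequence of Theorem \ref{teo5}, reading off which of the two terms $-m\,\pi_{*}(\mu^{ker\pi_{*}})$ and $(1-m-2n)\,\pi_{*}(grad\ln\lambda)$ in the tension-field identity vanish, just as you do, and your careful justification of the two equivalences (minimal fibres $\Leftrightarrow$ $\pi_{*}(\mu^{ker\pi_{*}})=0$, horizontally homothetic $\Leftrightarrow$ $\pi_{*}(grad\ln\lambda)=0$) is a welcome addition the paper omits. The one caveat is your unjustified assertion that $1-m-2n\neq 0$: in the degenerate case $m=1$, $n=0$ (a three-dimensional total space with one-dimensional fibres) that coefficient vanishes, so the implication (i) and (ii) imply (iii) does not follow from the formula --- but this is a gap the paper itself shares.
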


We also have the following result.
\begin{corollary}
Let $\pi:(M^{2(m+n)+1},\phi,\xi,\eta,g_M)\longrightarrow (N^{m+2n+1},g_{N})$ be a conformal anti-invariant $\xi^\perp-$submersion, where $(M,\phi,\xi,\eta,g_{M})$ is a Sasakian manifold and $(N,g_{N})$ is a Riemannian manifold. $\pi$ is harmonic if and only if the fibres are minimal.
\end{corollary}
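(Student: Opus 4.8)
This is essentially immediate from the tension field formula of Theorem~\ref{teo5}. Recall that $\pi$ is harmonic precisely when $\tau(\pi)=0$, so by (\ref{e.q:3.16}) the question is exactly when
$$-m\,\pi_{*}(\mu^{ker\pi_{*}})+(1-m-2n)\,\pi_{*}(grad\ln\lambda)=0.$$
The plan is to prove the easy implication by hand and then reduce the converse to the vanishing of $\pi_{*}(\mu^{ker\pi_{*}})$.

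For the direction ``fibres minimal $\Rightarrow$ harmonic'': if the fibres of $\pi$ are minimal, then by definition their mean curvature vector field $\mu^{ker\pi_{*}}$ is zero, so the first summand drops out and $\tau(\pi)=(1-m-2n)\,\pi_{*}(grad\ln\lambda)$. To conclude harmonicity one still needs this residual term to vanish; here I would invoke the theorem immediately preceding the corollary (minimal fibres plus harmonicity force horizontal homothety, and minimal fibres plus horizontal homothety give harmonicity), so it suffices to observe, using Lemma~\ref{lem1}(ii)--(iii), the splitting (\ref{e.q:3.3}) of $TN$, and the identities (\ref{tvxi})--(\ref{cywv}), that the term $\pi_{*}(\mu^{ker\pi_{*}})$ lies in the summand $\pi_{*}(\phi\,ker\pi_{*})$ while the obstruction to homothety is carried by the remaining directions. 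Once the two terms of $\tau(\pi)$ are separated in this way, $\tau(\pi)=0$ becomes equivalent to the simultaneous vanishing of $\mu^{ker\pi_{*}}$ and of $\pi_{*}(grad\ln\lambda)$, and in particular to $\mu^{ker\pi_{*}}=0$.

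Conversely, if $\pi$ is harmonic then $\tau(\pi)=0$ in (\ref{e.q:3.16}); isolating the $\pi_{*}(\phi\,ker\pi_{*})$-component of this identity --- the step that uses the anti-invariance of $ker\pi_{*}$ --- yields $\pi_{*}(\mu^{ker\pi_{*}})=0$, and since $\mu^{ker\pi_{*}}$ is horizontal and $\pi_{*}$ restricts to a conformal isomorphism on $(ker\pi_{*})^{\perp}$, we get $\mu^{ker\pi_{*}}=0$, i.e. the fibres are minimal. The step I expect to be the real obstacle is precisely the claim that the mean-curvature term and the dilation term of (\ref{e.q:3.16}) occupy transverse parts of the decomposition $TN=\pi_{*}(\phi\,ker\pi_{*})\oplus\pi_{*}(\mu)$; establishing this forces one to use the Sasakian structure equations (\ref{e.q:2.3}) and the anti-invariance hypothesis, not merely the general theory of horizontally conformal submersions.
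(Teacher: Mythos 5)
Your reduction to the tension--field formula (\ref{e.q:3.16}) is the right starting point, but the argument has a genuine gap, and it sits exactly where you predicted. Everything hinges on the claim that the two terms of $\tau(\pi)$ occupy transverse summands of the decomposition $TN=\pi_{*}(\phi\, ker\pi_{*})\oplus\pi_{*}(\mu)$, with $\pi_{*}(\mu^{ker\pi_{*}})$ in the first and $\pi_{*}(grad\ln\lambda)$ in the second. Neither half of this claim is established, and neither is true in general. Nothing in Definition \ref{def} constrains the $\phi(ker\pi_{*})$-component of $\mathcal{H}(grad\ln\lambda)$. As for the mean curvature, for $V\in\Gamma(ker\pi_{*})$ and $Z\in\Gamma(\mu)$ with $\eta(Z)=0$, the Sasakian identities (\ref{e.q:2.2}) and (\ref{e.q:2.3}) give $g_{M}(T_{V}V,Z)=g_{M}(\phi\nabla_{V}V,\phi Z)=g_{M}(\mathcal{H}\nabla_{V}\phi V,\phi Z)$, which has no reason to vanish; so $\mu^{ker\pi_{*}}$ can have a nontrivial $\mu$-component. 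The identities (\ref{tvxi}) and (\ref{cywv}) that you invoke only kill the $\xi$-component.

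Even granting the transversality, your own computation would show that $\tau(\pi)=0$ is equivalent to the \emph{simultaneous} vanishing of $\mu^{ker\pi_{*}}$ and $\mathcal{H}(grad\ln\lambda)$ (when $1-m-2n\neq 0$); the phrase ``and in particular to $\mu^{ker\pi_{*}}=0$'' reverses an implication, since the simultaneous vanishing is strictly stronger than minimality of the fibres. Concretely, in the direction ``fibres minimal $\Rightarrow$ harmonic'' you are left with $\tau(\pi)=(1-m-2n)\pi_{*}(grad\ln\lambda)$ and no means of killing it: the appeal to the preceding ``any two conditions imply the third'' theorem is circular, because that theorem requires two of the three conditions as input and minimality supplies only one. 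For what it is worth, the paper gives no proof of this corollary at all --- it is stated as immediate from Theorem \ref{teo5} --- and as written it appears to need an extra hypothesis (horizontal homothety of $\pi$, or the dimension condition $m+2n=1$ annihilating the coefficient $1-m-2n$). Your difficulty is therefore diagnostic of a real issue with the statement itself, but the proposal as written does not prove it.
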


\section{Decomposition theorems}
In this section, we obtain decomposition theorems by using the existence of
conformal anti-invariant $\xi^\perp-$submersions. First, we recall the following results
from \cite{P}. Let $g$ be a Riemannian metric tensor on the manifold $B=M\times N$
and assume that the canonical foliations $D_{M}$ and $D_{N}$ intersect
perpendiculary everywhere. Then $g$ is the metric tensor of

(i) a twisted product $M\times_{f}N$ if and only if $D_{M}$ is a
totally geodesic foliation and $D_{N}$ is a totally umbilic foliation,

(ii) a warped product $M\times_{f}N$ if and only if $D_{M}$ is a
totally geodesic foliation and $D_{N}$ is a spheric foliation, i.e., it is
umbilic and its mean curvature vector field is parallel. We note that
in this case, from \cite{P} we have
\begin{equation}
\nabla_XU=X(\ln f)U \label{e.q:4.1}
\end{equation}
for $X\in\Gamma(TM)$ and $U\in\Gamma(TN),$ where $\nabla$ is the Levi-Civita connection on $M\times N.$

(iii) a usual product of Riemannian manifolds if and only if $D_{M}$ and
$D_{N}$ are totally geodesic foliations.

Our first decomposition theorem for a conformal anti-invariant $\xi^\perp-$submersion comes
from Theorem \ref{teo2} and Theorem \ref{teo3} in terms of the second
fundamental forms of such submersions.

\begin{theorem}
Let $\pi:(M,\phi,\xi,\eta,g_{M})\longrightarrow (N,g_{N})$ is a conformal anti-invariant $\xi^\perp-$submersion, where $(M,\phi,\xi,\eta,g_{M})$ is a Sasakian manifold and $(N,g_{N})$ is a Riemannian manifold. Then $M$ is a locally product manifold if
\begin{align*}
-\frac{1}{\lambda^{2}}g_{N}(\nabla^{\pi}_{X}\pi_{*}\mathcal{C}Y,\pi_{*}\phi V)=g_{M}(A_{X}BY-\mathcal{C}Y(\ln\lambda)X+g_M(X,\mathcal{C}Y)\ln\lambda-\eta(Y)X,\phi V)
\end{align*}
and
\begin{align*}
-\frac{1}{\lambda^2}g_{N}(\nabla^{\pi}_{\phi W}\pi_{*}\phi V,\pi_{*}\phi\mathcal{C}X)&=g_{M}(\phi\mathcal{C}X(\ln\lambda)\phi V-T_{V}\mathcal{B}X, \phi V)+\eta(\nabla_{\phi W}V)\eta(\mathcal{C}X)
\end{align*}
for $X,Y\in\Gamma((ker\pi_{*})^\perp)$ and $U,V\in\Gamma(ker\pi_{*})$, where $M_{(\ker \pi_{\ast})^{\perp}}$ and $M_{(\ker \pi_{\ast})}$ are integral
manifolds of the distributions $(\ker \pi_{\ast})^{\perp}$ and $(\ker \pi_{\ast})$.
Conversely, if $M$ is a locally product manifold of the form
$M_{(\ker \pi_{\ast})^{\perp}}\times M_{(\ker \pi_{\ast})}$ then we have
\begin{align*}
\frac{1}{\lambda^{2}}g_{N}(\nabla^{\pi}_{X}\pi_{*}\mathcal{C}Y,\pi_{*}\phi V)=g_{M}(\mathcal{C}Y(\ln\lambda)X-g_M(X,\mathcal{C}Y)\ln\lambda+\eta(Y)X,\phi V)
\end{align*}
and
\begin{align*}
-\frac{1}{\lambda^2}g_{N}(\nabla^{\pi}_{\phi W}\pi_{*}\phi V,\pi_{*}\phi\mathcal{C}X)&=g_{M}(\phi\mathcal{C}X(\ln\lambda)\phi V,\phi V)+\eta(\nabla_{\phi W}V)\eta(\mathcal{C}X).
\end{align*}
\end{theorem}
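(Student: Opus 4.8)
The plan is to invoke the structural result from \cite{P} recalled just above the theorem: $M$ is a locally product manifold $M_{(\ker\pi_*)^\perp}\times M_{(\ker\pi_*)}$ if and only if both $(\ker\pi_*)^\perp$ and $\ker\pi_*$ define totally geodesic foliations on $M$. So the proof reduces entirely to recognizing the two displayed equations as the integrability-type conditions furnished by Theorem \ref{teo2} and Theorem \ref{teo3}.

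First I would recall from Theorem \ref{teo2} that $(\ker\pi_*)^\perp$ defines a totally geodesic foliation on $M$ if and only if
\begin{equation*}
-\frac{1}{\lambda^{2}}g_{N}(\nabla^{\pi}_{X}\pi_{*}\mathcal{C}Y,\pi_{*}\phi V)=g_{M}(A_{X}BY-\mathcal{C}Y(\ln\lambda)X+g_M(X,\mathcal{C}Y)\ln\lambda-\eta(Y)X,\phi V)
\end{equation*}
for all $X,Y\in\Gamma((\ker\pi_*)^\perp)$ and $V\in\Gamma(\ker\pi_*)$, which is precisely the first hypothesis. Then I would recall from Theorem \ref{teo3} that $\ker\pi_*$ defines a totally geodesic foliation on $M$ if and only if
\begin{equation*}
-\frac{1}{\lambda^2}g_{N}(\nabla^{\pi}_{\phi W}\pi_{*}\phi V,\pi_{*}\phi\mathcal{C}X)=g_{M}(\phi\mathcal{C}X(\ln\lambda)\phi V-T_{V}\mathcal{B}X, \phi V)+\eta(\nabla_{\phi W}V)\eta(\mathcal{C}X)
\end{equation*}
for all $V,W\in\Gamma(\ker\pi_*)$ and $X\in\Gamma((\ker\pi_*)^\perp)$, which is the second hypothesis. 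With both foliations totally geodesic, case (iii) of the decomposition dichotomy from \cite{P} applies, so $M$ is a locally product manifold of the form $M_{(\ker\pi_*)^\perp}\times M_{(\ker\pi_*)}$, proving the forward direction.

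For the converse, I would argue the other way: if $M = M_{(\ker\pi_*)^\perp}\times M_{(\ker\pi_*)}$, then in particular both canonical foliations are totally geodesic, hence $g_M(\nabla_X Y,V)=0$ and $g_M(\nabla_U V,X)=0$ for the relevant vector fields. Feeding $g_M(\nabla_X Y,V)=0$ into the identity established inside the proof of Theorem \ref{teo2}, namely
\begin{equation*}
g_{M}(\nabla_{X}Y,V)=g_{M}(A_{X}BY-\mathcal{C}Y(\ln\lambda)X+g_M(X,\mathcal{C}Y)\ln\lambda-\eta(Y)X,\phi V)+\frac{1}{\lambda^2}g_{N}(\nabla^{\pi}_X\pi_{*}\mathcal{C}Y,\pi_{*}\phi V),
\end{equation*}
and rearranging yields the first conclusion. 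Similarly, substituting $g_M(\nabla_U V,X)=0$ into the corresponding identity from the proof of Theorem \ref{teo3} and then using the hypothesis from that theorem's corollary-level observation that $\lambda$ being constant on $\mu$ kills the $T_V\mathcal{B}X$ term — more directly, since the left side vanishes, transposing the terms gives
\begin{equation*}
-\frac{1}{\lambda^2}g_{N}(\nabla^{\pi}_{\phi W}\pi_{*}\phi V,\pi_{*}\phi\mathcal{C}X)=g_{M}(\phi\mathcal{C}X(\ln\lambda)\phi V,\phi V)+\eta(\nabla_{\phi W}V)\eta(\mathcal{C}X),
\end{equation*}
which is the second conclusion. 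The only subtlety — and the main thing to be careful about — is bookkeeping: making sure the $T_V\mathcal{B}X$ term is correctly absorbed or cancelled in the converse direction (it appears the intended reading is that totally geodesic fibres force $T\equiv 0$, so $T_V\mathcal{B}X=0$, which is what removes that summand), and confirming that the sign conventions in $\mathcal{C}Y(\ln\lambda)X$ versus $-\mathcal{C}Y(\ln\lambda)X$ match between the two directions. Everything else is a direct citation of the two earlier theorems plus the product-manifold criterion from \cite{P}, so no genuinely hard step remains.
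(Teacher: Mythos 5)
Your proposal matches the paper's (essentially unwritten) argument exactly: the theorem is stated without a proof environment and is introduced as ``com[ing] from Theorem \ref{teo2} and Theorem \ref{teo3}'', combined with criterion (iii) of \cite{P}, which is precisely your reduction. The one point to tighten is the converse: the first identity loses its $g_{M}(A_{X}\mathcal{B}Y,\phi V)$ term for the same reason the second loses $T_{V}\mathcal{B}X$ --- in a local Riemannian product both distributions are parallel, so $A_{X}\mathcal{B}Y=\mathcal{H}\nabla_{X}\mathcal{B}Y=0$ as well as $T\equiv 0$ --- hence ``rearranging'' alone does not give the stated conclusion; you flagged this for $T$ but not for $A$.
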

From Corollary \ref{cor1} and Corollary \ref{cor2}, we have the following theorem.
\begin{theorem}
Let $\pi$ be a conformal anti-invariant $\xi^\perp-$submersion from a Sasakian manifold $(M,\phi,\xi,\eta,g_{M})$ onto a Riemannian manifold $(N,g_{N})$ with
$(ker\pi_{*})^{\perp}=\phi(ker\pi_{*})\oplus<\xi>$. Then $M$ is a locally product manifold if $A_{X}\mathcal{B}Y=\eta(Y)X$ and $T_{V}\phi W=0$ for $X,Y\in\Gamma((ker\pi_{*})^\perp)$ and $V,W\in\Gamma(ker\pi_{*})$.
\end{theorem}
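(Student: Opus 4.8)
The plan is to invoke the two corollaries directly and translate the conditions on the O'Neill tensors into statements about totally geodesic foliations, then appeal to the classification result (iii) of \cite{P} recalled at the beginning of this section. More precisely, under the hypothesis $(\ker\pi_{*})^{\perp}=\phi(\ker\pi_{*})\oplus\langle\xi\rangle$, Corollary \ref{cor1} tells us that $(\ker\pi_{*})^{\perp}$ defines a totally geodesic foliation on $M$ if and only if $A_{X}\mathcal{B}Y=\eta(Y)X$ for all $X,Y\in\Gamma((\ker\pi_{*})^{\perp})$, while Corollary \ref{cor2} tells us that $\ker\pi_{*}$ defines a totally geodesic foliation on $M$ if and only if $T_{V}\phi W=0$ for all $V,W\in\Gamma(\ker\pi_{*})$.

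First I would assume the two stated conditions $A_{X}\mathcal{B}Y=\eta(Y)X$ and $T_{V}\phi W=0$. Applying Corollary \ref{cor1} with the first condition, the horizontal distribution $(\ker\pi_{*})^{\perp}$ is integrable and its leaves are totally geodesic in $M$; equivalently, the integral manifold $M_{(\ker\pi_{*})^{\perp}}$ is a totally geodesic submanifold. Applying Corollary \ref{cor2} with the second condition, the vertical distribution $\ker\pi_{*}$ also defines a totally geodesic foliation, so its integral manifold $M_{(\ker\pi_{*})}$ is totally geodesic as well. Since $(\ker\pi_{*})^{\perp}$ and $\ker\pi_{*}$ are mutually orthogonal complementary distributions whose leaves are both totally geodesic, case (iii) of the result quoted from \cite{P} applies (with $D_{M}=(\ker\pi_{*})^{\perp}$ and $D_{N}=\ker\pi_{*}$, both totally geodesic), and therefore $M$ is locally a usual Riemannian product $M_{(\ker\pi_{*})^{\perp}}\times M_{(\ker\pi_{*})}$.

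I expect no serious obstacle here: the real content was already established in Theorems \ref{teo2} and \ref{teo3} and their corollaries, and what remains is purely bookkeeping — checking that the hypotheses of the \cite{P} dichotomy are met and that nothing more than total geodesy of both foliations is needed. The one point worth a sentence of care is the integrability of $(\ker\pi_{*})^{\perp}$: a priori ``totally geodesic foliation'' presupposes integrability, and indeed Corollary \ref{cor1} is stated for an integrable horizontal distribution, so I would note that under $(\ker\pi_{*})^{\perp}=\phi(\ker\pi_{*})\oplus\langle\xi\rangle$ the condition $A_{X}\mathcal{B}Y=\eta(Y)X$ simultaneously gives integrability (via the equivalence in Corollary \ref{cor1} together with the remark that $\mathcal{C}X=0$ in this setting) and total geodesy, while $\ker\pi_{*}$ is always integrable as the kernel of $\pi_{*}$. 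With both complementary foliations totally geodesic, the local product decomposition follows.
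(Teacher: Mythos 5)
Your proposal is correct and follows exactly the paper's own route: the paper deduces this theorem directly from Corollaries \ref{cor1} and \ref{cor2} (which convert the conditions $A_{X}\mathcal{B}Y=\eta(Y)X$ and $T_{V}\phi W=0$ into total geodesy of the horizontal and vertical foliations, respectively) combined with case (iii) of the Ponge--Reckziegel decomposition quoted at the start of the section. Your additional remark on the integrability of $(\ker\pi_{*})^{\perp}$ is a sensible point of care but does not change the argument.
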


Next we obtain a decomposition theorem which is related to the
notion of twisted product mani-fold. But we first recall the adjoint
map of a map. Let $\pi:(M_1,g_1)\rightarrow(M_2,g_2)$ be a map between
Riemannian manifolds $(M_1,g_1)$ and $(M_2,g_2).$ Then the adjoint
map $^{*}\pi_*$ of $\pi_*$ is characterized by
$g_1(x,^{*}\pi_{*p_1}y)=g_2(\pi_{*p_1}x,y)$ for $x\in T_{p_1}M_1,\, y\in
T_{\pi(p_1)}M_2$ and $p_1\in M_1.$ Considering $\pi^{h}_*$ at each
$p_1\in M_1$ as a linear transformation
$$\pi^{h}_{*p_1}:((\ker \pi_*)^\perp(p_1),g_{1_{p_1((\ker \pi_*)^\perp(p_1))}})\rightarrow (range \pi_*(p_2),g_{2_{p_2((range \pi_*)(p_2))}}),$$
we will denote the adjoint of $\pi^{h}_{*}$ by $^{*}\pi^{h}_{*p_1}.$ Let $^{*}\pi_{*p_1}$ be adjoint of
$\pi_{*p_1}:(T_{p_1}M_1,g_{1_{p_1}})\rightarrow (T_{p_2}M_2,g_{2_{p_2}}).$ Then the linear transformation
$$(^*\pi_{*p_1})^h:range \pi_*(p_2)\rightarrow (\ker \pi_*)^\perp(p_1)$$ defined by $(^{*}\pi_{*p_1})^{h}y= ^{*}\pi_{*p_1}y,$ where
$y\in\Gamma(range\pi_{*p_1}), p_2=\pi(p_1),$ is an isomorphism and $(\pi^{h}_{*p_1})^{-1}=(^{*}\pi_{*p_1})^h= ^{*}\pi^{h}_{*p_1}.$
\begin{theorem}
Let $\pi:(M,\phi,\xi,\eta,g_{M})\longrightarrow (N,g_{N})$ is a conformal anti-invariant $\xi^\perp-$submersion, where $(M,\phi,\xi,\eta,g_{M})$ is a Sasakian manifold and $(N,g_{N})$ is a Riemannian manifold. Then $M$ is a twisted product manifold of the from $M_{(ker\pi_{*})}\times_{\lambda} M_{(ker\pi_{*})^\perp}$ if and only if
\begin{align}
-\frac{1}{\lambda^2}g_{N}(\nabla^{\pi}_{\phi W}\pi_{*}\phi V,\pi_{*}\phi\mathcal{C}X)&=g_{M}(\phi\mathcal{C}X(\ln\lambda)\phi V-T_{V}\mathcal{B}X, \phi V)+\eta(\nabla_{\phi W}V)\eta(\mathcal{C}X)\label{e.q:4.2}
\end{align}
and
\begin{align}
g_{M}(X,Y)H&=-\mathcal{B}A_{X}\mathcal{B}Y+\mathcal{C}Y(\ln\lambda)\mathcal{B}X-\mathcal{B}\mathcal{H}(grad\ln\lambda)g_{M}(X,\mathcal{C}Y)\notag\\
&-\phi ^{*}\pi_*(\nabla^{\pi}_X\pi_{*}CY)+\eta(Y)\mathcal{B}X \label{e.q:4.3}
\end{align}
for $X,Y\in\Gamma((\ker \pi_{*})^{\perp})$ and $V,W\in\Gamma(\ker \pi_{*}),$
where $M_{(\ker \pi_{*})^{\perp}}$ and $M_{(\ker \pi_{*})}$ are integral
manifolds of the distributions $(\ker \pi_{*})^{\perp}$ and $(\ker \pi_{*})$
and $H$ is the mean curvature vector field of $M_{(\ker \pi_{*})^{\perp}}.$
\end{theorem}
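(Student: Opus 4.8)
The plan is to deduce the statement from the Ponge--Reckziegel description of twisted products recalled at the beginning of this section. Since the foliations determined by $\ker\pi_*$ and $(\ker\pi_*)^\perp$ are automatically complementary and orthogonal, \cite{P} tells us that $M$ is (locally) a twisted product $M_{(\ker\pi_*)}\times_\lambda M_{(\ker\pi_*)^\perp}$ precisely when the leaves tangent to $\ker\pi_*$ form a totally geodesic foliation and the leaves tangent to $(\ker\pi_*)^\perp$ form a totally umbilic one, the twisting function being the dilation $\lambda$. Thus everything reduces to recasting these two foliation conditions as (\ref{e.q:4.2}) and (\ref{e.q:4.3}), respectively.

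The first is free: (\ref{e.q:4.2}) is verbatim condition (ii) of Theorem \ref{teo3}, hence equivalent to $\ker\pi_*$ defining a totally geodesic foliation on $M$. For the second, I would compute $\mathcal{V}\nabla_X Y$ for arbitrary $X,Y\in\Gamma((\ker\pi_*)^\perp)$, since $(\ker\pi_*)^\perp$ is totally umbilic with mean curvature vector field $H$ exactly when $\mathcal{V}\nabla_X Y=g_M(X,Y)H$. Beginning from the Sasakian relation $\phi\nabla_X Y=\nabla_X\phi Y+g_M(X,Y)\xi-\eta(Y)X$ provided by (\ref{e.q:2.2}) and (\ref{e.q:2.3}), splitting $\phi Y=\mathcal{B}Y+\mathcal{C}Y$, expanding $\nabla_X\mathcal{B}Y$ via (\ref{nxv}) and $\nabla_X\mathcal{C}Y$ via (\ref{nxy}), then applying $\phi$ once more and using $\phi^2=-I+\eta\otimes\xi$, $\phi\xi=0$ and the fact that $\xi$ is horizontal, the purely vertical terms $\mathcal{V}\nabla_X\mathcal{B}Y$ and $A_X\mathcal{C}Y$ are killed (their $\phi$-images lie in $\phi(\ker\pi_*)$, hence are horizontal) and one is left with
\[
\mathcal{V}\nabla_X Y=-\mathcal{B}A_X\mathcal{B}Y+\eta(Y)\mathcal{B}X-\mathcal{B}\mathcal{H}\nabla_X\mathcal{C}Y.
\]
It then remains to rewrite $\mathcal{H}\nabla_X\mathcal{C}Y$: applying Lemma \ref{lem1}(i) to the horizontal pair $(X,\mathcal{C}Y)$ expresses $\pi_*(\mathcal{H}\nabla_X\mathcal{C}Y)$ in terms of $\nabla^\pi_X\pi_*\mathcal{C}Y$ and the gradient of $\ln\lambda$, and composing with the adjoint $^{*}\pi_*$ introduced just before the statement sends this back to a horizontal field on $M$; feeding the result in, and using that $\mu$ is $\phi$-invariant (so $\mathcal{B}\mathcal{C}Y=0$), the displayed identity acquires exactly the form of the right-hand side of (\ref{e.q:4.3}). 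Hence $(\ker\pi_*)^\perp$ is totally umbilic with the stated $H$ if and only if (\ref{e.q:4.3}) holds, and combining the two equivalences with \cite{P} proves the theorem.

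I expect the genuinely non-routine step to be the treatment of the term $\mathcal{H}\nabla_X\mathcal{C}Y$: one must carry the conformal dilation correctly through Lemma \ref{lem1} and the adjoint map, and check that the $\mu$-components cancel so that what survives is a pure multiple of $g_M(X,Y)$ --- it is precisely this cancellation that identifies $\lambda$ as the twisting function. Everything else (the $\mathcal{B}$/$\mathcal{C}$ splitting, the handling of the O'Neill tensors $A$ and $T$, and the Sasakian structure equations) is bookkeeping of the same kind already carried out in Theorems \ref{teo1}--\ref{teo3}.
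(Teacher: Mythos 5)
Your proposal is correct and follows the same overall strategy as the paper: invoke the Ponge--Reckziegel characterization of twisted products, identify condition (\ref{e.q:4.2}) with the totally geodesic condition for $\ker\pi_*$ via the computation of Theorem \ref{teo3} (the paper simply repeats that computation verbatim), and identify condition (\ref{e.q:4.3}) with total umbilicity of $(\ker\pi_*)^{\perp}$. The one place where you genuinely diverge is the umbilicity half: the paper recycles the scalar computation of Theorem \ref{teo2}, ending with an identity for $g_{M}(\nabla_{X}Y,V)$ and then asserting without further argument that this is equivalent to the vector equation (\ref{e.q:4.3}); you instead compute $\mathcal{V}\nabla_{X}Y$ directly at the vector level, arriving at $\mathcal{V}\nabla_{X}Y=-\mathcal{B}A_{X}\mathcal{B}Y+\eta(Y)\mathcal{B}X-\mathcal{B}\mathcal{H}\nabla_{X}\mathcal{C}Y$ and then converting $\mathcal{H}\nabla_{X}\mathcal{C}Y$ via Lemma \ref{lem1}(i) and the adjoint map. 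This is the step the paper leaves implicit, and your version actually produces the term $\phi\,^{*}\pi_{*}(\nabla^{\pi}_{X}\pi_{*}\mathcal{C}Y)$ and the $\mathcal{B}\mathcal{H}(grad\ln\lambda)\,g_{M}(X,\mathcal{C}Y)$ term appearing in (\ref{e.q:4.3}), so it is the more complete argument. The only caution is the normalization of the adjoint: with the definition $g_{M}(x,{}^{*}\pi_{*}y)=g_{N}(\pi_{*}x,y)$ one has $^{*}\pi_{*}\pi_{*}Z=\lambda^{2}Z$ on horizontal vectors, so a factor of $\lambda^{-2}$ must be tracked when passing from $\pi_{*}(\mathcal{H}\nabla_{X}\mathcal{C}Y)$ back to $\mathcal{H}\nabla_{X}\mathcal{C}Y$; the paper glosses over this by treating $^{*}\pi^{h}_{*}$ as the inverse of $\pi^{h}_{*}$, and you should be explicit about which convention you use so that (\ref{e.q:4.3}) comes out with the stated coefficients.
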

\begin{proof}
Since $g_{M}(W,\xi)=0,$ using (\ref{e.q:2.3}) we have $g_{M}(\nabla_{V}W,\xi)=-g_{M}(W,\nabla_{V}\xi)=-g_{M}(W,\phi V)=0$
for $V,W\in\Gamma(ker\pi_{*})$ and $\xi\in\Gamma((ker\pi_{*})^\perp).$  Thus we have
\begin{align*}
g_{M}(\nabla_{V}W,X)&=g_{M}(\phi\nabla_{V}W,\phi X)+\eta(\nabla_{V}W)\eta(X)\\
&=g_{M}(\phi\nabla_{V}\phi W,\phi X)\\
&=g_{M}(\nabla_{V}\phi W,\phi X)-g_{M}((\nabla_{V}\phi)W,\phi X).
\end{align*}
Using (\ref{e.q:2.3}), (\ref{nvw}) and (\ref{e.q:3.2}) we have
\begin{equation*}
g_{M}(\nabla_{V}W,X)=g_{M}(T_{V}\phi W,\mathcal{B}X)+g_{M}(\mathcal{H}\nabla_{V}\phi W,\mathcal{C}X).
\end{equation*}
Since $\nabla$ is torsion free and $[V,\phi W]\in\Gamma(ker\pi_{*})$ we obtain
\begin{equation*}
g_{M}(\nabla_{V}W,X)=g_{M}(T_{V}\phi W,\mathcal{B}X)+g_{M}(\nabla_{\phi W}V,\mathcal{C}X).
\end{equation*}
Using (\ref{e.q:2.3}) and (\ref{nxy}) we have
\begin{align*}
g_{M}(\nabla_{V}W,X)&=g_{M}(T_{V}\phi W,\mathcal{B}X)+g_{M}(\phi\nabla_{\phi W}V,\phi\mathcal{C}X)+\eta(\nabla_{\phi W}V)\eta(\mathcal{C}X)\\
&=g_{M}(T_{V}\phi W,\mathcal{B}X)+g_{M}(\nabla_{\phi W}\phi V,\phi\mathcal{C}X)+\eta(\nabla_{\phi W}V)\eta(\mathcal{C}X)
\end{align*}
here we have used that $\mu$ is invariant. Using (\ref{nfixy}) and Lemma \ref{lem1} (i) and if we take into account
that $\pi$ is a conformal submersion, we obtain
\begin{align*}
g_{M}(\nabla_{U}V,X)&=g_{M}(T_{V}\phi W,\mathcal{B}X)+\frac{1}{\lambda^2}g_{M}(\mathcal{H}gradln\lambda,\phi W)g_{N}(\pi_{*}\phi V,\pi_{*}\phi\mathcal{C}X)\\
&-\frac{1}{\lambda^2}g_{M}(\mathcal{H}gradln\lambda,\phi V)g_{N}(\pi_{*}\phi W,\pi_{*}\phi\mathcal{C}X)\\
&+g_{M}(\phi W,\phi V)\frac{1}{\lambda^2}g_{N}(\pi_{*}(\mathcal{H}gradln\lambda),\pi_{*}\phi\mathcal{C}X)\\
&+\frac{1}{\lambda^2}g_{N}(\nabla_{\pi_{*}\phi W}\pi_{*}\phi V,\pi_{*}\phi\mathcal{C}X)+\eta(\nabla_{\phi W}V)\eta(\mathcal{C}X).
\end{align*}
Moreover, using Definition \ref{def} and (\ref{cywv}), we obtain
\begin{align*}
g_{M}(\nabla_{U}V,X)&=g_{M}(\phi\mathcal{C}X(\ln\lambda)\phi V-T_{V}\mathcal{B}X, \phi V)+\eta(\nabla_{\phi W}V)\eta(\mathcal{C}X)\\
&+\frac{1}{\lambda^2}g_{N}(\nabla^{\pi}_{\phi W}\pi_{*}\phi V,\pi_{*}\phi\mathcal{C}X).
\end{align*}
Thus it follows that $M_{(ker\pi_{*})}$ is totally geodesic if and only if the equation (\ref{e.q:4.2}) is satisfied. On the other hand, for $X,Y\in\Gamma((ker\pi_{*})^\perp)$ and $V\in\Gamma(ker\pi_{*})$, by using (\ref{e.q:2.2}), (\ref{nxv}), (\ref{nxy}), (\ref{e.q:3.1}), (\ref{e.q:3.2}) and (\ref{e.q:3.8}), have
\begin{align*}
g_{M}(\nabla_{X}Y,V)&=g_{M}(A_{X}BY,\phi V)+g_{M}(\nabla_{X}\mathcal{C}Y,\phi V)-\eta(Y)g_{M}(X,\phi V).
\end{align*}
Since $\pi$ is a conformal submersion, using (\ref{nfixy}) and Lemma (\ref{lem1}) we arrive at
\begin{align*}
g_{M}(\nabla_{X}Y,V)&=g_{M}(A_{X}BY,\phi V)-\frac{1}{\lambda^2}g_{M}(\mathcal{H}gradln\lambda,X)g_{N}(\pi_{*}\mathcal{C}Y,\pi_{*}\phi V)\\
&-\frac{1}{\lambda^2}g_{M}(\mathcal{H}gradln\lambda,\mathcal{C}Y)g_{N}(\pi_{*}X,\pi_{*}\phi V)\\
&+\frac{1}{\lambda^2}g_{M}(X,\mathcal{C}Y)g_{N}(\pi_{*}(\mathcal{H}gradln\lambda,\mathcal{C}Y),\pi_{*}\phi V)\\
&+\frac{1}{\lambda^2}g_{N}(\nabla_{\pi_{*}X}\pi_{*}\mathcal{C}Y,\pi_{*}\phi V)-\eta(Y)g_{M}(X,\phi V).
\end{align*}
Moreover, using Definiton \ref{def} and (\ref{cywv}) we obtain
\begin{align*}
g_{M}(\nabla_{X}Y,V)&=g_{M}(A_{X}BY-\mathcal{C}Y(\ln\lambda)X+g_M(X,\mathcal{C}Y)\ln\lambda-\eta(Y)X,\phi V)\\
&+\frac{1}{\lambda^2}g_{N}(\nabla_{\pi_{*}X}\pi_{*}\mathcal{C}Y,\pi_{*}\phi V).
\end{align*}
From above equation, $M_{(ker\pi_{*})^\perp}$ is totally umbilical if and only if the equation (\ref{e.q:4.3}) is satisfied.
\end{proof}
However, in the sequel, we show that the notion of conformal anti-invariant $\xi^\perp-$submersion puts some restrictions on the total space for locally warped product manifold.
\begin{theorem}
Let $\pi$ is a conformal anti-invariant $\xi^\perp-$submersion from a Sasakian manifold $(M,\phi,\xi,\eta,g_{M})$ to a Riemannian manifold $(N,g_{N})$. If $M$ is a locally warped product manifold of the from $M_{(ker\pi_{*})^\perp}\times_{\lambda} M_{(ker\pi_{*})}$, then either $\pi$ is a horizontally homothetic submersion or the fibres are one dimensional.
\end{theorem}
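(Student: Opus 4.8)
The plan is to exploit the warped‑product identity $(\ref{e.q:4.1})$, which in the present situation reads
\begin{equation*}
\nabla_{X}V=X(\ln\lambda)V\qquad\text{for }X\in\Gamma((\ker\pi_{*})^{\perp}),\ V\in\Gamma(\ker\pi_{*}),
\end{equation*}
together with the facts that, since the second factor is spheric, $T_{V}W=-g_{M}(V,W)\,\mathcal{H}(grad\ln\lambda)$ for $V,W\in\Gamma(\ker\pi_{*})$, and that the warping function $\lambda$ is constant along $\ker\pi_{*}$, so that $grad\ln\lambda=\mathcal{H}(grad\ln\lambda)$. First I would dispose of the $\xi$–direction: pairing the umbilicity relation with $\xi$ and using $\nabla_{V}\xi=\phi V$ from $(\ref{e.q:2.3})$ together with $(\ref{nvx})$ gives $g_{M}(V,W)\,\xi(\ln\lambda)=g_{M}(T_{V}W,\xi)=g_{M}(\nabla_{V}W,\xi)=-g_{M}(W,\phi V)=0$, hence $\xi(\ln\lambda)=0$ and so $\mathcal{H}(grad\ln\lambda)$ is orthogonal to $\xi$.

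The heart of the argument is to compute $\nabla_{\phi W}V$ in two ways for $V,W\in\Gamma(\ker\pi_{*})$, noting that $\phi W$ is horizontal by Definition $\ref{def}$. On one hand, the displayed formula gives $\nabla_{\phi W}V=(\phi W)(\ln\lambda)V$. On the other hand, I would expand $\nabla_{V}(\phi W)$ (which equals $\nabla_{\phi W}V$ up to the bracket $[V,\phi W]\in\Gamma(\ker\pi_{*})$) by means of $(\nabla_{V}\phi)W=-g_{M}(V,W)\xi$ from $(\ref{e.q:2.3})$, the decomposition $\phi(\mathcal{H}grad\ln\lambda)=\mathcal{B}(\mathcal{H}grad\ln\lambda)+\mathcal{C}(\mathcal{H}grad\ln\lambda)$ coming from $(\ref{e.q:3.1})$--$(\ref{e.q:3.2})$, the splittings $(\ref{nvx})$ and $(\ref{nxy})$ of the connection into O'Neill tensors, and Lemma $\ref{lem1}$ to carry the surviving horizontal derivative through $\pi_{*}$. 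Comparing the vertical and the $\phi(\ker\pi_{*})$–components of the two expressions and using $(\ref{cywv})$ to cancel the spurious terms, the identity should collapse to
\begin{equation*}
g_{M}(\mathcal{H}grad\ln\lambda,\phi W)\,g_{M}(V,U)=g_{M}(V,W)\,g_{M}(\mathcal{H}grad\ln\lambda,\phi U)\qquad(U,V,W\in\Gamma(\ker\pi_{*})).
\end{equation*}

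To finish: if $\dim(\ker\pi_{*})\geq 2$, then for any nonzero $W\in\Gamma(\ker\pi_{*})$ one picks a unit $V\in\Gamma(\ker\pi_{*})$ with $g_{M}(V,W)=0$, whence the displayed identity forces $g_{M}(\mathcal{H}grad\ln\lambda,\phi W)=0$; as $W$ is arbitrary, $\mathcal{H}grad\ln\lambda$ is orthogonal to $\phi(\ker\pi_{*})$, and combined with its orthogonality to $\xi$ and a symmetric computation on the invariant part $\mu\ominus\langle\xi\rangle$ of $\mu$ (where $\phi$ is an isomorphism, so the same scheme applies) one gets $\mathcal{H}grad\ln\lambda=0$ on all of $(\ker\pi_{*})^{\perp}$, i.e. $\pi$ is horizontally homothetic. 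If instead $\dim(\ker\pi_{*})=1$, the fibres are one‑dimensional and there is nothing to prove. The step I expect to be the real obstacle is the second computation of $\nabla_{\phi W}V$: one must keep precise track of the three summands of $(\ker\pi_{*})^{\perp}=\phi(\ker\pi_{*})\oplus\mu$ with $\xi\in\mu$, and show that the contact correction terms produced by $(\nabla_{X}\phi)Y$ and by $\nabla_{X}\xi=\phi X$ either cancel or combine exactly into the displayed identity. It is precisely the rigidity these contact terms impose (for instance $g_{M}(V,V)=g_{M}(\nabla_{\phi V}V,\xi)$ forced to vanish, since $\nabla_{\phi V}V$ is vertical while $\nabla_{\phi V}\xi=\phi^{2}V=-V$) that produces the stated dichotomy.
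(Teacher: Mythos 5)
Your proposal follows essentially the same route as the paper's proof: the warped-product identity (\ref{e.q:4.1}) combined with the Sasakian relation $(\nabla_V\phi)W=-g_M(V,W)\xi$ produces exactly the paper's key identity (\ref{e.q:4.4}) (your three-field identity $\phi W(\ln\lambda)g_M(V,U)=g_M(V,W)\,\phi U(\ln\lambda)$ reduces to it on setting $U=V$), the $\mu$-part is handled by the same vanishing $g_M(V,\phi X)=0$, and your choice of an orthogonal pair $V\perp W$ is simply a cleaner way of running the paper's Cauchy--Schwarz step (\ref{e.q:4.6}). One remark: your closing parenthetical, taken at face value, shows $g_M(V,V)=g_M(\nabla_{\phi V}V,\xi)=0$ for \emph{every} vertical $V$, i.e.\ that the warped-product hypothesis is actually vacuous on a Sasakian total space with positive-dimensional fibres --- a stronger conclusion than the stated dichotomy, not merely ``rigidity that produces'' it, so you should either develop that into a nonexistence statement or drop the aside.
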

\begin{proof}
For $X\in\Gamma((ker\pi_{*})^\perp)$ and $V,W\in\Gamma(ker\pi_{*})$, $g_{M}(W,\xi)=0$ implies that from (\ref{e.q:2.3}), $g_{M}(\nabla_{V}W,\xi)=-g_{M}(W,\nabla_{V}\xi)=-g_{M}(W,\phi V)=0.$ Thus we have
\begin{align*}
g_{M}(\nabla_{V}W,X)&=g_{M}(\phi\nabla_{V}W,\phi X)+\eta(\nabla_{V}W)\eta(X)\\
&=g_{M}(\phi\nabla_{V}\phi W,\phi X)\\
&=g_{M}(\nabla_{V}\phi W,\phi X)-g_{M}((\nabla_{V}\phi)W,\phi X).
\end{align*}
Using (\ref{e.q:2.3}) and (\ref{e.q:4.1}), we get
\begin{align*}
g_{M}(\nabla_{V}W,X)&=g_{M}(\nabla_{V}\phi W,\phi X)-g_{M}(-g_{M}(V,W)\xi-\eta(W)V,\phi X)\\
-X(\ln\lambda)g_{M}(V,W)&=\phi W(\ln\lambda)g_{M}(V,\phi X).
\end{align*}
For $X\in\Gamma(\mu),$ we derive
\begin{equation*}
-X(\ln\lambda)g_{M}(V,W)=0.
\end{equation*}
From above equation, we conclude that $\lambda$ is a constant on $\Gamma(\mu).$ For $X=\phi V\in\Gamma(\phi ker\pi_{*})$ we obtain
\begin{equation}
\phi V(\ln\lambda)g_{M}(V,W)=\phi W(\ln\lambda)g_{M}(V,V).\label{e.q:4.4}
\end{equation}
Interchanging the roles of $W$ and $V$ in (\ref{e.q:4.4}) we arrive at
\begin{equation}
\phi V(\ln\lambda)g_{M}(V,W)=\phi V(\ln\lambda)g_{M}(W,W).\label{e.q:4.5}
\end{equation}
From (\ref{e.q:4.4}) and (\ref{e.q:4.5}) we get
\begin{equation}
\phi V(\ln\lambda)=\phi V(\ln\lambda)\frac{g_{M}(V,W)^{2}}{\parallel V \parallel^{2}\parallel W \parallel^{2} }.\label{e.q:4.6}
\end{equation}
From (\ref{e.q:4.6}), either $\lambda$ is a constant on $\Gamma(\phi ker\pi_{*})$ or $\Gamma(\phi ker\pi_{*})$ is 1-dimensional. Thus proof is complete.
\end{proof}

\end{document}